\documentclass[a4paper,10pt,hidelinks]{article}
\usepackage{graphicx} 
\usepackage[a4paper, margin=2 cm]{geometry}
\usepackage{hyperref}
\usepackage{amsthm}
\usepackage{thm-restate}
\usepackage{geometry}
\usepackage{amssymb}
\usepackage{amsmath}
\usepackage{cleveref}
\usepackage{soul}
\usepackage{comment}
\usepackage{setspace}
\usepackage{xcolor}
\usepackage[shortlabels]{enumitem}
\usepackage{listings}
\usepackage{blkarray}
\usepackage{lmodern}
\usepackage[english]{babel}
\usepackage{float}
\usepackage{tikz}
\usetikzlibrary{arrows.meta,arrows}
\usetikzlibrary{arrows,decorations.markings}
\usetikzlibrary{decorations.pathmorphing}
\usepackage[square,sort,comma,numbers]{natbib}
\newtheorem{theorem}{Theorem}[section]
\newtheorem{lemma}[theorem]{Lemma}
\newtheorem{corollary}[theorem]{Corollary}

\newtheorem{claim}[theorem]{Claim}
\newtheorem*{claim*}{Claim}
\newtheorem{proposition}[theorem]{Proposition}
\newtheorem{conjecture}[theorem]{Conjecture}
\newtheorem{remark}[theorem]{Remark}

\theoremstyle{definition}
\newtheorem{definition}[theorem]{Definition}
\newtheorem*{definition*}{Definition}

\newcommand{\eps}{\varepsilon}

\newcommand{\AM}[1]{\textcolor{red}{\textsf{#1}}}

\newcounter{propcounter}

\title{The Lov\'asz conjecture holds for moderately dense Cayley graphs}
\author{Benjamin Bedert \thanks{University of Cambridge, UK. The author gratefully acknowledges financial support from the European Research Council (ERC) Starting Grant “High Dimensional Probability and Combinatorics”, grant No. 101165900. \emph{Email:} \textbf{bb741@cam.ac.uk} {\nolinkurl{bb741@cam.ac.uk}}.}\and Nemanja Dragani\'c\thanks{Mathematical Institute, University of Oxford. \emph{Email:} \textbf{nemanja.draganic@maths.ox.ac.uk} } \and Alp M\"uyesser\thanks{New College, University of Oxford. \emph{E-mail:} \textbf{alp.muyesser@new.ox.ac.uk}.} \and Mat\'ias Pavez-Sign\'e\thanks{Department of Mathematical Engineering and Center for Mathematical Modeling  (CNRS IRL2807), University of Chile. Supported by ANID Fondecyt Regular grant No. 1241398 and ANID Basal Grant CMM FB210005.  \emph{E-mail:} \textbf{\ mpavez@dim.uchile.cl}.}}

\date{}

\begin{document}

\maketitle

\begin{abstract}
We show that there is an absolute constant $c>0$ such that every large connected $n$-vertex Cayley graph with degree $d\ge n^{1-c}$ has a Hamilton cycle. This makes progress towards the Lov\'asz conjecture and improves upon the previous best result of this form due to Christofides, Hladk\'y, and M\'ath\'e from 2014 concerning graphs with $d\geq \eps n$. Our proof avoids the use of Szemer\'edi's regularity lemma and relies instead on an efficient arithmetic regularity lemma specialised to Cayley graphs. 


\end{abstract}
\section{Introduction}
The Hamilton cycle problem is one of the most fundamental and widely studied questions in computer science and combinatorics. It was one of Karp's original 21 NP-complete problems (see~\cite{Karp1972}) and determining whether a general graph contains a Hamilton cycle remains a central challenge in the theory of algorithms and complexity. While the problem is computationally intractable in the worst case, a guiding philosophy in the field is that sufficient structure should render the problem solvable. In particular, it is a classical intuition that high symmetry might be enough to force the existence of a Hamilton cycle.

This intuition is formalized in a celebrated conjecture of Lovász~\cite{lovaszhamcycle} from 1969, which asserts that every connected vertex-transitive graph has a Hamilton cycle. An even older version of this problem, posed by Rapaport-Strasser~\cite{rapapport} in 1959, focuses on Cayley graphs. Given a group $G$ and a subset $S\subset G$, the \textit{Cayley graph} $\mathrm{Cay}_G(S)$ is the graph whose vertices are the elements of $G$ and all the edges are of the form $\{g,gs\}$ with $g\in G$ and $s\in S$. We will assume throughout the paper that $G$ is finite and $S$ is symmetric (i.e. $s^{-1}\in S$ whenever $s\in S$), in which case $\mathrm{Cay}_G(S)$ is indeed a graph rather than a directed graph.

\begin{conjecture}\label{conj:lovasz}Every connected Cayley graph on a finite group with at least 3 elements is Hamiltonian.\end{conjecture}

Despite over sixty years of research, Conjecture 1.1 remains wide open.
It has been verified for abelian groups and specific families such as $p$-groups (for references and many more results see~\cite{kutnar2009hamilton}). 
Pak and Radoičić~\cite{PAK20095501} showed that every finite group $G$ of size at least 3 has a generating set $S\subset G$ with $|S|\le \log_2|G|$ for which $\mathrm{Cay}_G(S)$ is Hamiltonian. The conjecture is also known for ``typical'' generating sets: if $S\subset G$ is a random subset of size $|S|\geq C\log |G|$ (where $C$ is a sufficiently large constant), then with high probability $\mathrm{Cay}_G(S)$ is an expander graph (due to the Alon--Roichman theorem~\cite{alon1994random}), and hence, is Hamiltonian, as established in a recent result~\cite{draganic2024hamiltonicity}.

\par For arbitrary Cayley graphs without typicality assumptions, Hamiltonicity is far from being established. In fact, there is no widespread consensus on whether the conjecture is true; indeed, Babai~\cite{babai1994automorphism} conjectured that the statement is false, suggesting the existence of infinitely many connected Cayley graphs $G$ where the longest cycle has length at most $(1-c)|G|$ for a fixed constant $c>0$. In the other direction, the longest cycle that is known to exist has length $\Theta(n^{9/14})$~\cite{norin2026small} (see also (\cite{groenland2025longest, bucic2026long}). 

\par On the other hand, for dense Cayley graphs, meaning those Cayley graphs with $\Omega(n^2)$ edges, the Lovasz conjecture holds. This was established by Christofides, Hladk{\`y} and M{\'a}th{\'e}~\cite{christofides2014hamilton} who showed that for every $\varepsilon>0$ and $n$ sufficiently large, every $n$-vertex connected Cayley graph\footnote{In fact, the result of \cite{christofides2014hamilton} applies to the broader class of vertex-transitive graphs.} of degree $d\ge \varepsilon n$ has a Hamilton cycle. 
A key part of the argument in \cite{christofides2014hamilton} is a structural result that decomposes a given vertex-transitive graph into components that satisfy a robust notion of connectivity. The proof in \cite{christofides2014hamilton} uses Szemer\'edi's regularity lemma to facilitate this decomposition. \par The regularity lemma is a powerful tool, however, it does not give useful information for sparse graphs, i.e. those with $o(n^2)$ edges. The use of the regularity lemma also yields dependencies that are of tower-type, presenting a challenge for practical efficiency. Although the methods of Christofides, Hladk{\`y} and M{\'a}th{\'e} have influenced further results that also provide a structural decomposition of graphs into their expanding components while managing to avoid the use of the regularity lemma (see the work of K\"uhn, Lo, Osthus and Staden~\cite{kuhn2015robust} and also \cite{kuhn2014hamilton}), such results also come with costly exponential dependencies and do not apply to sparse graphs. 

\par In this paper, we break this density barrier encountered in previous approaches and confirm the Lov\'asz conjecture for polynomially sparse Cayley graphs.
\begin{theorem}\label{thm:mainthm} There exists a constant $c\geq1/200$ such that for all sufficiently large $n$, every connected $n$-vertex Cayley graph of degree $d > n^{1-c}$ has a Hamilton cycle. 
\end{theorem}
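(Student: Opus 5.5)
We plan to follow the robust-expander-decomposition strategy of Christofides, Hladký and Máthé, but to replace Szemerédi's regularity lemma by an arithmetic regularity statement tailored to Cayley graphs, with polynomial rather than tower-type losses. Write $\Gamma=\mathrm{Cay}_G(S)$ with $|G|=n$ and $|S|=d>n^{1-c}$. The first step is a structural dichotomy for $S$. Either $\Gamma$ is already a robust expander, in the sense that every set $X$ with $|X|\le n/2$ has a robust neighbourhood, say at least $|X|+n^{-\delta}d|X|/n\cdot n$ vertices with many neighbours in $X$. Or there is a subgroup $H\le G$ (or a coset progression-like object) such that $S$ is concentrated on a bounded number of cosets of $H$, with $|H|\ge n^{1-O(c)}$.

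To obtain this dichotomy we intend to run an energy-increment argument on the convolution $1_S*1_S$. If a set $X$ fails to expand, then $1_X$ correlates with $1_X*1_S$ much more than a random set would. A Balog–Szemerédi–Gowers / Freiman-type step, which holds in arbitrary groups with polynomial bounds (Tao's noncommutative BSG and the approximate-group results, or just Kneser/Olson-type subgroup bounds at density $n^{-c}$), should then yield a large subgroup $H$ such that $X$ and $S$ are each nearly unions of $H$-cosets. Iterating at most $O(1/c)$ times, with polynomial losses at each step, keeps everything above $n^{1-O(c)}$ provided $c$ is small (hence a constant like $1/200$).

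Given such an $H$, the second step is to analyse the quotient structure. The vertex set splits into left cosets $gH$, and each coset induces a Cayley graph $\mathrm{Cay}_H(S\cap H)$. Using the arithmetic regularity, we want $S\cap H$ to be dense and "pseudorandom" inside $H$ in the sense that each $\mathrm{Cay}_H(S\cap H)$ is a robust expander of degree $|H|^{1-O(c)}$. Between cosets, the edges come from the elements of $S\setminus H$. These elements generate, together with $H$, all of $G$ by connectivity, and the cosets they connect form a connected Cayley-like graph on $G/H$ of bounded size. We then construct the Hamilton cycle in three parts:
\begin{enumerate}[(i)]
\item Find a closed walk on cosets that visits every coset and uses cross edges in disjoint, well-spread positions, arranged so that within each coset the entry and exit vertices pair up. Since there are only $O(1)$ cosets, a short absorbing path system suffices to fix parity issues.
\item Inside each coset, apply a Hamilton-connectivity result for robust expanders that works at degree $|H|^{1-O(c)}$; the recent expander Hamiltonicity results, e.g.\ \cite{draganic2024hamiltonicity}, together with linking/absorption give this.
\item Cover each coset by a spanning linear forest with prescribed endpoints, and concatenate.
\end{enumerate}

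The main obstacle is the first step: producing a decomposition into cosets of a subgroup where each piece is a genuine sparse robust expander. We need this with only polynomial losses, and without the tower-type or exponential dependencies that appear in the Christofides–Hladký–Máthé and Kühn–Lo–Osthus–Staden decompositions. The subtle case is when $S$ is close to a subgroup only approximately, for example a union of cosets of $H$ plus a sparse remainder. Here the sparse remainder must be used both for connectivity and for the parity/bipartiteness obstructions, for instance when $\mathrm{Cay}_H(S\cap H)$ is nearly bipartite because $S\cap H$ lies mostly in a coset of an index-2 subgroup. We would handle this by refining $H$ to that index-2 subgroup and treating the bipartite case separately, verifying balance via vertex-transitivity.
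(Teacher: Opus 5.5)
The genuine gap is in your steps (ii)--(iii). They carry essentially all of the difficulty, they are asserted rather than proved, and the tool you cite for them does not apply.

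No decomposition into cosets of a subgroup can make the pieces strong expanders in general. For $p$ prime and $S$ a symmetric interval of density $\sigma$ in $\mathbb{Z}_p$, there is no nontrivial subgroup to pass to, yet $\lambda_2=(1-\Theta(\sigma^2))d$. So the spectral hypothesis $\lambda\le d/C$ of \cite{draganic2024hamiltonicity} fails. The weak arithmetic regularity lemma used in the paper (Theorem~\ref{prop:RobustCayley-general}) provides only that $\mathrm{Cay}_H(S\cap H)$ has no $\sigma^3/2000$-sparse cuts. That is robust expansion in the weak sense $|RN(U)\setminus U|\ge\nu n$ with $\nu=\sigma^4/K$. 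This is also too weak for robust-expander Hamiltonicity theorems such as Lo--Patel \cite{lo2015hamilton}, which need $|RN^+(U)|\ge|U|+\nu n$. A non-bipartite Cayley graph with fewer than $\nu n$ generators inside an index-$2$ subgroup can have no sparse cuts while violating this for $U$ equal to that subgroup.

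So there is no off-the-shelf result partitioning a piece into paths with prescribed endpoint pairs. This is the paper's Theorem~\ref{thm:maintechnical}, and ``linking/absorption'' is exactly what has to be designed. The paper proceeds as follows:
\begin{itemize}
\item It uses the weak connecting property to find an odd cycle of length at most $\sigma^{-5}$, and builds from it an absorber for a single vertex.
\item It takes $m=n^{0.6}$ random left translates of this absorber. The translates are essentially disjoint, and the graph left after removing them is regular up to an error $O(\sigma^{-9}\sqrt{m\log n})$.
\item It covers the remainder with few paths, and chains the absorbers through an auxiliary digraph on the translates, using a Magnant--Martin-type theorem for almost-regular (di)graphs \cite{christoph2025cycle}.
\item It links everything through the set of translated centres and absorbs the centres left uncovered.
\item The exactly bipartite case is handled separately, by contracting the perfect matching $h\mapsto hg'$ and applying Lo--Patel to the resulting digraph.
\end{itemize}

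Two further points in your plan are incorrect.

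First, $|H|\ge n^{1-O(c)}$ bounds the number of cosets only by $n^{O(c)}$ (in the paper, up to about $2\sigma^{-1}$), not by $O(1)$. So neither routing between cosets nor parity can be repaired by a bounded ad hoc gadget. A Hamilton cycle in the vertex-transitive quotient graph would itself be an instance of Lov\'asz's problem, so the walk must revisit cosets. The paper lifts an Euler tour of the doubled quotient (regular by vertex-transitivity) to a matching of cross edges. This uses that adjacent cosets are joined by a matching of size $|H\cap sHs^{-1}|\ge|H|^2/|G|$. Consequently it needs the in-coset result for a number of prescribed pairs of order $\sigma^{-1}$, not a bounded number.

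Second, passing to the index-$2$ subgroup is correct only when $\mathrm{Cay}_H(S\cap H)$ is exactly bipartite. Even then, the paper uses that subgroup only to balance the skeleton's endpoints between the two sides, while still working inside $\mathrm{Cay}_H(S\cap H)$. In the nearly bipartite case, the refined pieces keep only the few generators lying in the index-$2$ subgroup and may even be disconnected. Those generators must instead be exploited as odd cycles, which is what the absorbers do.

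Finally, your energy-increment/BSG derivation of the decomposition is heuristic: structure theorems for approximate groups in general groups do not come with polynomial bounds. This step, however, can simply be imported from \cite{bedert2025graham}.
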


A key feature of our approach is that it avoids the machinery of the regularity lemma, relying instead on mild forms of spectral expansion. 
One of the key ingredients in our proof is a weak arithmetic regularity lemma (see Theorem~\ref{prop:RobustCayley-general} below) recently introduced by Bedert, Buci\'c, Kravitz, Montgomery and M\"uyesser~\cite[Theorem 1.5]{bedert2025graham} in their work concerning Graham's rearrangement conjecture. This weak arithmetic regularity lemma is a structural decomposition result, similar in spirit to the aforementioned work, but with much better quantitative dependencies. However, unlike those provided by the usual regularity lemma, the expansion conditions guaranteed by weak arithmetic regularity lemma are quite mild. Therefore, using this decomposition to produce a Hamilton cycle is a delicate task and is the crux of our work. To surmount this difficulty, we develop an efficient implementation of the absorption method specialised to sparse Cayley graphs. See Section~\ref{sec:outline} for a proof overview.

\subsection{Future directions}
\par We made no effort to optimize the constant $1/200$ that appears in Theorem~\ref{thm:mainthm}, however, our methods as presented here cannot be pushed beyond Cayley graphs of degree $\sqrt{n}$. Beyond this threshold, the weak arithmetic regularity lemma stops giving useful information and several of our arguments that build cycles out of small linear forests start to break down.
Further progress requires a more efficient regularity lemma providing a decomposition into \textit{approximate subgroups} rather than subgroups (see the discussion in~\cite{bedert2025graham}) and also further ideas for finding Hamilton cycles in sparser graphs. Both of these aspects present considerable technical challenges, so we leave confirming the Lov\'asz conjecture for graphs of degree $n^{o(1)}$ for future work. 

\par It is likely that dense Cayley graphs might have enough structure to support Hamilton cycles, or even Hamilton decompositions, in a more robust sense than general Cayley graphs. For example, Alspach~\cite{alspach1990decomposition} conjectured that all abelian Cayley graphs of even degree admit a decomposition of their edge-set into Hamilton cycles. This is known to not hold for general (nonabelian) Cayley graphs \cite{bryant2015vertex}, but, under a density assumption as in the current paper, all even degree Cayley graphs may admit a Hamilton decomposition. In another direction, Erd\H{o}s and Trotter \cite{trotter1978cartesian} showed that a directed version of the Lov\'asz conjecture cannot hold (see \cite{bucic2026long} for further interesting recent work in this direction), however, it appears to us that in a dense regime, a directed generalisation may hold, and our methods may generalise to this setting. 
\par Finally, we put forth the following conjecture that would be a simultaneous generalisation of the Lov\'asz conjecture and Pos\'a's seminal result that the random graph $G(n,p)$ is Hamiltonian for $p\gg \log n /n$.
\begin{conjecture}\label{conj:percolation}
    There exists a sufficiently large $C$ so that the following holds. Let $G$ be a Cayley graph on $n$ vertices with degree $d$ and let $G_p$ be the random graph obtained by sampling each edge of $G$ independently with probability $p$. If $p\geq C\log n/d$, then with high probability, $G_p$ has a Hamilton cycle.
\end{conjecture}
It would already be interesting to confirm the above when $d>n^{1-c}$, and we believe our techniques would bear relevance for this problem. We also remark that if the above conjecture holds, in particular, $G_p$ would contain a perfect matching (if $n$ is even). Determining whether this is true already seems to be an interesting problem.

Motivated by algorithmic applications for the perfect matching problem, Goel, Kapralov, and Khanna~\cite{goel2010perfect, goel2019perfect} (see also \cite{glebov2021perfect}) studied the analogous problem when $G$ is a general $d$-regular bipartite graph. In this case, they showed that $G_p$ contains a perfect matching with high probability whenever $p\geq cn\log n/d^2$, and they provided constructions showing the bound on $p$ is best possible up to logarithmic factors. Our conjecture above, on the other hand, predicts that perfect matchings (and Hamilton cycles) in Cayley graphs are significantly more robust to random edge-deletions compared to arbitrary regular graphs. 

\par We remark that in the context of Conjecture~\ref{conj:percolation}, we can at least ensure that $G_p$ is connected with high probability. Indeed, any (connected) $d$-regular Cayley graph $G$ is $d$-edge-connected~\cite{godsil2013algebraic}, and a well-known result of Karger~\cite{karger1994using} implies that for such $G$, $G_p$ is $\Omega(pd)$ edge connected as long as $p=\Omega(\log n/d)$.

\section{Outline of the proof}\label{sec:outline}

Suppose $G$ is an $n$-element group and let $S\subset G$ be a symmetric generating subset with $|S|=\sigma n$, where $\sigma\ge n^{-c}$. A general strategy for finding a Hamilton cycle in $\mathrm{Cay}_G(S)$ can be summarised in the following four steps.
\begin{enumerate}[label=\upshape{\textbf{Step~\arabic{enumi}}}]
    \item\label{RRS:1} Find an absorbing structure $A$ in $\mathrm{Cay}_G(S)$ that can later incorporate a ``small'' set of leftover vertices.
    \item\label{RRS:2} Find a linear forest $\mathcal F$ in $\mathrm{Cay}_G(S)-A$ using as few paths as possible and covering almost every vertex in $\mathrm{Cay}_G(S)-A$.
    \item\label{RRS:3} Link up $A$ and the paths in $\mathcal F$ to form an almost spanning cycle $C$ which has $A$ as a segment.
    \item\label{RRS:4} Use the absorbing property of $A$ to incorporate the remaining vertices to complete a Hamilton cycle. 
\end{enumerate}
This general strategy has been used with great success to attack problems in various settings when the host graph is dense (see e.g. \cite{rodl2008approximate}). For sparse graphs, however, implementing such a strategy presents major difficulties and has only been accomplished in sparse graphs with random-like behaviour (see e.g.~\cite{glock2024hamilton,randomspanningtree}). 
To make this strategy work in the Cayley setting,  we will need to address a series of issues that are not present in the dense case or the random/pseudorandom case, as we outline below.

\subsection{First reduction}As in~\ref{RRS:3} we need to link up components from a linear forest, some sort of robust connectivity property may be helpful. Although it is not true that arbitrary Cayley graphs have the connectivity properties we would like, we will use the fact that they can be partitioned into cosets with strong connectivity properties, as defined below.

\begin{definition}A $\zeta$-sparse-cut in a graph $\Gamma$ is a partition $V(\Gamma)=X\cup Y$ such that $e(X,Y)\leq \zeta |X||Y|$.\end{definition}
We will use the following result from~\cite{bedert2025graham}.
\begin{theorem}[Bedert, Buci\'c, Kravitz, Montgomery, M\"uyesser~\cite{bedert2025graham}]\label{prop:RobustCayley-general}
Let $\sigma\in(0,1]$ and $\varepsilon\in(0,1/2)$. Let $G$ be a finite group and $S\subset G$ be a subset with density $\sigma$. Then, there is a subgroup $H\leq G$ such that
    \begin{enumerate}[label=\upshape{(\roman{enumi})}]
        \item\label{reg:lemma:1} $|S\cap H|\geq (1-\varepsilon)|S|$, and
        \item\label{reg:lemma:2} $\mathrm{Cay}_H(S\cap H)$ has no $\varepsilon\sigma^3/1000$-sparse cuts.
    \end{enumerate}    
\end{theorem}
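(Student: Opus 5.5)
The plan is to build $H$ by an iterative \emph{descent} through subgroups. Start with $H_0=G$. Suppose $H_i$ has been found with $|S\cap H_i|\ge(1-\varepsilon_i)|S|$, where $\varepsilon_i<\varepsilon$ is the loss so far. If $\mathrm{Cay}_{H_i}(S\cap H_i)$ has no $\varepsilon\sigma^3/1000$-sparse cut, stop. Otherwise, use a sparse cut to produce a proper subgroup $H_{i+1}<H_i$ that still contains almost all of $S\cap H_i$.

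Two facts control the iteration. First, the relative density $\sigma_i:=|S\cap H_i|/|H_i|\ge(1-\varepsilon)\sigma|G|/|H_i|$ satisfies $\sigma_i\ge\sigma/2$. Second, $[H_i:H_{i+1}]\ge2$, so $\sigma_{i+1}\ge 2(1-\text{loss})\sigma_i$. Since densities are at most $1$, the process stops after $t\le\log_2(2/\sigma)+1$ steps. I would allow the loss at step $i$ to be about $\varepsilon\,\sigma_i^{\Theta(1)}$ (or $\varepsilon2^{-(t-i)}$), so that the losses sum geometrically to at most $\varepsilon$.

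\textbf{Step 1: a sparse cut gives many almost-periods.} Let $H=H_i$ and $A_0=S\cap H$, and let $H=X\cup Y$ be a $\zeta$-sparse cut with $\zeta=\varepsilon\sigma^3/1000$. Taking complements if necessary (symmetry of $S$ makes $e(X,Y)$ symmetric), assume $|X|\le|H|/2$. Then
$$e(X,Y)=\sum_{s\in A_0}|Xs\cap Y|\le\zeta|X||Y|\le\zeta|X||H|.$$
Dividing by $|A_0|=\sigma_i|H|\ge\sigma|H|/2$, the average of $|Xs\setminus X|$ over $s\in A_0$ is at most $2\zeta|X|/\sigma$. By Markov's inequality, all but an $\varepsilon'$-fraction of $s\in A_0$ satisfy $|Xs\,\triangle\,X|\le\delta|X|$, where $\delta=4\zeta/(\varepsilon'\sigma)\approx\sigma^2/(250\cdot(\varepsilon'/\varepsilon))$. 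Let $A\subseteq A_0$ be this set of almost-periods, made symmetric by intersecting with $A^{-1}$; this costs at most another $\varepsilon'$-fraction because $|Xs^{-1}\triangle X|=|X\triangle Xs|$. Also add the identity to $A$.

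\textbf{Step 2: products of almost-periods, and the main obstacle.} The error is subadditive: $|Xab\,\triangle\,X|\le|Xa\,\triangle\,X|+|Xb\,\triangle\,X|$, since right multiplication preserves sizes. Hence every element of $A^k$ is a $k\delta$-almost-period. I then need a growth lemma for a symmetric set $A\ni 1$ of density $\alpha\ge\sigma/4$ in $H$: $A^k=\langle A\rangle$ for some $k=O(1/\alpha)$. The argument I would give is that as long as $A^j\neq\langle A\rangle$, the set $A^{j+2}$ gains at least $|A|$ new elements compared with $A^j$. To see this, pick $x\in A^{j+1}\setminus A^j$; the translate $xA$ is then essentially disjoint from $A^j$, or else one argues via cosets. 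This would give $k\le 2/\alpha+2=O(1/\sigma)$. I expect this growth lemma, and the bookkeeping that makes $k\delta\approx\sigma^2\cdot O(1/\sigma)$ small, to be the main technical point. It is exactly why $\zeta$ needs a factor $\sigma^3$: one factor of $\sigma$ comes from averaging over $A_0$, one from the length $k$ of the product, and one is spare for the per-step losses.

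\textbf{Step 3: the new subgroup is proper, and iteration.} Set $H_{i+1}=\langle A\rangle$. By Step 2, every $h\in H_{i+1}$ satisfies $|Xh\,\triangle\,X|\le k\delta|X|<|X|/2$. On the other hand, averaging over all $h\in H$ gives $\mathbb{E}_h|X\cap Xh|=|X|^2/|H|\le|X|/2$. So not all of $H$ can consist of such almost-periods, and therefore $H_{i+1}\ne H$. Moreover $S\cap H_{i+1}\supseteq A$, so $|S\cap H_{i+1}|\ge(1-2\varepsilon')|S\cap H_i|$. Choosing $\varepsilon'$ per step as in the first paragraph and iterating at most $t$ times gives the final $H$. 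This $H$ satisfies property (i) of Theorem~\ref{prop:RobustCayley-general} by the accumulated loss bound, and property (ii) because the process stopped.
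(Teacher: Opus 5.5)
Your descent argument is correct in substance, and it takes a genuinely different route from the paper's. The paper does not prove this statement. It imports it from \cite{bedert2025graham}, where Theorem~1.5 supplies a subgroup $H$ containing most of $S$ such that $\mathrm{Cay}_H(S\cap H)$ has a gap between its top two adjacency eigenvalues; property (ii) is then read off from the expander mixing lemma. You stay on the combinatorial side throughout:
\begin{itemize}
\item a sparse cut makes most $s\in S\cap H$ almost-periods of $X$ under right multiplication;
\item subadditivity and a covering bound $\langle A\rangle=A^{O(1/\alpha)}$ propagate this to all of $\langle A\rangle$;
\item the identity $\sum_{h\in H}|X\cap Xh|=|X|^2\le |X||H|/2$ forces $\langle A\rangle\neq H$;
\item density doubling caps the number of rounds.
\end{itemize}
The spectral route gives a stronger and more reusable conclusion: a spectral gap, of which the cut property is one consequence. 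Yours is elementary and self-contained, and delivers exactly what this paper consumes via Lemma~\ref{lem:sparse-cut}. It is even slightly better quantitatively. At round $i$, set the Markov threshold for $|Xs\setminus X|$ at $\sigma_i|X|/25$; the loss is then $25\zeta/\sigma_i^2$. Since $\sigma_i\ge 2^{i-1}\sigma$, these losses sum to $O(\zeta/\sigma^2)$, so $\zeta$ of order $\varepsilon\sigma^2$ already suffices. The third factor of $\sigma$ is therefore slack; in your cruder bookkeeping it only absorbs the $O(\log(1/\sigma))$ rounds.

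One step needs repair: the growth lemma as you state it is false. You claim that $A^j\neq\langle A\rangle$ forces $|A^{j+2}\setminus A^j|\ge |A|$. Counterexample: in $\mathbb{Z}_M$ take $A=\{-r,\dots,r\}$ and $M=2jr+2$. Then $A^j$ misses only the element $M/2$, so $A^{j+2}$ gains a single element; likewise the translate $xA$ need not avoid $A^j$. What does hold is weaker but enough. For $x\in A^{j+1}\setminus A^j$, the translate $xA$ avoids $A^{j-1}$: if $xa\in A^{j-1}$, then $x\in A^{j-1}a^{-1}\subseteq A^j$ by symmetry of $A$. Hence $|A^{j+2}|\ge |A^{j-1}|+|A|$, and so $A^{3\lceil 1/\alpha\rceil}=\langle A\rangle$. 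The constant $3$ in place of $2$ is harmless. With $\zeta=\varepsilon\sigma^3/1000$ and threshold $\sigma|X|/25$:
\begin{itemize}
\item each round loses at most a $0.05\varepsilon\sigma$-fraction of $S\cap H_i$;
\item $k\le 10/\sigma$;
\item every $h\in\langle A\rangle$ satisfies $|Xh\triangle X|\le 0.8|X|$;
\item there are fewer than $\log_2(2/\sigma)$ rounds, so the total loss is at most $0.06\varepsilon$.
\end{itemize}
Finally, intersecting with $A^{-1}$ costs nothing, since $|Xs^{-1}\triangle X|=|Xs\triangle X|$.
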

Theorem~\ref{prop:RobustCayley-general} can be seen as some sort of \textit{weak arithmetic regularity lemma} for Cayley graphs, similar to Green's arithmetic regularity lemma~\cite{green2005szemeredi}. We remark that~\ref{reg:lemma:2} is a consequence of a more general fact about the spectrum of $\mathrm{Cay}_H(S\cap H)$. Indeed, Theorem 1.5 from~\cite{bedert2025graham} ensures that there is a small but significant gap between the first two eigenvalues of the adjacency matrix of $\mathrm{Cay}_H(S\cap H)$, which, together with a version of the \textit{expander mixing lemma}, implies that $\mathrm{Cay}_H(S\cap H)$ satisfies~\ref{reg:lemma:2}.

 For $\eps>0$, Theorem~\ref{prop:RobustCayley-general} gives a subgroup $H\le G$ such that $|S\cap H|\ge (1-\eps)|S|$ and $\mathrm{Cay}_H(S\cap H)$ has no $\eps\sigma^3/1000$-sparse cuts, and thus, as $G$ partitions into left cosets $g_1H,g_2H,\ldots, g_mH$, we obtain a (vertex) partition of $\mathrm{Cay}_G(S)$ into graphs which are all isomorphic to $\mathrm{Cay}_H(S\cap H)$, and therefore with no $\eps\sigma^3/1000$-sparse cuts (here, we used that multiplication on the left is an automorphism of Cayley graphs).

To simplify the discussion here, let us assume that for each $i\in [m]$ and working modulo $m$, there are elements $h_i,h_i'\in g_iH$ such that $h_i'h_{i+1}$ is an edge in $\mathrm{Cay}_{G}(S)$, and let us further assume all these edges $h_i'h_{i+1}$ are disjoint, i.e. they form a matching. In this scenario, to find a Hamilton cycle in $\mathrm{Cay}_{G}(S)$, it is enough to prove that $\mathrm{Cay}_H(S\cap H)$ is Hamilton connected (meaning there is a Hamilton path between any pair of vertices). So the original problem then boils down to establishing Hamilton connectivity of Cayley graph with an extra `no sparse cut property'.

In general, there is no particular reason to expect the existence of a matching $h_i'h_{i+1}$ that connect the clusters $gH_i$ in a cyclic fashion. Instead, we will need to work with a more complicated connecting structure (see Section~\ref{section:skeleton} for details), that require us to use up to $O(n^{\eps})$ connecting vertices from each left coset rather than just two elements. Thus, the problem reduces to proving the next theorem.

\begin{theorem}\label{thm:maintechnical}There is $n_0\in\mathbb N$ such that for every $n\ge n_0$, $\sigma \ge n^{-1/200}$ and every $k\in[1,2n^{1/200}]$, the following holds. Let $G$ be a $n$-element group and let $S\subset G$ be a (symmetric) generating set with $|S|=\sigma n$ such that $\mathrm{Cay}_G(S)$ has no $\sigma^3/2000$-sparse cuts.
\begin{itemize}
    \item If $\mathrm{Cay}_G(S)$ is non-bipartite, suppose we have pairwise distinct elements $a_1,\ldots,a_k,b_1,\ldots,b_k\in G$. 
    \item If $\mathrm{Cay}_G(S)$ is bipartite with parts $H$ and $\Bar{g}H$ for some subgroup $H\le G$ and  $\Bar{g}\in G$, suppose we have pairwise distinct elements $a_1,\ldots, a_k, b_1,\ldots, b_k\in G$ with the pairs $\{a_i,b_i\}$ fully contained in $H$ or in $\Bar{g}H$, and with an equal number of pairs contained in both parts.
\end{itemize}
Then, there is a spanning linear forest\footnote{ A spanning linear forest in a graph $H$ is a collection of vertex-disjoint paths partitioning the vertices of $H$.} $\mathcal F=\{P_1\ldots, P_k\}$ in $\mathrm{Cay}_G(S)$ such that, for each $i\in [k]$, $P_i$ has endpoints $a_i$ and $b_i$. 
\end{theorem}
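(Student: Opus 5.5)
The plan is to run the four-step absorption strategy of Section~\ref{sec:outline} inside $\Gamma=\mathrm{Cay}_G(S)$, which now has no $\sigma^3/2000$-sparse cuts, and to output $k$ paths with prescribed ends instead of one cycle.

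\textbf{Expansion and linking.} First I turn the absence of $\sigma^3/2000$-sparse cuts into usable expansion. Every set $X$ with $|X|\le n/2$ has edge boundary at least $\sigma^3|X|n/4000$. Since $\Gamma$ has degree $\sigma n$, this gives vertex expansion by a factor $1+\Omega(\sigma^2)$ for sets of linear size. Iterating, balls around any vertex reach size $n/2$ within $O(\sigma^{-2}\log n)$ steps. So any two vertices are joined by a path of length $L=O(\sigma^{-2}\log n)=n^{o(1)}\cdot n^{1/100}$, and this remains true after deleting any set of size $o(\sigma^3 n)$. In the bipartite case the same holds provided the endpoints lie in the correct parts.

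By greedily taking such paths while deleting used vertices, I can connect up to $n^{1/50}$ prescribed pairs, including the $k\le 2n^{1/200}$ pairs $(a_i,b_i)$. This uses only $n^{1/20}\ll \sigma^3 n$ vertices. This is the linking tool for \ref{RRS:3}.

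\textbf{Absorber.} Next I build an absorbing path $A$ using the group structure. For $s,t\in S$ with $st\in S$, a vertex $x$ can be absorbed into an edge $\{y,ys'\}$ whenever suitable group relations hold. More robustly, I use the left-translation symmetry: the number of triangles or short cycles through each vertex is the same for every vertex, so every vertex lies in many gadgets. The approach I would try first is to pick a random set $R$ of translates $gW$ of a fixed small gadget $W$, of density $n^{-\delta}$. Concentration, together with vertex-transitivity, then ensures that every vertex $v$ has $\Omega(|R|\sigma^{O(1)})$ gadgets able to absorb it. Chaining these gadgets via the linking lemma produces an absorbing path $A$ that can swallow any set $U$ of size at most $n^{1-2\delta}$, with the usual parity caveat in the bipartite case.

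\textbf{Covering, linking and absorbing.} In $\Gamma-A$, I find a linear forest covering all but $n^{1-2\delta}$ vertices using few paths. I would do this by repeatedly extending paths greedily with Pósa-type rotations inside expanding sets: the minimum degree stays above $\sigma n/2$ while at least $n^{1-2\delta}$ vertices remain. This yields at most $n/(\sigma n)\cdot n^{o(1)}$ paths. I then link the paths, $A$, and the pairs $(a_i,b_i)$ into $k$ paths using reserved random vertex sets for the connections, so that linking never runs out of room. Finally I absorb the leftover vertices into $A$. In the bipartite case I track the balance of the two parts throughout; the balance of the pairs guarantees that the leftover set is balanced, so absorption is possible.

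\textbf{Main obstacle.} The hard step is the absorber. With only the weak no-sparse-cut expansion and $\sigma=n^{-1/200}$, generic dense-graph absorbers fail. Gadgets must be defined through group words in $S$, and one has to show that each vertex has many of them despite sparsity. I expect this to use the $\sigma^3$ spectral-type bound to count short closed walks, and this is where the exponent $1/200$ would be lost.
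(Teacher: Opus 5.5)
There is a genuine gap. The two steps that carry the difficulty, the absorber and the cover by few paths, are not actually supplied. As you set them up, the accounting between them also cannot close.

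\textbf{Absorber.} Your key claim is that for one fixed gadget $W$, a $\sigma^{O(1)}$-fraction of the translates $gW$ can absorb any given vertex $v$. By translation invariance, this says exactly that the set of $u$ for which $W\cup\{u\}$ has a Hamilton path with the same ends as $W$ has size $\sigma^{O(1)}n$. You never exhibit such a $W$, and it is not automatic:
\begin{itemize}
\item A gadget designed around a root $w_0$ (like the paper's) is only guaranteed to absorb $gw_0$ after translation.
\item Triangle-type insertions need not exist. Take $\mathbb Z_n$ with $n$ odd and $S$ the residues within $\sigma n/2$ of $n/2$. This graph is triangle-free, non-bipartite and has odd girth above $1/\sigma$. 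Its spectral gap is of order $\sigma^3 n$, so it has no $\sigma^3/2000$-sparse cuts.
\end{itemize}
The paper never tries to absorb arbitrary vertices. It builds a single $g_0$-absorber $A$ from a short odd cycle (Claim~\ref{thm:non-bipartite:claim:1}) plus $(x_i,y_i)$-paths, and takes $m=n^{0.6}$ random translates of it. It then forces the leftover into $T_{g_0}=\{g_ig_0:i\in I\}$ by routing every connection inside $T_{g_0}$ (property~\ref{translates:connecting:2}), so each leftover $g_ig_0$ is absorbed by its own $g_iA$. Connecting inside a random set also needs robust expansion (Lemmas~\ref{lem:sparse-cut} and~\ref{lem:connecting:translates}), which your ball-growing argument does not give.

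\textbf{Cover and linking.} Your statement that the minimum degree stays above $\sigma n/2$ while $n^{1-2\delta}$ vertices remain cannot hold for the graph on the uncovered vertices. That graph has fewer than $\sigma n/2$ vertices long before the end, since you need $|A|\approx n^{1-\delta}\ll\sigma^3 n$. Without regularity, weak expansion does not bound the number of paths: an unbalanced bipartite graph is a robust expander. The tool that does bound it is Lemma~\ref{lem:directed:magnant}, giving $O(n\log d/d+nd'/d)$ paths.

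This is where your plan breaks quantitatively.
\begin{itemize}
\item If the $|R|$ gadgets are chained by linking paths through arbitrary vertices, then $\Gamma-A$ can have $d'$ of order $|R|L$, so about $|R|L/\sigma$ paths must be linked through the reserve.
\item If they are chained through the reserved random set, $|R|$ connections are made there.
\end{itemize}
Either way at least $|R|$ connections go through the reserve. The connecting lemma only tolerates using a $\nu/100$-fraction of the reserve, so more than $|R|$ reserved vertices stay unused. But $A$ can absorb at most $|R|$ vertices.

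The missing ideas are the following. The paper chains absorbers with no extra vertices, via the auxiliary digraph $D_I$ ($i\to j$ iff $g_iy_0\sim g_jx_0$). By \ref{translates:degree:1}, $D_I$ is $(\sigma m\pm 10\sqrt{m\log n})$-regular, so Lemma~\ref{lem:directed:magnant} yields only $O(\sigma^{-1}\sqrt{m\log n})$ chains. The concentration in Lemma~\ref{lem:randomtranslates} makes $G\setminus T_F$ regular up to $O(|F|\sqrt{m\log n})$, so it splits into $O(\sigma^{-10}\sqrt{m\log n})$ paths. All links then use $O(\sigma^{-14}\sqrt{m\log n})\ll\sigma^4 m$ vertices of $T_{g_0}$. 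This is affordable because the absorbing capacity is all of $T_{g_0}$, which has size about $m$.

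\textbf{Bipartite case.} Here no single vertex can ever be absorbed, for parity reasons. ``Tracking the balance'' does not specify pair-absorbers, nor why an arbitrary balanced leftover can be absorbed. The paper uses no absorbers in this case:
\begin{itemize}
\item It contracts the perfect matching $\{h,hg'\}$, $g'\in S$, into a digraph.
\item It gets the strong expansion $|RN^+(U)|\ge |U|+\nu n$ from Proposition~\ref{lem:bipartite expansion}.
\item It pre-links the auxiliary endpoints and connects $k-1$ pairs greedily.
\item It finishes with Lo--Patel Hamilton connectivity (Corollary~\ref{cor:lopatel}).
\end{itemize}
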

\subsection{Well-connected case: Theorem~\ref{thm:maintechnical}}
Here we outline the most technical part of our proof (Theorem~\ref{thm:maintechnical}) for which we will use the absorption method via~\ref{RRS:1}--\ref{RRS:4} as mentioned before. To simplify the discussion here, let us assume $\mathrm{Cay}_G(S)$ is non-bipartite and show how to find a Hamilton path in $\mathrm{Cay}_G(S)$ with no restriction on the endpoints. Recall that in Theorem~\ref{thm:maintechnical} we assume that $\mathrm{Cay}_G(S)$ has no $\sigma^3/2000$-sparse cuts, which implies the following (informal) connectivity property (see Section~\ref{sec:robust expansion} for details).
\begin{enumerate}[label=\upshape{\textbf{P}}]
    \item\label{intro:connect} For any $x,y\in G$ and any `small' $Z\subset G$, there is an $(x,y)$-path in $\mathrm{Cay}_G(S)$ of length $O(\sigma^{-4})$ whose internal vertices avoid $Z$.
\end{enumerate}
We will use property~\ref{intro:connect} for two purposes: a) for constructing the absorbing structure in~\ref{RRS:1}, and b) for linking up the path forest in~\ref{RRS:3}. Let us delve first into the absorbing structure we will use. 

\begin{definition}(Absorber) An \emph{absorber} for a vertex $v$ or a $v$-absorber in a graph $G$ is a subgraph $A\subset G$ such that
\begin{itemize}
    \item $A$ has a Hamiltonian path $P_A$, and
    \item There is another path $P$ in $G$ such that $V(P)=V(A)\cup \{v\}$ and $P$ has the same endpoints as $P_A$.
\end{itemize} \end{definition}
To construct our absorbers, we will use a graph $F_k$ which consists of an odd cycle $v_0x_0x_1\ldots x_ky_0y_ky_{k-1}\ldots y_1$ together with internally vertex-disjoint paths $P_1,\ldots,P_k$, where $P_i$ is an $(x_i,y_i)$-path for each $i\in [k]$. Observe $F_k$ contains two $(x_0,y_0)$-paths, one containing $v_0$ and one avoiding $v_0$. For example, when $k$ is even, the two $(x_0,y_0)$-paths are $x_0x_1P_1y_1y_2P_2x_2x_3P_3\ldots y_kP_kx_ky_0$ and $x_0v_0y_1P_1x_1x_2P_2y_2y_3P_3x_3\ldots x_kP_ky_ky_0$, see Figure~\ref{fig:1} for an illustration.
 
\begin{figure}[h!]
    \centering
    \tikzset{every picture/.style={line width=0.75pt}} 

\begin{tikzpicture}[x=0.75pt,y=0.75pt,yscale=-1,xscale=1, scale=0.9]

\draw    (60,98.21) -- (60,130.54) ;
\draw [shift={(60,130.54)}, rotate = 90] [color={rgb, 255:red, 0; green, 0; blue, 0 }  ][fill={rgb, 255:red, 0; green, 0; blue, 0 }  ][line width=0.75]      (0, 0) circle [x radius= 3.35, y radius= 3.35]   ;
\draw [shift={(60,98.21)}, rotate = 90] [color={rgb, 255:red, 0; green, 0; blue, 0 }  ][fill={rgb, 255:red, 0; green, 0; blue, 0 }  ][line width=0.75]      (0, 0) circle [x radius= 3.35, y radius= 3.35]   ;
\draw    (140,98.21) -- (140,129.9) ;
\draw [shift={(140,129.9)}, rotate = 90] [color={rgb, 255:red, 0; green, 0; blue, 0 }  ][fill={rgb, 255:red, 0; green, 0; blue, 0 }  ][line width=0.75]      (0, 0) circle [x radius= 3.35, y radius= 3.35]   ;
\draw [shift={(140,98.21)}, rotate = 90] [color={rgb, 255:red, 0; green, 0; blue, 0 }  ][fill={rgb, 255:red, 0; green, 0; blue, 0 }  ][line width=0.75]      (0, 0) circle [x radius= 3.35, y radius= 3.35]   ;
\draw    (101,79.19) -- (60,98.21) ;
\draw [shift={(60,98.21)}, rotate = 155.12] [color={rgb, 255:red, 0; green, 0; blue, 0 }  ][fill={rgb, 255:red, 0; green, 0; blue, 0 }  ][line width=0.75]      (0, 0) circle [x radius= 3.35, y radius= 3.35]   ;
\draw [shift={(101,79.19)}, rotate = 155.12] [color={rgb, 255:red, 0; green, 0; blue, 0 }  ][fill={rgb, 255:red, 0; green, 0; blue, 0 }  ][line width=0.75]      (0, 0) circle [x radius= 3.35, y radius= 3.35]   ;
\draw    (101,79.19) -- (140,98.21) ;
\draw [shift={(140,98.21)}, rotate = 25.99] [color={rgb, 255:red, 0; green, 0; blue, 0 }  ][fill={rgb, 255:red, 0; green, 0; blue, 0 }  ][line width=0.75]      (0, 0) circle [x radius= 3.35, y radius= 3.35]   ;
\draw [shift={(101,79.19)}, rotate = 25.99] [color={rgb, 255:red, 0; green, 0; blue, 0 }  ][fill={rgb, 255:red, 0; green, 0; blue, 0 }  ][line width=0.75]      (0, 0) circle [x radius= 3.35, y radius= 3.35]   ;
\draw    (80,160.96) -- (121,160.96) ;
\draw [shift={(121,160.96)}, rotate = 0] [color={rgb, 255:red, 0; green, 0; blue, 0 }  ][fill={rgb, 255:red, 0; green, 0; blue, 0 }  ][line width=0.75]      (0, 0) circle [x radius= 3.35, y radius= 3.35]   ;
\draw [shift={(80,160.96)}, rotate = 0] [color={rgb, 255:red, 0; green, 0; blue, 0 }  ][fill={rgb, 255:red, 0; green, 0; blue, 0 }  ][line width=0.75]      (0, 0) circle [x radius= 3.35, y radius= 3.35]   ;
\draw    (60,98.21) .. controls (62.16,97.28) and (63.71,97.89) .. (64.65,100.05) .. controls (65.59,102.21) and (67.14,102.82) .. (69.3,101.89) .. controls (71.46,100.96) and (73.01,101.57) .. (73.95,103.73) .. controls (74.88,105.9) and (76.42,106.51) .. (78.59,105.58) .. controls (80.75,104.65) and (82.3,105.26) .. (83.24,107.42) .. controls (84.18,109.58) and (85.73,110.19) .. (87.89,109.26) .. controls (90.05,108.33) and (91.6,108.94) .. (92.54,111.1) .. controls (93.48,113.26) and (95.03,113.87) .. (97.19,112.94) .. controls (99.35,112.01) and (100.9,112.62) .. (101.84,114.78) .. controls (102.77,116.95) and (104.31,117.56) .. (106.48,116.63) .. controls (108.64,115.7) and (110.19,116.31) .. (111.13,118.47) .. controls (112.07,120.63) and (113.62,121.24) .. (115.78,120.31) .. controls (117.94,119.38) and (119.49,119.99) .. (120.43,122.15) .. controls (121.37,124.31) and (122.92,124.92) .. (125.08,123.99) .. controls (127.24,123.06) and (128.79,123.67) .. (129.73,125.83) .. controls (130.66,128) and (132.21,128.61) .. (134.38,127.68) .. controls (136.54,126.75) and (138.09,127.36) .. (139.02,129.52) -- (140,129.9) -- (140,129.9) ;
\draw    (140,129.9) -- (121,160.96) ;
\draw [shift={(121,160.96)}, rotate = 121.46] [color={rgb, 255:red, 0; green, 0; blue, 0 }  ][fill={rgb, 255:red, 0; green, 0; blue, 0 }  ][line width=0.75]      (0, 0) circle [x radius= 3.35, y radius= 3.35]   ;
\draw [shift={(140,129.9)}, rotate = 121.46] [color={rgb, 255:red, 0; green, 0; blue, 0 }  ][fill={rgb, 255:red, 0; green, 0; blue, 0 }  ][line width=0.75]      (0, 0) circle [x radius= 3.35, y radius= 3.35]   ;
\draw    (60,130.54) -- (80,160.96) ;
\draw [shift={(80,160.96)}, rotate = 56.68] [color={rgb, 255:red, 0; green, 0; blue, 0 }  ][fill={rgb, 255:red, 0; green, 0; blue, 0 }  ][line width=0.75]      (0, 0) circle [x radius= 3.35, y radius= 3.35]   ;
\draw [shift={(60,130.54)}, rotate = 56.68] [color={rgb, 255:red, 0; green, 0; blue, 0 }  ][fill={rgb, 255:red, 0; green, 0; blue, 0 }  ][line width=0.75]      (0, 0) circle [x radius= 3.35, y radius= 3.35]   ;
\draw    (60,130.54) .. controls (62.24,129.79) and (63.73,130.54) .. (64.47,132.78) .. controls (65.2,135.02) and (66.69,135.77) .. (68.93,135.03) .. controls (71.17,134.29) and (72.66,135.04) .. (73.4,137.28) .. controls (74.14,139.52) and (75.63,140.27) .. (77.87,139.52) .. controls (80.11,138.78) and (81.6,139.53) .. (82.33,141.77) .. controls (83.07,144.01) and (84.56,144.76) .. (86.8,144.02) .. controls (89.04,143.27) and (90.53,144.02) .. (91.27,146.26) .. controls (92.01,148.5) and (93.5,149.25) .. (95.74,148.51) .. controls (97.98,147.77) and (99.47,148.52) .. (100.2,150.76) .. controls (100.94,153) and (102.43,153.75) .. (104.67,153) .. controls (106.91,152.26) and (108.4,153.01) .. (109.14,155.25) .. controls (109.88,157.48) and (111.37,158.23) .. (113.6,157.49) .. controls (115.84,156.75) and (117.33,157.5) .. (118.07,159.74) -- (120.5,160.96) -- (120.5,160.96) ;
\draw    (250,98.21) -- (250,130.54) ;
\draw [shift={(250,130.54)}, rotate = 90] [color={rgb, 255:red, 0; green, 0; blue, 0 }  ][fill={rgb, 255:red, 0; green, 0; blue, 0 }  ][line width=0.75]      (0, 0) circle [x radius= 3.35, y radius= 3.35]   ;
\draw [shift={(250,98.21)}, rotate = 90] [color={rgb, 255:red, 0; green, 0; blue, 0 }  ][fill={rgb, 255:red, 0; green, 0; blue, 0 }  ][line width=0.75]      (0, 0) circle [x radius= 3.35, y radius= 3.35]   ;
\draw    (330,98.21) -- (330,129.9) ;
\draw [shift={(330,129.9)}, rotate = 90] [color={rgb, 255:red, 0; green, 0; blue, 0 }  ][fill={rgb, 255:red, 0; green, 0; blue, 0 }  ][line width=0.75]      (0, 0) circle [x radius= 3.35, y radius= 3.35]   ;
\draw [shift={(330,98.21)}, rotate = 90] [color={rgb, 255:red, 0; green, 0; blue, 0 }  ][fill={rgb, 255:red, 0; green, 0; blue, 0 }  ][line width=0.75]      (0, 0) circle [x radius= 3.35, y radius= 3.35]   ;
\draw  [dash pattern={on 4.5pt off 4.5pt}]  (291,79.19) -- (250,98.21) ;
\draw [shift={(250,98.21)}, rotate = 155.12] [color={rgb, 255:red, 0; green, 0; blue, 0 }  ][fill={rgb, 255:red, 0; green, 0; blue, 0 }  ][line width=0.75]      (0, 0) circle [x radius= 3.35, y radius= 3.35]   ;
\draw [shift={(291,79.19)}, rotate = 155.12] [color={rgb, 255:red, 0; green, 0; blue, 0 }  ][fill={rgb, 255:red, 0; green, 0; blue, 0 }  ][line width=0.75]      (0, 0) circle [x radius= 3.35, y radius= 3.35]   ;
\draw  [dash pattern={on 4.5pt off 4.5pt}]  (291,79.19) -- (330,98.21) ;
\draw [shift={(330,98.21)}, rotate = 25.99] [color={rgb, 255:red, 0; green, 0; blue, 0 }  ][fill={rgb, 255:red, 0; green, 0; blue, 0 }  ][line width=0.75]      (0, 0) circle [x radius= 3.35, y radius= 3.35]   ;
\draw [shift={(291,79.19)}, rotate = 25.99] [color={rgb, 255:red, 0; green, 0; blue, 0 }  ][fill={rgb, 255:red, 0; green, 0; blue, 0 }  ][line width=0.75]      (0, 0) circle [x radius= 3.35, y radius= 3.35]   ;
\draw    (270,160.96) -- (311,160.96) ;
\draw [shift={(311,160.96)}, rotate = 0] [color={rgb, 255:red, 0; green, 0; blue, 0 }  ][fill={rgb, 255:red, 0; green, 0; blue, 0 }  ][line width=0.75]      (0, 0) circle [x radius= 3.35, y radius= 3.35]   ;
\draw [shift={(270,160.96)}, rotate = 0] [color={rgb, 255:red, 0; green, 0; blue, 0 }  ][fill={rgb, 255:red, 0; green, 0; blue, 0 }  ][line width=0.75]      (0, 0) circle [x radius= 3.35, y radius= 3.35]   ;
\draw    (250,98.21) .. controls (252.16,97.28) and (253.71,97.89) .. (254.65,100.05) .. controls (255.59,102.21) and (257.14,102.82) .. (259.3,101.89) .. controls (261.46,100.96) and (263.01,101.57) .. (263.95,103.73) .. controls (264.88,105.9) and (266.42,106.51) .. (268.59,105.58) .. controls (270.75,104.65) and (272.3,105.26) .. (273.24,107.42) .. controls (274.18,109.58) and (275.73,110.19) .. (277.89,109.26) .. controls (280.05,108.33) and (281.6,108.94) .. (282.54,111.1) .. controls (283.48,113.26) and (285.03,113.87) .. (287.19,112.94) .. controls (289.35,112.01) and (290.9,112.62) .. (291.84,114.78) .. controls (292.77,116.95) and (294.31,117.56) .. (296.48,116.63) .. controls (298.64,115.7) and (300.19,116.31) .. (301.13,118.47) .. controls (302.07,120.63) and (303.62,121.24) .. (305.78,120.31) .. controls (307.94,119.38) and (309.49,119.99) .. (310.43,122.15) .. controls (311.37,124.31) and (312.92,124.92) .. (315.08,123.99) .. controls (317.24,123.06) and (318.79,123.67) .. (319.73,125.83) .. controls (320.66,128) and (322.21,128.61) .. (324.38,127.68) .. controls (326.54,126.75) and (328.09,127.36) .. (329.02,129.52) -- (330,129.9) -- (330,129.9) ;
\draw  [dash pattern={on 4.5pt off 4.5pt}]  (330,129.9) -- (311,160.96) ;
\draw [shift={(311,160.96)}, rotate = 121.46] [color={rgb, 255:red, 0; green, 0; blue, 0 }  ][fill={rgb, 255:red, 0; green, 0; blue, 0 }  ][line width=0.75]      (0, 0) circle [x radius= 3.35, y radius= 3.35]   ;
\draw [shift={(330,129.9)}, rotate = 121.46] [color={rgb, 255:red, 0; green, 0; blue, 0 }  ][fill={rgb, 255:red, 0; green, 0; blue, 0 }  ][line width=0.75]      (0, 0) circle [x radius= 3.35, y radius= 3.35]   ;
\draw  [dash pattern={on 4.5pt off 4.5pt}]  (250,130.54) -- (270,160.96) ;
\draw [shift={(270,160.96)}, rotate = 56.68] [color={rgb, 255:red, 0; green, 0; blue, 0 }  ][fill={rgb, 255:red, 0; green, 0; blue, 0 }  ][line width=0.75]      (0, 0) circle [x radius= 3.35, y radius= 3.35]   ;
\draw [shift={(250,130.54)}, rotate = 56.68] [color={rgb, 255:red, 0; green, 0; blue, 0 }  ][fill={rgb, 255:red, 0; green, 0; blue, 0 }  ][line width=0.75]      (0, 0) circle [x radius= 3.35, y radius= 3.35]   ;
\draw    (249,129.9) .. controls (251.23,129.16) and (252.72,129.91) .. (253.47,132.14) .. controls (254.21,134.38) and (255.7,135.13) .. (257.94,134.38) .. controls (260.18,133.63) and (261.67,134.38) .. (262.41,136.62) .. controls (263.15,138.86) and (264.64,139.61) .. (266.88,138.86) .. controls (269.12,138.11) and (270.61,138.86) .. (271.35,141.1) .. controls (272.09,143.34) and (273.58,144.09) .. (275.82,143.34) .. controls (278.06,142.59) and (279.55,143.34) .. (280.29,145.58) .. controls (281.03,147.82) and (282.52,148.57) .. (284.76,147.82) .. controls (287,147.07) and (288.49,147.82) .. (289.23,150.06) .. controls (289.97,152.3) and (291.46,153.05) .. (293.7,152.3) .. controls (295.94,151.55) and (297.43,152.3) .. (298.17,154.54) .. controls (298.91,156.78) and (300.4,157.53) .. (302.64,156.78) .. controls (304.88,156.03) and (306.37,156.78) .. (307.12,159.02) -- (311,160.96) -- (311,160.96) ;
\draw  [dash pattern={on 4.5pt off 4.5pt}]  (440,98.21) -- (440,130.54) ;
\draw [shift={(440,130.54)}, rotate = 90] [color={rgb, 255:red, 0; green, 0; blue, 0 }  ][fill={rgb, 255:red, 0; green, 0; blue, 0 }  ][line width=0.75]      (0, 0) circle [x radius= 3.35, y radius= 3.35]   ;
\draw [shift={(440,98.21)}, rotate = 90] [color={rgb, 255:red, 0; green, 0; blue, 0 }  ][fill={rgb, 255:red, 0; green, 0; blue, 0 }  ][line width=0.75]      (0, 0) circle [x radius= 3.35, y radius= 3.35]   ;
\draw  [dash pattern={on 4.5pt off 4.5pt}]  (520,98.21) -- (520,129.9) ;
\draw [shift={(520,129.9)}, rotate = 90] [color={rgb, 255:red, 0; green, 0; blue, 0 }  ][fill={rgb, 255:red, 0; green, 0; blue, 0 }  ][line width=0.75]      (0, 0) circle [x radius= 3.35, y radius= 3.35]   ;
\draw [shift={(520,98.21)}, rotate = 90] [color={rgb, 255:red, 0; green, 0; blue, 0 }  ][fill={rgb, 255:red, 0; green, 0; blue, 0 }  ][line width=0.75]      (0, 0) circle [x radius= 3.35, y radius= 3.35]   ;
\draw    (481,79.19) -- (440,98.21) ;
\draw [shift={(440,98.21)}, rotate = 155.12] [color={rgb, 255:red, 0; green, 0; blue, 0 }  ][fill={rgb, 255:red, 0; green, 0; blue, 0 }  ][line width=0.75]      (0, 0) circle [x radius= 3.35, y radius= 3.35]   ;
\draw [shift={(481,79.19)}, rotate = 155.12] [color={rgb, 255:red, 0; green, 0; blue, 0 }  ][fill={rgb, 255:red, 0; green, 0; blue, 0 }  ][line width=0.75]      (0, 0) circle [x radius= 3.35, y radius= 3.35]   ;
\draw    (481,79.19) -- (520,98.21) ;
\draw [shift={(520,98.21)}, rotate = 25.99] [color={rgb, 255:red, 0; green, 0; blue, 0 }  ][fill={rgb, 255:red, 0; green, 0; blue, 0 }  ][line width=0.75]      (0, 0) circle [x radius= 3.35, y radius= 3.35]   ;
\draw [shift={(481,79.19)}, rotate = 25.99] [color={rgb, 255:red, 0; green, 0; blue, 0 }  ][fill={rgb, 255:red, 0; green, 0; blue, 0 }  ][line width=0.75]      (0, 0) circle [x radius= 3.35, y radius= 3.35]   ;
\draw  [dash pattern={on 4.5pt off 4.5pt}]  (460,160.96) -- (501,160.96) ;
\draw [shift={(501,160.96)}, rotate = 0] [color={rgb, 255:red, 0; green, 0; blue, 0 }  ][fill={rgb, 255:red, 0; green, 0; blue, 0 }  ][line width=0.75]      (0, 0) circle [x radius= 3.35, y radius= 3.35]   ;
\draw [shift={(460,160.96)}, rotate = 0] [color={rgb, 255:red, 0; green, 0; blue, 0 }  ][fill={rgb, 255:red, 0; green, 0; blue, 0 }  ][line width=0.75]      (0, 0) circle [x radius= 3.35, y radius= 3.35]   ;
\draw    (520,129.9) -- (501,160.96) ;
\draw [shift={(501,160.96)}, rotate = 121.46] [color={rgb, 255:red, 0; green, 0; blue, 0 }  ][fill={rgb, 255:red, 0; green, 0; blue, 0 }  ][line width=0.75]      (0, 0) circle [x radius= 3.35, y radius= 3.35]   ;
\draw [shift={(520,129.9)}, rotate = 121.46] [color={rgb, 255:red, 0; green, 0; blue, 0 }  ][fill={rgb, 255:red, 0; green, 0; blue, 0 }  ][line width=0.75]      (0, 0) circle [x radius= 3.35, y radius= 3.35]   ;
\draw    (440,130.54) -- (460,160.96) ;
\draw [shift={(460,160.96)}, rotate = 56.68] [color={rgb, 255:red, 0; green, 0; blue, 0 }  ][fill={rgb, 255:red, 0; green, 0; blue, 0 }  ][line width=0.75]      (0, 0) circle [x radius= 3.35, y radius= 3.35]   ;
\draw [shift={(440,130.54)}, rotate = 56.68] [color={rgb, 255:red, 0; green, 0; blue, 0 }  ][fill={rgb, 255:red, 0; green, 0; blue, 0 }  ][line width=0.75]      (0, 0) circle [x radius= 3.35, y radius= 3.35]   ;
\draw    (439,129.9) .. controls (441.23,129.16) and (442.72,129.91) .. (443.47,132.14) .. controls (444.21,134.38) and (445.7,135.13) .. (447.94,134.38) .. controls (450.18,133.63) and (451.67,134.38) .. (452.41,136.62) .. controls (453.15,138.86) and (454.64,139.61) .. (456.88,138.86) .. controls (459.12,138.11) and (460.61,138.86) .. (461.35,141.1) .. controls (462.09,143.34) and (463.58,144.09) .. (465.82,143.34) .. controls (468.06,142.59) and (469.55,143.34) .. (470.29,145.58) .. controls (471.03,147.82) and (472.52,148.57) .. (474.76,147.82) .. controls (477,147.07) and (478.49,147.82) .. (479.23,150.06) .. controls (479.97,152.3) and (481.46,153.05) .. (483.7,152.3) .. controls (485.94,151.55) and (487.43,152.3) .. (488.17,154.54) .. controls (488.91,156.78) and (490.4,157.53) .. (492.64,156.78) .. controls (494.88,156.03) and (496.37,156.78) .. (497.12,159.02) -- (501,160.96) -- (501,160.96) ;
\draw    (440,98.21) .. controls (442.16,97.28) and (443.71,97.89) .. (444.65,100.05) .. controls (445.59,102.21) and (447.14,102.82) .. (449.3,101.89) .. controls (451.46,100.96) and (453.01,101.57) .. (453.95,103.73) .. controls (454.88,105.9) and (456.42,106.51) .. (458.59,105.58) .. controls (460.75,104.65) and (462.3,105.26) .. (463.24,107.42) .. controls (464.18,109.58) and (465.73,110.19) .. (467.89,109.26) .. controls (470.05,108.33) and (471.6,108.94) .. (472.54,111.1) .. controls (473.48,113.26) and (475.03,113.87) .. (477.19,112.94) .. controls (479.35,112.01) and (480.9,112.62) .. (481.84,114.78) .. controls (482.77,116.95) and (484.31,117.56) .. (486.48,116.63) .. controls (488.64,115.7) and (490.19,116.31) .. (491.13,118.47) .. controls (492.07,120.63) and (493.62,121.24) .. (495.78,120.31) .. controls (497.94,119.38) and (499.49,119.99) .. (500.43,122.15) .. controls (501.37,124.31) and (502.92,124.92) .. (505.08,123.99) .. controls (507.24,123.06) and (508.79,123.67) .. (509.73,125.83) .. controls (510.66,128) and (512.21,128.61) .. (514.38,127.68) .. controls (516.54,126.75) and (518.09,127.36) .. (519.02,129.52) -- (520,129.9) -- (520,129.9) ;

\draw (96,59.25) node [anchor=north west][inner sep=0.75pt]    {$v$};
\draw (142,97.22) node [anchor=north west][inner sep=0.75pt]    {$x_{0}$};
\draw (63,159.34) node [anchor=north west][inner sep=0.75pt]    {$y_{0}$};
\draw (142,128.91) node [anchor=north west][inner sep=0.75pt]    {$x_{1}$};
\draw (40,121.94) node [anchor=north west][inner sep=0.75pt]    {$y_{2}$};
\draw (123,159.97) node [anchor=north west][inner sep=0.75pt]    {$x_{2}$};
\draw (40,82.64) node [anchor=north west][inner sep=0.75pt]    {$y_{1}$};
\draw (286,59.62) node [anchor=north west][inner sep=0.75pt]    {$v$};
\draw (332,97.22) node [anchor=north west][inner sep=0.75pt]    {$x_{0}$};
\draw (253,159.34) node [anchor=north west][inner sep=0.75pt]    {$y_{0}$};
\draw (332,128.91) node [anchor=north west][inner sep=0.75pt]    {$x_{1}$};
\draw (230,121.94) node [anchor=north west][inner sep=0.75pt]    {$y_{2}$};
\draw (313,159.97) node [anchor=north west][inner sep=0.75pt]    {$x_{2}$};
\draw (230,82.64) node [anchor=north west][inner sep=0.75pt]    {$y_{1}$};
\draw (476,59.18) node [anchor=north west][inner sep=0.75pt]    {$v$};
\draw (522,97.22) node [anchor=north west][inner sep=0.75pt]    {$x_{0}$};
\draw (443,159.34) node [anchor=north west][inner sep=0.75pt]    {$y_{0}$};
\draw (522,128.91) node [anchor=north west][inner sep=0.75pt]    {$x_{1}$};
\draw (420,121.94) node [anchor=north west][inner sep=0.75pt]    {$y_{2}$};
\draw (503,159.97) node [anchor=north west][inner sep=0.75pt]    {$x_{2}$};
\draw (420,82.64) node [anchor=north west][inner sep=0.75pt]    {$y_{1}$};

\end{tikzpicture}
    \caption{A picture of the $v$-absorber $F_5$ and the corresponding $(x_0,y_0)$-paths.}
    \label{fig:1}
\end{figure}
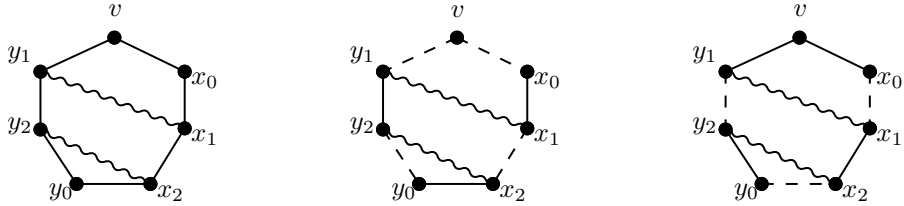
To construct such an absorber, our strategy is to start with an odd cycle and then use \ref{intro:connect} repeatedly to form the paths $P_1,\ldots, P_k$. There are some problems with this approach, though. Firstly, we need to be able to use~\ref{intro:connect} until we construct enough absorbers so that they can deal with the leftover after~\ref{RRS:2}. Moreover,~\ref{intro:connect} has a restriction on how many vertices it can avoid, and so it imposes a restriction on how many absorbers we can construct with this approach. Secondly, the more we rely on property~\ref{intro:connect} to connect vertices, the more we might deteriorate the regularity properties of $\mathrm{Cay}_G(S)$. This is indeed the main problem with this approach, as the number of paths we can find in~\ref{RRS:2} strongly depends on how regular the graph is.  

To find a path forest in $\mathrm{Cay}_G(S)$ in~\ref{RRS:2}, we use results around the Magnant--Martin conjecture~\cite{magnantmartin} about the number of paths needed to partition a $d$-regular graph (conjecturally, $n/d$ paths should be sufficient in any $d$-regular graph). However, after finding our absorbers, the resulting graph might no longer be regular, so we need to tailor these results to work in the approximately regular case as well. This can be easily achieved by adding a few extra vertices and then applying these results for regular graphs, from which one gets that any $n$-vertex $d\pm d'$ regular graph can be partitioned into at most $O(\frac{n\log d}{d}+\frac{nd'}{d})$ paths (see Lemma~\ref{lem:directed:magnant}). This implies that the larger $d'$ is, the more paths we need to cover the graph, which increases the number of absorbers we need to successfully stitch the paths together. Therefore, the crux of this approach is to construct sufficiently many absorbers without significantly affecting the degree of the leftover graph. 

To make this argument work, we will exploit the translation invariance of $\mathrm{Cay}_G(S)$ to construct a random-like absorber. We start with the following observation: for a set $F\subset G$ and an element $g\in G$, the way $\mathrm{Cay}_G(S)$ is defined implies that the graphs induced by $F$ and $gF$ are isomorphic. Thus, if $A$ is an absorber for some $x\in G$ with endpoints $x_0$ and $y_0$, then $gA$ is an absorber for $gx$ with endpoint $gx_0$ and $gy_0$. Using this observation and having constructed a $v$-absorber $A$ with endpoints $x_0$ and $y_0$, we let $g_1,\ldots, g_m$ be elements of $G$ chosen uniformly at random with repetition. Then, for $F=V(A)$, the sets $R_{x_0}=\{g_ix_0:i\in [m]\}$, $R_{y_0}=\{g_iy_0:i\in [m]\}$, $R_v=\{g_iv:i\in [m]\}$ and $R_F=\cup_{i\in [m]} g_iF$ are random-like in the sense that with high probability every element $x\in G$ satisfies $d(x,R_{F})\approx \sigma m|F|$ and $d(x,R_{a})\approx \sigma m$ for $a\in\{v,x_0,y_0\}$ (see Lemma~\ref{lem:randomtranslates}). As there are few overlapping translates $g_iF$ and $g_jF$, $R_F$ is essentially a disjoint union of translates of $A$. Moreover, one can prove that \ref{intro:connect} is robust enough so that the connecting paths have all their internal vertices in $R_v$ (see Lemma~\ref{lem:connecting:translates}). This is~\ref{RRS:1}.

For~\ref{RRS:2} we will use results around the Magnant--Martin conjecture~\cite{christoph2025cycle}. As previously mentioned, in the approximately regular case, the number of paths will be determined by the degree of irregularity, which in our case depends on how the degree deviates from $\sigma m|F|$. After~\ref{RRS:2}, we link up the endpoints of the path-forest and also the absorbers in $R_F$ using property~\ref{intro:connect} through the set $R_v$. To perform this efficiently, we first link up the absorbers using a directed version of the Magnant--Martin conjecture in an auxiliary digraph $D$ defined as follows: the vertex set is $V(D)=[m]$ and we put an edge from $i$ to $j$ whenever $g_iy_0$ is adjacent to $g_jx_0$ in $\mathrm{Cay}_G(S)$. As every vertex has roughly $\sigma m$ neighbours in both $R_{x_0}$ and $R_{y_0}$, the auxiliary digraph $D$ is approximately regular as well and thus we can partition it using few directed paths. Each directed path in $D$ corresponds to a bunch of absorbers joined together so that it can now absorb many vertices at the same time (see Figure~\ref{fig:2} below). These are~\ref{RRS:2} and~\ref{RRS:3}.
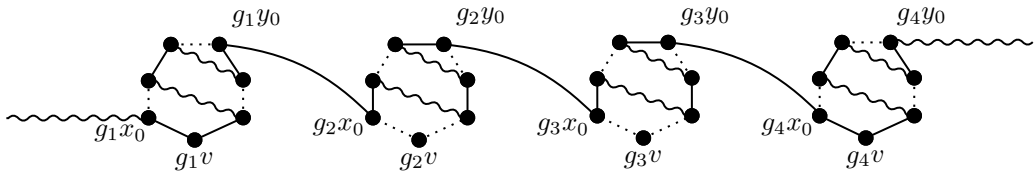
\begin{figure}[h!]
    \centering

\tikzset{every picture/.style={line width=0.75pt}} 

\begin{tikzpicture}[x=0.75pt,y=0.75pt,yscale=-1,xscale=1]

\draw  [dash pattern={on 0.84pt off 2.51pt}]  (174.19,108.27) -- (174.17,89.26) ;
\draw [shift={(174.17,89.26)}, rotate = 269.92] [color={rgb, 255:red, 0; green, 0; blue, 0 }  ][fill={rgb, 255:red, 0; green, 0; blue, 0 }  ][line width=0.75]      (0, 0) circle [x radius= 3.35, y radius= 3.35]   ;
\draw [shift={(174.19,108.27)}, rotate = 269.92] [color={rgb, 255:red, 0; green, 0; blue, 0 }  ][fill={rgb, 255:red, 0; green, 0; blue, 0 }  ][line width=0.75]      (0, 0) circle [x radius= 3.35, y radius= 3.35]   ;
\draw  [dash pattern={on 0.84pt off 2.51pt}]  (126.98,108.33) -- (126.96,89.7) ;
\draw [shift={(126.96,89.7)}, rotate = 269.92] [color={rgb, 255:red, 0; green, 0; blue, 0 }  ][fill={rgb, 255:red, 0; green, 0; blue, 0 }  ][line width=0.75]      (0, 0) circle [x radius= 3.35, y radius= 3.35]   ;
\draw [shift={(126.98,108.33)}, rotate = 269.92] [color={rgb, 255:red, 0; green, 0; blue, 0 }  ][fill={rgb, 255:red, 0; green, 0; blue, 0 }  ][line width=0.75]      (0, 0) circle [x radius= 3.35, y radius= 3.35]   ;
\draw    (150.01,119.48) -- (174.19,108.27) ;
\draw [shift={(174.19,108.27)}, rotate = 335.12] [color={rgb, 255:red, 0; green, 0; blue, 0 }  ][fill={rgb, 255:red, 0; green, 0; blue, 0 }  ][line width=0.75]      (0, 0) circle [x radius= 3.35, y radius= 3.35]   ;
\draw [shift={(150.01,119.48)}, rotate = 335.12] [color={rgb, 255:red, 0; green, 0; blue, 0 }  ][fill={rgb, 255:red, 0; green, 0; blue, 0 }  ][line width=0.75]      (0, 0) circle [x radius= 3.35, y radius= 3.35]   ;
\draw    (150.01,119.48) -- (126.98,108.33) ;
\draw [shift={(126.98,108.33)}, rotate = 205.83] [color={rgb, 255:red, 0; green, 0; blue, 0 }  ][fill={rgb, 255:red, 0; green, 0; blue, 0 }  ][line width=0.75]      (0, 0) circle [x radius= 3.35, y radius= 3.35]   ;
\draw [shift={(150.01,119.48)}, rotate = 205.83] [color={rgb, 255:red, 0; green, 0; blue, 0 }  ][fill={rgb, 255:red, 0; green, 0; blue, 0 }  ][line width=0.75]      (0, 0) circle [x radius= 3.35, y radius= 3.35]   ;
\draw  [dash pattern={on 0.84pt off 2.51pt}]  (162.34,71.39) -- (138.14,71.42) ;
\draw [shift={(138.14,71.42)}, rotate = 179.92] [color={rgb, 255:red, 0; green, 0; blue, 0 }  ][fill={rgb, 255:red, 0; green, 0; blue, 0 }  ][line width=0.75]      (0, 0) circle [x radius= 3.35, y radius= 3.35]   ;
\draw [shift={(162.34,71.39)}, rotate = 179.92] [color={rgb, 255:red, 0; green, 0; blue, 0 }  ][fill={rgb, 255:red, 0; green, 0; blue, 0 }  ][line width=0.75]      (0, 0) circle [x radius= 3.35, y radius= 3.35]   ;
\draw    (174.19,108.27) .. controls (172.03,109.21) and (170.48,108.6) .. (169.54,106.44) .. controls (168.6,104.28) and (167.05,103.67) .. (164.89,104.61) .. controls (162.73,105.55) and (161.18,104.94) .. (160.23,102.78) .. controls (159.29,100.62) and (157.74,100.01) .. (155.58,100.95) .. controls (153.42,101.89) and (151.87,101.28) .. (150.93,99.12) .. controls (149.98,96.96) and (148.43,96.35) .. (146.27,97.29) .. controls (144.11,98.23) and (142.56,97.62) .. (141.62,95.46) .. controls (140.68,93.3) and (139.13,92.69) .. (136.97,93.63) .. controls (134.81,94.58) and (133.26,93.97) .. (132.31,91.81) .. controls (131.37,89.65) and (129.82,89.04) .. (127.66,89.98) -- (126.96,89.7) -- (126.96,89.7) ;
\draw    (126.96,89.7) -- (138.14,71.42) ;
\draw [shift={(138.14,71.42)}, rotate = 301.47] [color={rgb, 255:red, 0; green, 0; blue, 0 }  ][fill={rgb, 255:red, 0; green, 0; blue, 0 }  ][line width=0.75]      (0, 0) circle [x radius= 3.35, y radius= 3.35]   ;
\draw [shift={(126.96,89.7)}, rotate = 301.47] [color={rgb, 255:red, 0; green, 0; blue, 0 }  ][fill={rgb, 255:red, 0; green, 0; blue, 0 }  ][line width=0.75]      (0, 0) circle [x radius= 3.35, y radius= 3.35]   ;
\draw    (174.17,89.26) -- (162.34,71.39) ;
\draw [shift={(162.34,71.39)}, rotate = 236.5] [color={rgb, 255:red, 0; green, 0; blue, 0 }  ][fill={rgb, 255:red, 0; green, 0; blue, 0 }  ][line width=0.75]      (0, 0) circle [x radius= 3.35, y radius= 3.35]   ;
\draw [shift={(174.17,89.26)}, rotate = 236.5] [color={rgb, 255:red, 0; green, 0; blue, 0 }  ][fill={rgb, 255:red, 0; green, 0; blue, 0 }  ][line width=0.75]      (0, 0) circle [x radius= 3.35, y radius= 3.35]   ;
\draw    (174.17,89.26) .. controls (171.93,90.01) and (170.44,89.26) .. (169.69,87.03) .. controls (168.95,84.79) and (167.46,84.04) .. (165.22,84.79) .. controls (162.99,85.54) and (161.5,84.79) .. (160.75,82.56) .. controls (160,80.33) and (158.5,79.58) .. (156.27,80.33) .. controls (154.03,81.08) and (152.54,80.33) .. (151.8,78.09) .. controls (151.05,75.86) and (149.56,75.11) .. (147.33,75.86) .. controls (145.1,76.61) and (143.6,75.86) .. (142.85,73.63) -- (138.44,71.42) -- (138.44,71.42) ;
\draw    (286.19,108.27) -- (286.17,89.26) ;
\draw [shift={(286.17,89.26)}, rotate = 269.92] [color={rgb, 255:red, 0; green, 0; blue, 0 }  ][fill={rgb, 255:red, 0; green, 0; blue, 0 }  ][line width=0.75]      (0, 0) circle [x radius= 3.35, y radius= 3.35]   ;
\draw [shift={(286.19,108.27)}, rotate = 269.92] [color={rgb, 255:red, 0; green, 0; blue, 0 }  ][fill={rgb, 255:red, 0; green, 0; blue, 0 }  ][line width=0.75]      (0, 0) circle [x radius= 3.35, y radius= 3.35]   ;
\draw    (238.98,108.33) -- (238.96,89.7) ;
\draw [shift={(238.96,89.7)}, rotate = 269.92] [color={rgb, 255:red, 0; green, 0; blue, 0 }  ][fill={rgb, 255:red, 0; green, 0; blue, 0 }  ][line width=0.75]      (0, 0) circle [x radius= 3.35, y radius= 3.35]   ;
\draw [shift={(238.98,108.33)}, rotate = 269.92] [color={rgb, 255:red, 0; green, 0; blue, 0 }  ][fill={rgb, 255:red, 0; green, 0; blue, 0 }  ][line width=0.75]      (0, 0) circle [x radius= 3.35, y radius= 3.35]   ;
\draw  [dash pattern={on 0.84pt off 2.51pt}]  (262.01,119.48) -- (286.19,108.27) ;
\draw [shift={(286.19,108.27)}, rotate = 335.12] [color={rgb, 255:red, 0; green, 0; blue, 0 }  ][fill={rgb, 255:red, 0; green, 0; blue, 0 }  ][line width=0.75]      (0, 0) circle [x radius= 3.35, y radius= 3.35]   ;
\draw [shift={(262.01,119.48)}, rotate = 335.12] [color={rgb, 255:red, 0; green, 0; blue, 0 }  ][fill={rgb, 255:red, 0; green, 0; blue, 0 }  ][line width=0.75]      (0, 0) circle [x radius= 3.35, y radius= 3.35]   ;
\draw  [dash pattern={on 0.84pt off 2.51pt}]  (262.01,119.48) -- (238.98,108.33) ;
\draw [shift={(238.98,108.33)}, rotate = 205.83] [color={rgb, 255:red, 0; green, 0; blue, 0 }  ][fill={rgb, 255:red, 0; green, 0; blue, 0 }  ][line width=0.75]      (0, 0) circle [x radius= 3.35, y radius= 3.35]   ;
\draw [shift={(262.01,119.48)}, rotate = 205.83] [color={rgb, 255:red, 0; green, 0; blue, 0 }  ][fill={rgb, 255:red, 0; green, 0; blue, 0 }  ][line width=0.75]      (0, 0) circle [x radius= 3.35, y radius= 3.35]   ;
\draw    (274.34,71.39) -- (250.14,71.42) ;
\draw [shift={(250.14,71.42)}, rotate = 179.92] [color={rgb, 255:red, 0; green, 0; blue, 0 }  ][fill={rgb, 255:red, 0; green, 0; blue, 0 }  ][line width=0.75]      (0, 0) circle [x radius= 3.35, y radius= 3.35]   ;
\draw [shift={(274.34,71.39)}, rotate = 179.92] [color={rgb, 255:red, 0; green, 0; blue, 0 }  ][fill={rgb, 255:red, 0; green, 0; blue, 0 }  ][line width=0.75]      (0, 0) circle [x radius= 3.35, y radius= 3.35]   ;
\draw    (286.19,108.27) .. controls (284.03,109.21) and (282.48,108.6) .. (281.54,106.44) .. controls (280.6,104.28) and (279.05,103.67) .. (276.89,104.61) .. controls (274.73,105.55) and (273.18,104.94) .. (272.23,102.78) .. controls (271.29,100.62) and (269.74,100.01) .. (267.58,100.95) .. controls (265.42,101.89) and (263.87,101.28) .. (262.93,99.12) .. controls (261.98,96.96) and (260.43,96.35) .. (258.27,97.29) .. controls (256.11,98.23) and (254.56,97.62) .. (253.62,95.46) .. controls (252.68,93.3) and (251.13,92.69) .. (248.97,93.63) .. controls (246.81,94.58) and (245.26,93.97) .. (244.31,91.81) .. controls (243.37,89.65) and (241.82,89.04) .. (239.66,89.98) -- (238.96,89.7) -- (238.96,89.7) ;
\draw  [dash pattern={on 0.84pt off 2.51pt}]  (238.96,89.7) -- (250.14,71.42) ;
\draw [shift={(250.14,71.42)}, rotate = 301.47] [color={rgb, 255:red, 0; green, 0; blue, 0 }  ][fill={rgb, 255:red, 0; green, 0; blue, 0 }  ][line width=0.75]      (0, 0) circle [x radius= 3.35, y radius= 3.35]   ;
\draw [shift={(238.96,89.7)}, rotate = 301.47] [color={rgb, 255:red, 0; green, 0; blue, 0 }  ][fill={rgb, 255:red, 0; green, 0; blue, 0 }  ][line width=0.75]      (0, 0) circle [x radius= 3.35, y radius= 3.35]   ;
\draw  [dash pattern={on 0.84pt off 2.51pt}]  (286.17,89.26) -- (274.34,71.39) ;
\draw [shift={(274.34,71.39)}, rotate = 236.5] [color={rgb, 255:red, 0; green, 0; blue, 0 }  ][fill={rgb, 255:red, 0; green, 0; blue, 0 }  ][line width=0.75]      (0, 0) circle [x radius= 3.35, y radius= 3.35]   ;
\draw [shift={(286.17,89.26)}, rotate = 236.5] [color={rgb, 255:red, 0; green, 0; blue, 0 }  ][fill={rgb, 255:red, 0; green, 0; blue, 0 }  ][line width=0.75]      (0, 0) circle [x radius= 3.35, y radius= 3.35]   ;
\draw    (286.17,89.26) .. controls (283.93,90.01) and (282.44,89.26) .. (281.69,87.03) .. controls (280.95,84.79) and (279.46,84.04) .. (277.22,84.79) .. controls (274.99,85.54) and (273.5,84.79) .. (272.75,82.56) .. controls (272,80.33) and (270.5,79.58) .. (268.27,80.33) .. controls (266.03,81.08) and (264.54,80.33) .. (263.8,78.09) .. controls (263.05,75.86) and (261.56,75.11) .. (259.33,75.86) .. controls (257.1,76.61) and (255.6,75.86) .. (254.85,73.63) -- (250.44,71.42) -- (250.44,71.42) ;
\draw    (398.19,107.27) -- (398.17,88.26) ;
\draw [shift={(398.17,88.26)}, rotate = 269.92] [color={rgb, 255:red, 0; green, 0; blue, 0 }  ][fill={rgb, 255:red, 0; green, 0; blue, 0 }  ][line width=0.75]      (0, 0) circle [x radius= 3.35, y radius= 3.35]   ;
\draw [shift={(398.19,107.27)}, rotate = 269.92] [color={rgb, 255:red, 0; green, 0; blue, 0 }  ][fill={rgb, 255:red, 0; green, 0; blue, 0 }  ][line width=0.75]      (0, 0) circle [x radius= 3.35, y radius= 3.35]   ;
\draw    (350.98,107.33) -- (350.96,88.7) ;
\draw [shift={(350.96,88.7)}, rotate = 269.92] [color={rgb, 255:red, 0; green, 0; blue, 0 }  ][fill={rgb, 255:red, 0; green, 0; blue, 0 }  ][line width=0.75]      (0, 0) circle [x radius= 3.35, y radius= 3.35]   ;
\draw [shift={(350.98,107.33)}, rotate = 269.92] [color={rgb, 255:red, 0; green, 0; blue, 0 }  ][fill={rgb, 255:red, 0; green, 0; blue, 0 }  ][line width=0.75]      (0, 0) circle [x radius= 3.35, y radius= 3.35]   ;
\draw  [dash pattern={on 0.84pt off 2.51pt}]  (374.01,118.48) -- (398.19,107.27) ;
\draw [shift={(398.19,107.27)}, rotate = 335.12] [color={rgb, 255:red, 0; green, 0; blue, 0 }  ][fill={rgb, 255:red, 0; green, 0; blue, 0 }  ][line width=0.75]      (0, 0) circle [x radius= 3.35, y radius= 3.35]   ;
\draw [shift={(374.01,118.48)}, rotate = 335.12] [color={rgb, 255:red, 0; green, 0; blue, 0 }  ][fill={rgb, 255:red, 0; green, 0; blue, 0 }  ][line width=0.75]      (0, 0) circle [x radius= 3.35, y radius= 3.35]   ;
\draw  [dash pattern={on 0.84pt off 2.51pt}]  (374.01,118.48) -- (350.98,107.33) ;
\draw [shift={(350.98,107.33)}, rotate = 205.83] [color={rgb, 255:red, 0; green, 0; blue, 0 }  ][fill={rgb, 255:red, 0; green, 0; blue, 0 }  ][line width=0.75]      (0, 0) circle [x radius= 3.35, y radius= 3.35]   ;
\draw [shift={(374.01,118.48)}, rotate = 205.83] [color={rgb, 255:red, 0; green, 0; blue, 0 }  ][fill={rgb, 255:red, 0; green, 0; blue, 0 }  ][line width=0.75]      (0, 0) circle [x radius= 3.35, y radius= 3.35]   ;
\draw    (386.34,70.39) -- (362.14,70.42) ;
\draw [shift={(362.14,70.42)}, rotate = 179.92] [color={rgb, 255:red, 0; green, 0; blue, 0 }  ][fill={rgb, 255:red, 0; green, 0; blue, 0 }  ][line width=0.75]      (0, 0) circle [x radius= 3.35, y radius= 3.35]   ;
\draw [shift={(386.34,70.39)}, rotate = 179.92] [color={rgb, 255:red, 0; green, 0; blue, 0 }  ][fill={rgb, 255:red, 0; green, 0; blue, 0 }  ][line width=0.75]      (0, 0) circle [x radius= 3.35, y radius= 3.35]   ;
\draw    (398.19,107.27) .. controls (396.03,108.21) and (394.48,107.6) .. (393.54,105.44) .. controls (392.6,103.28) and (391.05,102.67) .. (388.89,103.61) .. controls (386.73,104.55) and (385.18,103.94) .. (384.23,101.78) .. controls (383.29,99.62) and (381.74,99.01) .. (379.58,99.95) .. controls (377.42,100.89) and (375.87,100.28) .. (374.93,98.12) .. controls (373.98,95.96) and (372.43,95.35) .. (370.27,96.29) .. controls (368.11,97.23) and (366.56,96.62) .. (365.62,94.46) .. controls (364.68,92.3) and (363.13,91.69) .. (360.97,92.63) .. controls (358.81,93.58) and (357.26,92.97) .. (356.31,90.81) .. controls (355.37,88.65) and (353.82,88.04) .. (351.66,88.98) -- (350.96,88.7) -- (350.96,88.7) ;
\draw  [dash pattern={on 0.84pt off 2.51pt}]  (350.96,88.7) -- (362.14,70.42) ;
\draw [shift={(362.14,70.42)}, rotate = 301.47] [color={rgb, 255:red, 0; green, 0; blue, 0 }  ][fill={rgb, 255:red, 0; green, 0; blue, 0 }  ][line width=0.75]      (0, 0) circle [x radius= 3.35, y radius= 3.35]   ;
\draw [shift={(350.96,88.7)}, rotate = 301.47] [color={rgb, 255:red, 0; green, 0; blue, 0 }  ][fill={rgb, 255:red, 0; green, 0; blue, 0 }  ][line width=0.75]      (0, 0) circle [x radius= 3.35, y radius= 3.35]   ;
\draw  [dash pattern={on 0.84pt off 2.51pt}]  (398.17,88.26) -- (386.34,70.39) ;
\draw [shift={(386.34,70.39)}, rotate = 236.5] [color={rgb, 255:red, 0; green, 0; blue, 0 }  ][fill={rgb, 255:red, 0; green, 0; blue, 0 }  ][line width=0.75]      (0, 0) circle [x radius= 3.35, y radius= 3.35]   ;
\draw [shift={(398.17,88.26)}, rotate = 236.5] [color={rgb, 255:red, 0; green, 0; blue, 0 }  ][fill={rgb, 255:red, 0; green, 0; blue, 0 }  ][line width=0.75]      (0, 0) circle [x radius= 3.35, y radius= 3.35]   ;
\draw    (398.17,88.26) .. controls (395.93,89.01) and (394.44,88.26) .. (393.69,86.03) .. controls (392.95,83.79) and (391.46,83.04) .. (389.22,83.79) .. controls (386.99,84.54) and (385.5,83.79) .. (384.75,81.56) .. controls (384,79.33) and (382.5,78.58) .. (380.27,79.33) .. controls (378.03,80.08) and (376.54,79.33) .. (375.8,77.09) .. controls (375.05,74.86) and (373.56,74.11) .. (371.33,74.86) .. controls (369.1,75.61) and (367.6,74.86) .. (366.85,72.63) -- (362.44,70.42) -- (362.44,70.42) ;
\draw  [dash pattern={on 0.84pt off 2.51pt}]  (509.19,107.27) -- (509.17,88.26) ;
\draw [shift={(509.17,88.26)}, rotate = 269.92] [color={rgb, 255:red, 0; green, 0; blue, 0 }  ][fill={rgb, 255:red, 0; green, 0; blue, 0 }  ][line width=0.75]      (0, 0) circle [x radius= 3.35, y radius= 3.35]   ;
\draw [shift={(509.19,107.27)}, rotate = 269.92] [color={rgb, 255:red, 0; green, 0; blue, 0 }  ][fill={rgb, 255:red, 0; green, 0; blue, 0 }  ][line width=0.75]      (0, 0) circle [x radius= 3.35, y radius= 3.35]   ;
\draw  [dash pattern={on 0.84pt off 2.51pt}]  (461.98,107.33) -- (461.96,88.7) ;
\draw [shift={(461.96,88.7)}, rotate = 269.92] [color={rgb, 255:red, 0; green, 0; blue, 0 }  ][fill={rgb, 255:red, 0; green, 0; blue, 0 }  ][line width=0.75]      (0, 0) circle [x radius= 3.35, y radius= 3.35]   ;
\draw [shift={(461.98,107.33)}, rotate = 269.92] [color={rgb, 255:red, 0; green, 0; blue, 0 }  ][fill={rgb, 255:red, 0; green, 0; blue, 0 }  ][line width=0.75]      (0, 0) circle [x radius= 3.35, y radius= 3.35]   ;
\draw    (485.01,118.48) -- (509.19,107.27) ;
\draw [shift={(509.19,107.27)}, rotate = 335.12] [color={rgb, 255:red, 0; green, 0; blue, 0 }  ][fill={rgb, 255:red, 0; green, 0; blue, 0 }  ][line width=0.75]      (0, 0) circle [x radius= 3.35, y radius= 3.35]   ;
\draw [shift={(485.01,118.48)}, rotate = 335.12] [color={rgb, 255:red, 0; green, 0; blue, 0 }  ][fill={rgb, 255:red, 0; green, 0; blue, 0 }  ][line width=0.75]      (0, 0) circle [x radius= 3.35, y radius= 3.35]   ;
\draw    (485.01,118.48) -- (461.98,107.33) ;
\draw [shift={(461.98,107.33)}, rotate = 205.83] [color={rgb, 255:red, 0; green, 0; blue, 0 }  ][fill={rgb, 255:red, 0; green, 0; blue, 0 }  ][line width=0.75]      (0, 0) circle [x radius= 3.35, y radius= 3.35]   ;
\draw [shift={(485.01,118.48)}, rotate = 205.83] [color={rgb, 255:red, 0; green, 0; blue, 0 }  ][fill={rgb, 255:red, 0; green, 0; blue, 0 }  ][line width=0.75]      (0, 0) circle [x radius= 3.35, y radius= 3.35]   ;
\draw  [dash pattern={on 0.84pt off 2.51pt}]  (497.34,70.39) -- (473.14,70.42) ;
\draw [shift={(473.14,70.42)}, rotate = 179.92] [color={rgb, 255:red, 0; green, 0; blue, 0 }  ][fill={rgb, 255:red, 0; green, 0; blue, 0 }  ][line width=0.75]      (0, 0) circle [x radius= 3.35, y radius= 3.35]   ;
\draw [shift={(497.34,70.39)}, rotate = 179.92] [color={rgb, 255:red, 0; green, 0; blue, 0 }  ][fill={rgb, 255:red, 0; green, 0; blue, 0 }  ][line width=0.75]      (0, 0) circle [x radius= 3.35, y radius= 3.35]   ;
\draw    (509.19,107.27) .. controls (507.03,108.21) and (505.48,107.6) .. (504.54,105.44) .. controls (503.6,103.28) and (502.05,102.67) .. (499.89,103.61) .. controls (497.73,104.55) and (496.18,103.94) .. (495.23,101.78) .. controls (494.29,99.62) and (492.74,99.01) .. (490.58,99.95) .. controls (488.42,100.89) and (486.87,100.28) .. (485.93,98.12) .. controls (484.98,95.96) and (483.43,95.35) .. (481.27,96.29) .. controls (479.11,97.23) and (477.56,96.62) .. (476.62,94.46) .. controls (475.68,92.3) and (474.13,91.69) .. (471.97,92.63) .. controls (469.81,93.58) and (468.26,92.97) .. (467.31,90.81) .. controls (466.37,88.65) and (464.82,88.04) .. (462.66,88.98) -- (461.96,88.7) -- (461.96,88.7) ;
\draw    (461.96,88.7) -- (473.14,70.42) ;
\draw [shift={(473.14,70.42)}, rotate = 301.47] [color={rgb, 255:red, 0; green, 0; blue, 0 }  ][fill={rgb, 255:red, 0; green, 0; blue, 0 }  ][line width=0.75]      (0, 0) circle [x radius= 3.35, y radius= 3.35]   ;
\draw [shift={(461.96,88.7)}, rotate = 301.47] [color={rgb, 255:red, 0; green, 0; blue, 0 }  ][fill={rgb, 255:red, 0; green, 0; blue, 0 }  ][line width=0.75]      (0, 0) circle [x radius= 3.35, y radius= 3.35]   ;
\draw    (509.17,88.26) -- (497.34,70.39) ;
\draw [shift={(497.34,70.39)}, rotate = 236.5] [color={rgb, 255:red, 0; green, 0; blue, 0 }  ][fill={rgb, 255:red, 0; green, 0; blue, 0 }  ][line width=0.75]      (0, 0) circle [x radius= 3.35, y radius= 3.35]   ;
\draw [shift={(509.17,88.26)}, rotate = 236.5] [color={rgb, 255:red, 0; green, 0; blue, 0 }  ][fill={rgb, 255:red, 0; green, 0; blue, 0 }  ][line width=0.75]      (0, 0) circle [x radius= 3.35, y radius= 3.35]   ;
\draw    (509.17,88.26) .. controls (506.93,89.01) and (505.44,88.26) .. (504.69,86.03) .. controls (503.95,83.79) and (502.46,83.04) .. (500.22,83.79) .. controls (497.99,84.54) and (496.5,83.79) .. (495.75,81.56) .. controls (495,79.33) and (493.5,78.58) .. (491.27,79.33) .. controls (489.03,80.08) and (487.54,79.33) .. (486.8,77.09) .. controls (486.05,74.86) and (484.56,74.11) .. (482.33,74.86) .. controls (480.1,75.61) and (478.6,74.86) .. (477.85,72.63) -- (473.44,70.42) -- (473.44,70.42) ;
\draw    (162.34,71.39) .. controls (198,75) and (218,87) .. (238.98,108.33) ;
\draw    (274.34,71.39) .. controls (310,75) and (330,87) .. (350.98,108.33) ;
\draw    (386.34,70.39) .. controls (422,74) and (442,86) .. (462.98,107.33) ;
\draw    (126.98,108.33) .. controls (125.31,110) and (123.65,110.01) .. (121.98,108.34) .. controls (120.31,106.68) and (118.64,106.69) .. (116.98,108.36) .. controls (115.31,110.03) and (113.65,110.03) .. (111.98,108.37) .. controls (110.31,106.71) and (108.65,106.71) .. (106.98,108.38) .. controls (105.31,110.05) and (103.65,110.05) .. (101.98,108.39) .. controls (100.31,106.73) and (98.65,106.73) .. (96.98,108.4) .. controls (95.32,110.07) and (93.65,110.08) .. (91.98,108.42) .. controls (90.31,106.76) and (88.65,106.76) .. (86.98,108.43) .. controls (85.31,110.1) and (83.65,110.1) .. (81.98,108.44) .. controls (80.31,106.78) and (78.65,106.78) .. (76.98,108.45) .. controls (75.31,110.12) and (73.65,110.12) .. (71.98,108.46) .. controls (70.31,106.8) and (68.65,106.8) .. (66.98,108.47) .. controls (65.32,110.14) and (63.65,110.15) .. (61.98,108.49) .. controls (60.31,106.83) and (58.65,106.83) .. (56.98,108.5) -- (56,108.5) -- (56,108.5) ;
\draw    (568.32,70.22) .. controls (566.65,71.89) and (564.99,71.9) .. (563.32,70.23) .. controls (561.65,68.57) and (559.98,68.58) .. (558.32,70.25) .. controls (556.65,71.92) and (554.99,71.92) .. (553.32,70.26) .. controls (551.65,68.6) and (549.99,68.6) .. (548.32,70.27) .. controls (546.65,71.94) and (544.99,71.94) .. (543.32,70.28) .. controls (541.65,68.62) and (539.99,68.62) .. (538.32,70.29) .. controls (536.66,71.96) and (534.99,71.97) .. (533.32,70.31) .. controls (531.65,68.65) and (529.99,68.65) .. (528.32,70.32) .. controls (526.65,71.99) and (524.99,71.99) .. (523.32,70.33) .. controls (521.65,68.67) and (519.99,68.67) .. (518.32,70.34) .. controls (516.65,72.01) and (514.99,72.01) .. (513.32,70.35) .. controls (511.65,68.69) and (509.99,68.69) .. (508.32,70.36) .. controls (506.66,72.03) and (504.99,72.04) .. (503.32,70.38) .. controls (501.65,68.72) and (499.99,68.72) .. (498.32,70.39) -- (497.34,70.39) -- (497.34,70.39) ;

\draw (138.58,124.65) node [anchor=north west][inner sep=0.75pt]  [rotate=-359.39]  {$g_{1} v$};
\draw (98.64,110.13) node [anchor=north west][inner sep=0.75pt]  [rotate=-0.98]  {$g_{1} x_{0}$};
\draw (166.92,52.82) node [anchor=north west][inner sep=0.75pt]  [rotate=-359.14]  {$g_{1} y_{0}$};
\draw (250.58,124.65) node [anchor=north west][inner sep=0.75pt]  [rotate=-359.39]  {$g_{2} v$};
\draw (207.64,107.13) node [anchor=north west][inner sep=0.75pt]  [rotate=-0.98]  {$g_{2} x_{0}$};
\draw (278.92,51.82) node [anchor=north west][inner sep=0.75pt]  [rotate=-359.14]  {$g_{2} y_{0}$};
\draw (362.58,123.65) node [anchor=north west][inner sep=0.75pt]  [rotate=-359.39]  {$g_{3} v$};
\draw (319.64,107.13) node [anchor=north west][inner sep=0.75pt]  [rotate=-0.98]  {$g_{3} x_{0}$};
\draw (390.92,51.82) node [anchor=north west][inner sep=0.75pt]  [rotate=-359.14]  {$g_{3} y_{0}$};
\draw (473.58,123.65) node [anchor=north west][inner sep=0.75pt]  [rotate=-359.39]  {$g_{4} v$};
\draw (431.64,107.13) node [anchor=north west][inner sep=0.75pt]  [rotate=-0.98]  {$g_{4} x_{0}$};
\draw (498.92,51.82) node [anchor=north west][inner sep=0.75pt]  [rotate=-359.14]  {$g_{4} y_{0}$};
\end{tikzpicture}
    \caption{This picture shows the path $1234$ in the auxiliary digraph, which then corresponds to a path of absorbers in the original graph. These absorbers combined can incorporate any subset of $\{g_iv:i\in [4]\}$ to the final path. In this picture, a path is shown incorporating $g_1v$ and $g_4v$ while avoiding $g_2v$ and $g_3v$.}
    \label{fig:2}
\end{figure}

Up to this point, we have a path $P$ in $\mathrm{Cay}_G(S)$ such that all the non-covered elements belong to $R_v=\{g_iv:i\in [m]\}$. As each $g_iv$ can be incorporated into $P$ using the absorber $g_iA$ (see Figure~\ref{fig:1}), we can incorporate every leftover element into the path, thus finding a Hamiltonian path in $\mathrm{Cay}_G(S)$. For the general case, when we want to rather find $O(n^\varepsilon)$ vertex-disjoint paths with specified endpoints, the proof is almost identical, the only difference being that we have to connect the endpoints appropriately to the path forest. 

\section{Preliminaries}
\subsection{Probabilistic tools}
\begin{lemma}[Chernoff's Bound~{\cite{JLR2000}}]\label{lemma:chernoff}
Let $X$ be either a binomial random variable or a hypergeometric random variable. Then, for all $0<\eps\le 3/2$,
\[\mathbb{P}\left(\big|X-\mathbb E[X]\big|\ge \eps\mathbb E[X]\right)\le 2\exp(-\eps^2\mathbb{E}[X]/3).\]
\end{lemma}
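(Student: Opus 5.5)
The plan is the standard exponential moment method, done first for the binomial case and then transferred to the hypergeometric case by a convexity comparison. Write $\mu=\mathbb E[X]$ and treat the two tails separately, each with the bound $\exp(-\eps^2\mu/3)$; a union bound over the two tails then gives the factor $2$.

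\textbf{Binomial case.} Let $X=\sum_{i=1}^N X_i$ with independent $X_i\sim\mathrm{Bernoulli}(p)$, so that $\mu=Np$.
\begin{itemize}
\item[(a)] For $\lambda>0$, Markov's inequality applied to $e^{\lambda X}$ gives $\mathbb P(X\ge(1+\eps)\mu)\le e^{-\lambda(1+\eps)\mu}\,\mathbb E[e^{\lambda X}]$.
\item[(b)] By independence and $1+x\le e^x$,
\[\mathbb E[e^{\lambda X}]=\prod_{i}\bigl(1+p(e^\lambda-1)\bigr)\le \exp\bigl(\mu(e^\lambda-1)\bigr).\]
\item[(c)] Choosing $\lambda=\log(1+\eps)$ yields the upper-tail bound $\exp(-\mu\,\varphi(\eps))$, where $\varphi(\eps)=(1+\eps)\log(1+\eps)-\eps$.
\item[(d)] The analogous computation with $e^{-\lambda X}$ and $\lambda=-\log(1-\eps)$ gives, for $\eps<1$, the lower-tail bound $\exp(-\mu\,\varphi(-\eps))$. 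For $\eps\ge 1$ the lower tail is trivial, since $X\ge 0$ and $\mathbb P(X\le 0)=(1-p)^N\le e^{-\mu}\le e^{-\eps^2\mu/3}$ when $\eps\le\sqrt3$.
\end{itemize}

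\textbf{The elementary inequalities.} It remains to verify $\varphi(\eps)\ge\eps^2/3$ for $0<\eps\le 3/2$ and $\varphi(-\eps)\ge\eps^2/2\ge\eps^2/3$ for $0<\eps<1$.
\begin{itemize}
\item For the lower tail, $\varphi(-\eps)=\sum_{k\ge2}\eps^k/(k(k-1))\ge\eps^2/2$ directly from the Taylor series.
\item For the upper tail, set $h(\eps)=\varphi(\eps)-\eps^2/3$. Then $h(0)=h'(0)=0$ and $h''(\eps)=\frac{1}{1+\eps}-\frac23$, which is positive for $\eps<1/2$ and negative afterwards. So $h'$ rises and then falls, and $h$ is nonnegative on $[0,3/2]$ provided $h(3/2)\ge 0$.
\item The check at the endpoint is $h(3/2)=2.5\log 2.5-1.5-0.75\approx 0.041>0$. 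This endpoint computation is the only place where the constant $3/2$ in the statement matters, and it is where I expect the one delicate numerical step.
\end{itemize}

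\textbf{Hypergeometric case.} Let $X$ count the marked elements in a uniformly random $N$-subset of a population containing a $p$-fraction of marked elements, and let $Y\sim\mathrm{Bin}(N,p)$, so that $\mathbb E X=\mathbb E Y=\mu$.
\begin{itemize}
\item I would invoke Hoeffding's classical comparison: for every convex continuous $f$, sampling without replacement is dominated by sampling with replacement, $\mathbb E f(X)\le\mathbb E f(Y)$.
\item To prove it, realise the with-replacement sample by drawing indices with replacement and conditioning on the set of distinct elements drawn. Conditional on that set, the sum $Y$ is a positive combination of the distinct elements with total weight $N$. Averaging over permutations shows that $X$ is a conditional expectation of $Y$ (a martingale coupling), and Jensen's inequality then gives the comparison.
\item Applying this with $f(x)=e^{\pm\lambda x}$ gives exactly the same moment generating function bounds as in (b). Hence the Markov steps (a) and (d) and the optimisation in (c) carry over verbatim, and the hypergeometric case satisfies the same bound.
\end{itemize}

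The main obstacle is this coupling for the hypergeometric case, which is where I would spend the care; everything else is routine calculus.
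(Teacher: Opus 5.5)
Your proof is correct and follows the standard argument behind the cited Janson--{\L}uczak--Ruci\'nski result; the paper gives no proof of its own, only that citation. The reference also bounds the tails by $\exp(-\mu\varphi(\pm\eps))$ via exponential moments and transfers to the hypergeometric case by Hoeffding's comparison $\mathbb E f(X)\le\mathbb E f(Y)$ for convex $f$. The one difference is that it uses $\varphi(x)\ge x^2/(2(1+x/3))$, which explains the threshold $3/2$ (where $2(1+\eps/3)=3$), whereas you check unimodality and the endpoint directly. Both work.

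One detail in your coupling needs fixing. Write $c_d$ for the indicator that $d$ is marked. If you condition only on the set $D$ of distinct drawn indices, symmetry gives $\mathbb E[Y\mid D]=\frac{N}{|D|}\sum_{d\in D}c_d$, which is not $X$ when $|D|<N$. Instead, list $D$ in order of first appearance and extend it to a uniformly random injective sequence of length $N$, whose underlying set $T\supseteq D$ is independent of the multiplicity pattern. The multiplicities, set to zero on unused positions, sum to $N$. So averaging over the uniformly random ordering of $T$ gives $\mathbb E[Y\mid T]=\sum_{t\in T}c_t$, which is exactly a without-replacement sum, and conditional Jensen then yields the comparison.
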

\begin{lemma}[McDiarmid's Inequality \cite{McDiarmid1989BoundedDifferences}]\label{lemma:mcdiarmid}
Let $X_1,\ldots,X_m$ be independent random variables taking values in a set $\Omega$. Let $c_1,\ldots,c_m\geq 0$ and suppose $f:\Omega^m\to\mathbb{R}$ is a function such that for every $i\in[m]$ and every $x_1,\ldots,x_m,x_i'\in\Omega$, we have $\Big|f(x_1,\ldots,x_i,\ldots,x_m)-f(x_1,\ldots,x_i',\ldots,x_m)\Big|\leq c_i$.
Then, for all $t>0$,
\[\mathbb{P}\left(\Big|f(X_1,\ldots,X_m)-\mathbb{E}[f(X_1,\ldots,X_m)]\Big|\geq t\right)\leq2\exp\left(\frac{-2t^2}{\sum_{i=1}^mc_i^2}\right).\]
\end{lemma}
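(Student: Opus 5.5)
This lemma is a standard concentration inequality quoted from the literature. The natural plan is the martingale (Azuma–Hoeffding) argument, which I would only sketch here.

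Set $Z_i=\mathbb E[f(X_1,\dots,X_m)\mid X_1,\dots,X_i]$ for $0\le i\le m$. This gives a Doob martingale with $Z_0=\mathbb E f$ and $Z_m=f(X_1,\ldots,X_m)$, and we write $D_i=Z_i-Z_{i-1}$.

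The first step is to show that each $D_i$, conditioned on $X_1,\dots,X_{i-1}$, lies in an interval of length at most $c_i$. By independence,
\[Z_i=\int f(X_1,\dots,X_i,x_{i+1},\dots,x_m)\,d\mu_{i+1}\cdots d\mu_m,\]
where $\mu_j$ denotes the law of $X_j$. Define $L_i$ and $U_i$ as the infimum and supremum of this expression over the value of $X_i$. Both are measurable with respect to $X_1,\dots,X_{i-1}$, and $L_i\le D_i+Z_{i-1}\le U_i$. The bounded-differences hypothesis, applied inside the integral, gives $U_i-L_i\le c_i$.

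The second step applies Hoeffding's lemma. A mean-zero variable supported in an interval of length $c$ satisfies $\mathbb E e^{\lambda D}\le e^{\lambda^2c^2/8}$. Iterating conditional expectations from $i=m$ down to $i=1$ gives
\[\mathbb E e^{\lambda(Z_m-Z_0)}\le \exp\Big(\lambda^2\sum_i c_i^2/8\Big).\]

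The last step is Markov's inequality with the optimal choice $\lambda=4t/\sum_i c_i^2$. This yields the one-sided bound $\exp(-2t^2/\sum_i c_i^2)$. Applying the same argument to $-f$ and taking a union bound gives the factor $2$.

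The main obstacle is the first step: proving that the conditional range has width $c_i$, rather than $2c_i$. This is exactly where independence of the $X_j$ is needed, so that $L_i$ and $U_i$ are taken with the remaining coordinates integrated against the same product measure. In the paper itself this step would simply be cited from \cite{McDiarmid1989BoundedDifferences}.
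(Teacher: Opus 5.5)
Your proposal is correct: the paper gives no proof of this lemma and simply cites McDiarmid, and your Doob-martingale argument is the standard proof from that reference. It uses independence to get a conditional range of width $c_i$, then Hoeffding's lemma, the choice $\lambda=4t/\sum_i c_i^2$, and a union bound over $f$ and $-f$ for the factor $2$. One point needs care: $L_i$ and $U_i$ are a supremum and infimum over $\Omega$, so their measurability is not automatic. This is moot in the paper's applications, where $\Omega$ is a finite group, and in general you can avoid it by applying Hoeffding's lemma conditionally for fixed $x_1,\dots,x_{i-1}$.
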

\subsection{Cayley graphs}We will use the following basic facts about Cayley graphs.

\begin{lemma}\label{lem:largematching} Let $G$ be a group, let $S$ be a generating set, and let $H$ be a subgroup. If there exists an edge between the vertices of $g_1H$ and $g_2H$ in $\mathrm{Cay}_G(S)$, then there exists a matching of size at least $|H|^2/|G|$ between the vertices of $g_1H$ and $g_2H$.
\end{lemma}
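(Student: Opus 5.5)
The plan is to fix the single edge between $g_1H$ and $g_2H$, read off the generator it uses, and then translate that edge around by left multiplication. The reason this works is that left multiplication is an automorphism of $\mathrm{Cay}_G(S)$, so for a fixed $s\in S$ all the pairs $\{x,xs\}$ are edges. We only need to count how many such translates have one endpoint in $g_1H$ and the other in $g_2H$. We assume $g_1H\neq g_2H$, which is the case of interest; otherwise ``a matching between'' the two cosets is not the intended notion.

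\textbf{Step 1 (the generator).} Write the given edge as $\{g_1h_1, g_2h_2\}$ with $h_1,h_2\in H$. Then $s:=(g_1h_1)^{-1}g_2h_2\in S$ (or its inverse, which is also in $S$ by symmetry). Put $t:=g_1^{-1}g_2$, so that $s=h_1^{-1}th_2$.

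\textbf{Step 2 (which translates land in the right cosets).} Take $x=g_1h\in g_1H$. Then $xs=g_1hh_1^{-1}th_2$. This lies in $g_2H=g_1tH$ if and only if $hh_1^{-1}th_2\in tH$. Since $h_2\in H$, this is equivalent to $hh_1^{-1}\in tHt^{-1}$, i.e.\ to $h\in (H\cap tHt^{-1})h_1$. Let $K:=H\cap tHt^{-1}$. We obtain $|K|$ edges $\{x,xs\}$ with $x\in g_1Kh_1\subseteq g_1H$ and $xs\in g_2H$.

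\textbf{Step 3 (size bound).} $H$ and $tHt^{-1}$ are subgroups of order $|H|$, and the product set $H\cdot tHt^{-1}$ lies in $G$. Hence
\[ |K|=\frac{|H|\,|tHt^{-1}|}{|H\,tHt^{-1}|}\ \ge\ \frac{|H|^2}{|G|}. \]

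\textbf{Step 4 (it is a matching).} Each left endpoint $x$ lies in $g_1H$ and each right endpoint $xs$ lies in $g_2H$, and these cosets are disjoint. The map $x\mapsto xs$ is injective. So no two of these edges share a vertex, and we have the desired matching.

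The only mildly delicate point is Step 2, namely getting the coset condition right so that it becomes the intersection of $H$ with a conjugate. Once that is done, the counting in Step 3 is the standard product-set formula for two subgroups.
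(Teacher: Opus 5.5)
Your proof is correct and follows essentially the same route as the paper. Like the paper, you fix the generator $s$ carried by the given edge, show that the $h\in H$ with $g_1hs\in g_2H$ form a coset of $H$ intersected with a conjugate of $H$, and bound its size via $|H\cdot tHt^{-1}|=|H|^2/|H\cap tHt^{-1}|\le|G|$. The only cosmetic difference is that you conjugate by $t=g_1^{-1}g_2$ where the paper conjugates by $s$; since $s=h_1^{-1}th_2$, the subgroups $H\cap sHs^{-1}$ and $H\cap tHt^{-1}$ are conjugate by $h_1\in H$ and so have the same order.
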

\begin{proof}
Suppose $g_1 h_0 s \in g_2 H$ for some $h_0 \in H$, $s \in S$. Right multiplication by~$s$ is injective, so the set of edges $\{(g_1 h,\, g_1 h s) : h \in H,\; g_1 h s \in g_2 H\}$ forms a matching between $g_1 H$ and $g_2 H$. The size of this matching is the number of $h \in H$ such that $g_1 h s \in g_2 H$.

Since $g_1 h_0 s \in g_2 H$, we have $g_1 h s \in g_2 H$ if and only if $(g_1 h_0 s)^{-1}(g_1 h s) = s^{-1} h_0^{-1} h\, s \in H$, i.e.\ $h_0^{-1} h \in s H s^{-1}$. As $h_0, h \in H$, this is equivalent to $h_0^{-1} h \in H \cap s H s^{-1}$, and so there are exactly $|H \cap s H s^{-1}|$ such elements $h$.

Then, since $H$ and $sHs^{-1}$ are both subgroups of order $|H|$, the product formula gives
\[
|H \cdot s H s^{-1}| = \frac{|H|^2}{|H \cap s H s^{-1}|} \leq |G|,
\]
and hence $|H \cap s H s^{-1}| \geq |H|^2 / |G|$.
\end{proof}


We also use the following well-known fact about bipartite Cayley graphs.
\begin{lemma}\label{fact:bipartite}
    In a connected bipartite Cayley graph, one of the parts corresponds to an index-$2$ normal subgroup and thus the other part is the non-trivial right (or left) coset.
\end{lemma}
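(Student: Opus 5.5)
Let $\Gamma=\mathrm{Cay}_G(S)$ be connected and bipartite with parts $X$ and $Y$, and let $e$ denote the identity. Since left multiplication is a graph automorphism, I may assume $e\in X$. I would show that $X$ is a subgroup $H$ of index $2$. Normality is then automatic, and $Y=G\setminus H$ is both the non-trivial left coset and the non-trivial right coset.

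\textbf{Step 1: $S\subseteq Y$.} Every $s\in S$ is adjacent to $e$, so $S\subseteq Y$.

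\textbf{Step 2: parity of words.} Because $S$ generates $G$ and is symmetric, every $g\in G$ can be written as a word $g=s_1s_2\cdots s_\ell$ with $s_i\in S$. Such a word traces the walk $e,\ s_1,\ s_1s_2,\ \ldots,\ g$ in $\Gamma$. By bipartiteness, $g\in X$ if and only if $\ell$ is even. Hence the parity of $\ell$ depends only on $g$, not on the chosen word.

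\textbf{Step 3: a homomorphism.} Define $\phi:G\to\mathbb Z/2$ by letting $\phi(g)$ be the parity of any word for $g$; this is well defined by Step 2. Concatenating words gives $\phi(gh)=\phi(g)+\phi(h)$, so $\phi$ is a homomorphism.

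\textbf{Step 4: conclusion.} By Step 2, $X=\ker\phi$. The map $\phi$ is surjective because $Y\supseteq S\neq\emptyset$. Therefore $H:=X$ is a normal subgroup of index $2$, and $Y=gH=Hg$ for any $g\notin H$.

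\textbf{Remark on the normalisation.} If instead $e\in Y$, the same argument shows that $X$ is the non-trivial coset. The statement is about which part is the subgroup, so this case gives the same conclusion.

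\textbf{Main obstacle.} The only subtle point is the well-definedness of $\phi$ in Step 2. It follows from the standard fact that a connected bipartite graph has a unique bipartition: all walks between two fixed vertices have lengths of the same parity.
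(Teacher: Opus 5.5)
Your proof is correct and follows essentially the same route as the paper: both define the parity map $\phi\colon G\to\mathbb{Z}/2\mathbb{Z}$ and identify the part containing $e$ with $\ker\phi$. The paper takes the parity of $d(e,x)$ while you take the parity of word length, which amounts to the same thing since words over $S$ are exactly walks from $e$. Your word-concatenation argument for the homomorphism property is slightly cleaner than the paper's translated-path argument, which writes $d(y,xy)$ where left-invariance actually gives $d(y,yx)$; this slip is harmless since $\mathbb{Z}/2\mathbb{Z}$ is abelian.
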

\begin{proof}
Since $\mathrm{Cay}(G,S)$ is connected and bipartite, the map $\phi\colon G \to \mathbb{Z}/2\mathbb{Z}$ assigning to each element the parity of its distance from~$e$ is a surjective homomorphism. Indeed, because of translation invariance we have that the distance $d(e,x)$ is the same as $d(y,xy)$, for the identity $e$ and $x,y\in G.$ By concatenating a shortest path from $e$ to $y$ with the translated shortest path from $y$ to $xy$, we obtain a walk from $e$ to $xy$ of length $d(e,x)+d(e,y)$. In a bipartite graph, all walks between two vertices have the same parity, so the parity of the distance $d(e,xy)$ must equal the parity of this concatenated walk: $\phi (xy)\equiv d(e,x)+d(e,y)\equiv \phi(x)+\phi(y)\quad (mod\,2)$.

Thus, $\phi$ is a homomorphism, so $\phi^{-1}(0)$ is a normal subgroup of index~$2$ coinciding with the part containing~$e$.
\end{proof}

\subsection{Random translates}
The next lemma states that a set formed by taking sufficiently many random translates of a set $F$ in a Cayley graph behaves similarly to a random subset of the same size.

\begin{lemma}\label{lem:randomtranslates}Let $n\in\mathbb N$ be sufficiently large and let  $m,k\in\mathbb N$ and $\sigma\in [0,1]$ satisfy $2\sigma km^{3/2}\le n\sqrt{\log n} $. Let $G$ be a finite group with $n$ elements, let $S\subset G$ be a generating set with $|S|=\sigma n$. Let $F\subset G$ be any subset with $|F|\le k$. Suppose that $g_1,\ldots, g_m\in G$ are chosen independently and uniformly at random. Then, for any  $x\in G$, 
\[\mathbb P\left(\big||N_{\mathrm{Cay}_G(S)}(x)\cap (g_1F\cup\ldots \cup g_mF)|-\sigma m|F|\big|>10\sqrt{m|F|^2\log n}\right)\le 2n^{-10}.\]
\end{lemma}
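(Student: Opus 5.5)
The plan is to write $X:=|N_{\mathrm{Cay}_G(S)}(x)\cap (g_1F\cup\ldots\cup g_mF)|$ as a function $f(g_1,\ldots,g_m)$ of the independent uniform elements $g_i$. I will show two things: that $\mathbb E[X]$ is within $\sqrt{m|F|^2\log n}$ of $\sigma m|F|$, and that $X$ is concentrated around $\mathbb E[X]$ by McDiarmid's inequality (Lemma~\ref{lemma:mcdiarmid}).

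\emph{Expectation.} Let $Y:=\sum_{i\in[m]}|N(x)\cap g_iF|$, which counts elements with multiplicity, so $X\le Y$. For each fixed $f\in F$ the element $g_if$ is uniform on $G$, because left multiplication by a uniform $g_i$ is uniform. Also $|N(x)|=|S|=\sigma n$, since $S$ is symmetric and $\mathrm{Cay}_G(S)$ is $|S|$-regular. Hence $\mathbb E[Y]=m|F|\cdot\sigma n/n=\sigma m|F|$. The difference $Y-X$ is at most the number of quadruples $(i,j,f,f')$ with $i<j$ and $g_if=g_jf'\in N(x)$. For $i\ne j$ the elements $g_if$ and $g_jf'$ are independent and uniform, so each quadruple contributes probability $\sigma n/n^2=\sigma/n$. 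This gives
\[0\le \mathbb E[Y]-\mathbb E[X]\le \frac{\sigma m^2|F|^2}{n}\le \frac{\sigma m^2 k|F|}{n}\le \frac{|F|\sqrt{m\log n}}{2},\]
where the last step uses the hypothesis $2\sigma k m^{3/2}\le n\sqrt{\log n}$. So $|\mathbb E[X]-\sigma m|F||\le \sqrt{m|F|^2\log n}$.

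\emph{Concentration.} Changing a single $g_i$ to $g_i'$ changes the union by removing at most the $|F|$ elements of $g_iF$ and adding at most the $|F|$ elements of $g_i'F$. Since $X$ counts the union (not the removed and added elements together), it changes by at most $|F|$, so we may take $c_i=|F|$. McDiarmid's inequality with $t=9\sqrt{m|F|^2\log n}$ gives
\[\mathbb P\big(|X-\mathbb E X|\ge t\big)\le 2\exp\!\Big(-\frac{2\cdot 81\, m|F|^2\log n}{m|F|^2}\Big)=2n^{-162}\le 2n^{-10}.\]
On the complementary event, the triangle inequality with the expectation estimate gives $|X-\sigma m|F||<10\sqrt{m|F|^2\log n}$, as required. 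The edge case $F=\emptyset$ is trivial.

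The only delicate point is the overlap correction $\mathbb E[Y]-\mathbb E[X]$. This is exactly where the hypothesis on $m,k,\sigma$ is needed, and one has to make sure it is the collisions inside $N(x)$ that get counted, which is where the extra factor $\sigma$ comes from. The rest is a routine application of Lemma~\ref{lemma:mcdiarmid}.
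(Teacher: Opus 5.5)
Your proof is correct and follows essentially the same route as the paper. You estimate $\mathbb E[X]$ up to a second-order overlap error of order $\sigma m^2|F|^2/n$; your collision count is the Bonferroni/union-bound form of the paper's expansion of $1-(1-|F|/n)^m$. You absorb that error using $2\sigma k m^{3/2}\le n\sqrt{\log n}$, then apply McDiarmid's inequality with $c_i=|F|$, and the only differences are cosmetic (for instance, your $9+1$ split of the deviation versus the paper's $5+5$).
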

\begin{proof}
    Let us fix $F\subset G$ with $|F|\le k$ and note that, for any $x,g\in G$, we have at most $|F|$ many elements $s\in G$ which translate $x$ to $gF$, i.e. for which $xs\in gF$. Thus we have that for all $x\in G$,
    \begin{equation}\label{eq:prob:translates}
        \mathbb{P}(x\in g_1F\cup\ldots\cup g_mF)=1-\prod_{i\in [m]}\mathbb{P}(x\not\in g_i F)=1-(1-|F|/n)^m=\frac{|F|m}{n}\pm \frac{10 m^2|F|^2}{n^2},
    \end{equation}
    as long as $n$ is large enough. Then, letting $X=|N_{\mathrm{Cay}_G(S)}(x)\cap (g_1F\cup\ldots \cup g_mF)|$ we have
\[\mathbb E[X]=m\sigma |F|\pm \frac{10\sigma m^2|F|^2}{n}.\]
Note that $2\sigma km^{3/2}\le n\sqrt{\log n}$ implies $\sqrt{m|F|^2\log n}\ge {2\sigma m^2|F|^2}/{n}$, so McDiarmid's inequality (Lemma~\ref{lemma:mcdiarmid}), for the function $f(g_1,\ldots, g_m)=|N_{\mathrm{Cay}_G(S)}(x)\cap (g_1F\cup\ldots \cup g_mF)\big|$, $t=5\sqrt{m|F|^2\log n}$, and $c_i=|F|$, gives
\begin{eqnarray*}
 \mathbb P\left(\big||N_{\mathrm{Cay}_G(S)}(x)\cap (g_1F\cup\ldots \cup g_mF)\big|-\sigma m|F||>10\sqrt{m|F|^2\log n}\right) &\le & \mathbb P\left(|X-\mathbb E[X]|>t\right)\\
 &\le& 2\exp\left(-\tfrac{100m|F|^2\log n}{m|F|^2}\right)\\
 &\le&2n^{-10},
\end{eqnarray*}
as desired.
\end{proof}
\subsection{Linear forests in almost-regular graphs}
 A directed graph is $d$-regular if every vertex has exactly $d$ in-neighbours and $d$ out-neighbours (in and out-neighbourhoods of vertices are allowed to overlap). Also, we say a graph is $(d\pm d')$-regular if all degrees belong to $[d-d',d+d']$, and similarly for directed graphs. Here we are interested in finding a \textit{linear forest} or \textit{path forest} in almost-regular graphs with as few components as possible, where a linear forest is a forest whose components are all paths. For this, we use the following result. 

\begin{theorem}[\cite{christoph2025cycle}]\label{thm:magnant-martin}Every $n$-vertex $d$-regular (directed) graph has a spanning linear forest with $O(n\log d/d)$ (directed) paths that can be found in polynomial time.
\end{theorem}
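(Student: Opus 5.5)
Since this is the cited theorem of~\cite{christoph2025cycle}, I only sketch how I would prove it. First I reduce the undirected case to the directed one. Replace each edge $uv$ of a $d$-regular graph by the two arcs $u\to v$ and $v\to u$. The result is a $d$-regular digraph on the same vertex set. A directed path visits distinct vertices, so it is also a path in the underlying undirected graph. Hence a spanning directed linear forest with $O(n\log d/d)$ paths gives an undirected one of the same size.

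From now on let $D$ be a $d$-regular digraph. Form its bipartite double cover $B$, with parts $V^+$ and $V^-$ and an edge $u^+v^-$ for every arc $u\to v$. Then $B$ is $d$-regular bipartite. By Hall's theorem (or K\"onig's edge-colouring theorem) it splits into $d$ perfect matchings, and each of these is a $1$-factor of $D$, i.e.\ a spanning family of vertex-disjoint directed cycles; a loop or a $2$-cycle also counts as a cycle. Deleting one arc from each cycle of a $1$-factor gives a spanning directed linear forest with as many paths as the factor has cycles. So it suffices to find, in polynomial time, a $1$-factor with $O(n\log d/d)$ cycles. Equivalently, I want a factor whose cycles have average length $\Omega(d/\log d)$.

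To get such a factor I would use local switchings. Suppose a $1$-factor $\mathcal C$ contains a short cycle $C$ with $|C|\le d/(K\log d)$. Every vertex of $C$ has $d$ out-arcs, and all but at most $|C|$ of them leave $C$. Take an arc $u\to w$ with $u\in C$ and $w\in C'\neq C$, and let $u'$ and $w^-$ be the successor of $u$ and the predecessor of $w$ in $\mathcal C$. If the arc $w^-\to u'$ exists, then swapping the pair $u\to u'$, $w^-\to w$ for $u\to w$, $w^-\to u'$ merges $C$ and $C'$ into one cycle. In general this closing arc is missing. I would then grow an alternating-path tree in $B$ relative to the perfect matching $\mathcal C$, rooted at the vertices of $C$. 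Because $B$ is $d$-regular, such a tree expands, and I would look for an augmenting rearrangement (an alternating cycle) that strictly decreases the number of cycles. This is a Hungarian-method style search and runs in polynomial time.

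The main obstacle is showing that such a cycle-reducing rearrangement always exists while many short cycles remain. This is where the $\log d$ loss arises. A single switch may merge two cycles but could also split a long cycle, so I would track a potential such as the number of cycles of length below $d/(K\log d)$. I would argue that the short cycles together span fewer than $n/2$ vertices of the alternating structure, and that $d$-regularity forces the reachable sets in the alternating tree to grow by a factor of roughly $2$ at each of $O(\log d)$ levels until they reach long cycles. An alternative route is to pick a uniformly random one of the $d$ matchings, or a random perfect matching of $B$. For a random permutation-like structure the expected number of cycles of length below $\ell$ is about $n\log \ell/d$, so I would try to prove this by a permanent/entropy comparison (Br\'egman and Van der Waerden bounds for $d$-regular bipartite graphs). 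That argument would only give existence, so it would need derandomisation for the polynomial-time claim. I would attempt the switching argument first.
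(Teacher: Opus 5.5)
The paper gives no proof of this statement: it imports it from \cite{christoph2025cycle} as a black box. Your reductions are correct and standard. Doubling edges into arcs handles the undirected case, and a perfect matching of the bipartite double cover $B$ is a $1$-factor of $D$; deleting one arc from each of its cycles gives a linear forest. But this only reduces the theorem to its entire content: that a $1$-factor with $O(n\log d/d)$ cycles exists and can be found in polynomial time. That step you do not prove, and you flag it yourself as the main obstacle.

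In the switching plan, applying an alternating cycle of length $2r$ in $B$ replaces the permutation $\pi$ by $\pi\circ\rho$ with $\rho$ an $r$-cycle. Such a move can split cycles, creating new short ones, as readily as merge them. You supply no lemma showing that a move decreasing your potential exists whenever more than $Cn\log d/d$ cycles remain, so the local search may simply stall. A Hungarian-type search cannot replace such a lemma, since minimising the number of cycles exactly would decide Hamiltonicity. The two supporting claims are also unfounded:
\begin{itemize}
\item Regularity of $B$ only gives $|N(X)|\ge\max\{|X|,d\}$, not doubling over $\Theta(\log d)$ levels. For a disjoint union of complete digraphs on $d+1$ vertices, the alternating structure is trapped in one component, which may contain no long cycle at all.
\item Nothing stops the short cycles from covering almost all vertices; initially every cycle may be short.
\end{itemize}

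The random-factor route has the right heuristic order: in the same union of complete digraphs, a uniform $1$-factor has about $(n/d)\log d$ cycles. But the permanent comparison you propose cannot make it rigorous. Br\`egman's upper bound and Schrijver's lower bound for $\mathrm{per}(B)$ differ by a factor $e^{\Theta(n\log d/d)}$. So estimating $\Pr[C\subseteq M]=\mathrm{per}(B-V(C))/\mathrm{per}(B)$ by bounding numerator and denominator separately gives something exponentially large in $n/d$. A first-moment count over as many as $nd^{k-1}/k$ directed $k$-cycles instead needs a bound of order $d^{-k}$ with at most a constant-factor loss. You would need a genuinely local analysis of a uniform perfect matching of $B$ along short cycles, and then a derandomisation for the algorithmic part.

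As written, the proposal is a correct reduction followed by two unproved strategies. The key estimate, which is the whole substance of the cited theorem, is missing.
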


Compared to the bound given above, \cite{christoph2025new} gives a sharper bound that removes the logarithmic factor, but the result from~\cite{christoph2025cycle} works for directed graphs and is algorithmic as well.
\par The following corollary of the above result will be convenient for our application.

\begin{lemma}\label{lem:directed:magnant} Let $G$ be an $n$-vertex $(d\pm d')$-regular (directed) graph where $d'\leq d$. Then, $G$ has a spanning linear forest with $O(\frac{n\log d}{d}+\frac{nd'}{d})$ (directed) paths.
\end{lemma}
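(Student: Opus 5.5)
The plan is to reduce Lemma~\ref{lem:directed:magnant} to Theorem~\ref{thm:magnant-martin} by embedding $G$ into an exactly regular (directed) graph $G'$ that contains $G$ as an induced subgraph and has only $O(nd'/d)$ extra vertices. We then read off a linear forest of $G$ from one of $G'$.

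First we treat the case $d' \ge d/4$. Here the bound $O(nd'/d)=\Omega(n)$ is trivial, since we may take every vertex as its own path. So assume $d'<d/4$, and set $D=d+d'$. Every vertex $v$ of $G$ has deficiency $D-\deg(v)\in[0,2d']$ (in the directed case, separately for in- and out-degree). Add a new vertex set $W$ of size $N=\lceil 4nd'/D\rceil+D+1$ or so, chosen so that parities work out in the undirected case.

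Next, join each $v\in V(G)$ to exactly $D-\deg(v)$ vertices of $W$, spreading these edges so that each vertex of $W$ receives at most $D$ of them. This is possible because the total deficiency is at most $2d'n\le DN/2$; a greedy round-robin assignment works, and in the directed case we handle in- and out-arcs separately. Then complete $W$ internally to a $D$-regular graph by adding edges inside $W$ so that every $w\in W$ reaches degree exactly $D$. Since $|W|>D$, one can realise any degree sequence on $W$ with all residual degrees at most $D$ and the right parity, for instance via a near-regular graph plus a correction, or in the directed case via a union of permutation digraphs adjusted by the prescribed residual in/out-degrees. Handling this step cleanly (parity, and realising prescribed residual degrees inside $W$ as a simple graph) is the main technical obstacle. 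If needed we can enlarge $W$ by a further $O(D)$ vertices, which is fine because $D\le 2d$ gives $O(D)\cdot$ nothing harmful after dividing; more precisely, we may choose $|W|=\Theta(nd'/d + d)$, and if $d>nd'/d$ we instead use the simpler argument that the added count $O(d)$ contributes $O(1)$ paths when we split. We will check that this term is absorbed in $O(n\log d/d)$, using $n\ge d$.

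Now apply Theorem~\ref{thm:magnant-martin} to the $(n+|W|)$-vertex $D$-regular graph $G'$. This gives a spanning linear forest with $O((n+|W|)\log D/D)=O(n\log d/d + nd'/d)$ paths. Deleting the vertices of $W$ from this forest cuts each path into pieces. Each deleted vertex increases the number of components by at most one, so the restriction to $V(G)$ is a spanning linear forest of $G$ with at most $O(n\log d/d)+|W|=O(n\log d/d+nd'/d)$ paths. The forest lies in $G$ because $G$ is an induced subgraph of $G'$. The directed case is identical, with directed paths.
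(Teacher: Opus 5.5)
Your route is the paper's: embed $G$ as an induced sub(di)graph of a $(d+d')$-regular $G'$, apply Theorem~\ref{thm:magnant-martin}, and delete the new vertices. The paper cites G\'orska--Skupie\'n and Erd\H{o}s--Kelly for the embedding, while you try to build it by hand. As written, your argument has two genuine gaps.

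First, the final count fails. You bound the number of paths of $G$ by $O(n\log d/d)+|W|$ with $|W|\ge D+1>d$. But $d$ is not $O(\frac{n\log d}{d}+\frac{nd'}{d})$ in general: for $d=n/2$ and $d'=1$ the target is $O(\log n)$, while $|W|\approx n/2$. Your remarks that the buffer ``contributes $O(1)$ paths'' and is ``absorbed in $O(n\log d/d)$ using $n\ge d$'' are false, since $n\ge d$ only gives $n\log d/d\ge\log d$. Moreover, your round-robin spreads the deficiency edges over all of $W$, so every deleted vertex can genuinely cut a path.

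Second, the completion inside $W$ is unjustified. It is not true that every sequence with entries at most $D$ on more than $D$ vertices and even sum is graphic; for example, $(3,3,1,1)$ with $D=3$ is not.

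Both gaps can be repaired, and your instinct that a buffer of size about $D$ is needed is correct. If $G$ is a $D$-regular digraph minus one arc, the $m$ new vertices send exactly one arc into $V(G)$. Counting out-degrees, they send at least $m(D-m+1)$, which forces $m\ge D+1$. So no embedding with only $O(nd'/d)$ new vertices exists once $d\gg\sqrt n$. The paper's one-line count (paths of $G'$ plus $|V(G')\setminus V(G)|$) glosses over exactly this point.

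The correct count is that deleting $W$ leaves at most (number of paths of $G'$)$\,+\,|W_1|$ paths, where $W_1\subseteq W$ is the set of new vertices with a neighbour in $V(G)$. Indeed, each excursion of a path of $G'$ into $W$ between two $G$-segments begins at a distinct vertex of $W_1$. The repair then runs as follows.

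\begin{itemize}
\item Send all deficiency edges round-robin into a set $W_1$ of size $\lceil 4nd'/D\rceil$. This is at least $2d'$ because $n>3d/4$, so each $v$ gets distinct neighbours, and each $w\in W_1$ receives at most $\lceil D/2\rceil$ edges (in the directed case, of each type).
\item Add a buffer $W_2$ with no edges to $V(G)$, chosen so that $|W|\ge 4D$ and the parity is right.
\item All residual degrees on $W$ then lie in $[D/3,D]$, and the Erd\H{o}s--Gallai inequalities (resp.\ Fulkerson's criterion for loopless digraphs) are routine to verify.
\item Now $|V(G')|=O(n)$, so Theorem~\ref{thm:magnant-martin} gives $O(n\log d/d)$ paths for $G'$, and deletion adds only $|W_1|=O(nd'/d)$.
\end{itemize}
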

\begin{proof}
    We prove the result in the directed case (undirected graphs may be viewed as directed graphs where every edge points in both directions). We wish to construct some regular digraph $G'$ for which $G$ is an induced subgraph of $G'$. If $G'$ does not have too many more vertices than $G$, a spanning linear forest of $G'$ with $k$ paths then implies the existence of a spanning linear forest of $G$ with $k+\ell$ paths, where $\ell=|V(G')\setminus V(G)|$. 
    \par As $G$ is $(d\pm d')$-regular, $G$ is an induced subgraph of some digraph $G'$ which is $(d+d')$-regular whilst containing only $O(nd'/d)$ more vertices than $G$ (this is a corollary of the main result of~\cite{gorska2009inducing}, and the undirected version of such a statement goes back to classical work of Erd\H{o}s and Kelly~\cite{erdos1963minimal}). Using Theorem~\ref{thm:magnant-martin} we get a partition of $G'$ into at most $$O\left(\frac{|V(G')|\log (d'+d)}{d'+d}\right)=O\left(\frac{|V(G')|\log d}{d}\right)$$ paths (using $d'\leq d$). Deleting all vertices in $V(G')\setminus V(G)$, we obtain a spanning path forest of $G$ of the desired form.
\end{proof}
\section{ Proof of Theorem~\ref{thm:mainthm}}\label{section:skeleton} In this section, we derive our main result (Theorem~\ref{thm:mainthm}) from \Cref{thm:maintechnical} (the latter is proved in the next section). We begin by finding the connecting structure we require across the left cosets. 
\begin{definition}[Auxiliary graph]
Let $G$ be a group, $H$ be a subgroup of $G$, and $S \subseteq G$ be a subset of elements. Let $G/H = \{gH \mid g \in G\}$ denote the set of left cosets of $H$ in $G$. The auxiliary graph $\mathrm{Aux}_{G/H}(S)$ is the graph with vertex set $ G/H$ such that two distinct cosets $C_1, C_2 \in G/H$ are adjacent if and only if there exist elements $c_1 \in C_1$ and $c_2 \in C_2$ such that $(c_1, c_2)$ is an edge in the Cayley graph $\mathrm{Cay}_G(S)$.
\end{definition}
\begin{lemma}\label{lem: transitive}
    $\mathrm{Aux}_{G/H}(S)$ is a vertex-transitive graph. In particular, it is regular. 
\end{lemma}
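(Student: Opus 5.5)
The plan is to exhibit, for every $g\in G$, an automorphism of $\mathrm{Aux}_{G/H}(S)$ given by left multiplication, and then observe that these automorphisms act transitively on $G/H$. Concretely, for $g\in G$ define $\phi_g\colon G/H\to G/H$ by $\phi_g(xH)=gxH$. This is well defined, since $xH=yH$ implies $gxH=gyH$. It is a bijection with inverse $\phi_{g^{-1}}$.

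The first key step is to check that $\phi_g$ preserves adjacency. Suppose $C_1\neq C_2$ are adjacent, witnessed by $c_1\in C_1$ and $c_2\in C_2$ with $c_2=c_1s$ for some $s\in S$. Then $gc_2=(gc_1)s$, so $gc_1\in gC_1$ and $gc_2\in gC_2$ are adjacent in $\mathrm{Cay}_G(S)$. This is exactly the fact, used earlier, that left multiplication is an automorphism of Cayley graphs. Moreover $gC_1\neq gC_2$ because $\phi_g$ is injective, so $\phi_g(C_1)$ and $\phi_g(C_2)$ are adjacent. Applying the same argument to $\phi_{g^{-1}}$ shows that non-adjacency is also preserved, so $\phi_g$ is a graph automorphism.

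The second step is transitivity. Given cosets $xH$ and $yH$, the map $\phi_{yx^{-1}}$ sends $xH$ to $yH$. Hence the automorphism group acts transitively on the vertices.

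Regularity then follows, because automorphisms preserve degrees and any two vertices are related by an automorphism. I expect no real obstacle here. The only points needing care are that adjacency in the auxiliary graph requires distinct cosets, which is handled by the injectivity of $\phi_g$, and the well-definedness of $\phi_g$ on left cosets. Using left cosets together with left multiplication is precisely what makes this work, whereas right multiplication would not be well defined on left cosets when $H$ is not normal.
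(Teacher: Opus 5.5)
Your proof is correct and takes essentially the same route as the paper: left multiplication $xH\mapsto gxH$ is a well-defined bijection of $G/H$ that preserves adjacency, and $G$ acts transitively on the left cosets. Your additional checks, namely that the image cosets remain distinct and that non-adjacency is preserved via $\phi_{g^{-1}}$, simply make explicit details the paper leaves implicit.
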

\begin{proof}
    For each $g \in G$, the map $xH \mapsto gxH$ is a well-defined bijection on $G/H$ that preserves adjacency (if $c_1 s = c_2$ witnesses $C_1 \sim C_2$, then $gc_1 \cdot s = gc_2$ witnesses $gC_1 \sim gC_2$), and $G$ acts transitively on $G/H$.
\end{proof}

\begin{definition}Given a subgroup $H$, we call a matching $M$ in $\mathrm{Cay}_G(S)$ an \textit{$H$-skeleton} if none of its edges have both endpoints inside any single left coset $gH$, if $V(M)$ can be enumerated as $x_0,x_1,\ldots, x_k$ where $x_{2i}\sim x_{2i+1}$ are the matched edges, and furthermore, the sequence of left cosets containing $x_0,x_1,\ldots, x_k$ forms an Euler tour of $\mathrm{Aux}_{G/H}(S)$.
\end{definition}
Suppose that $x_0,x_1,\ldots, x_k$ is a skeleton and $\mathrm{Cay}_G(S)$ is partitioned into paths $P_0,P_1,\ldots, P_{k/2}$ where $P_i$ has endpoints $(x_{2i+1},x_{2i+2})$ (indices are to be interpreted modulo $k+1$). Observe that $M\cup \bigcup P_i$ forms a Hamilton cycle of $\mathrm{Cay}_G(S)$ as
 \[x_0\xrightarrow{M}x_1\xrightarrow{P_0}x_2\xrightarrow{M}x_3\xrightarrow{P_1}x_4\xrightarrow{} \cdots \xrightarrow{P_{k/2 -1}}x_{k-1}\xrightarrow{M}x_k\xrightarrow{P_{k/2}} x_0.\]

\begin{lemma}\label{lem:skeletonsexist}
    Let $\mathrm{Cay}_G(S)$ be a connected Cayley graph and let $H$ be a subgroup of $G$. If $|H|>2|G|^{2/3}$, then $\mathrm{Cay}_G(S)$ has an $H$-skeleton that uses exactly $2\ell$ vertices from any left coset $gH$, for some $\ell\leq |G|/|H|$. 
\end{lemma}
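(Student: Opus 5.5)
Let $m=|G|/|H|$ and let $A=\mathrm{Aux}_{G/H}(S)$. The plan is to build an Euler tour of a suitable multigraph on $G/H$ and then realise each edge of that tour by a distinct edge of $\mathrm{Cay}_G(S)$, with all realising edges vertex-disjoint.

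First I record the structure of $A$. It is connected, since $\mathrm{Cay}_G(S)$ is connected and every edge of the Cayley graph either lies inside a coset or joins two adjacent cosets. By Lemma~\ref{lem: transitive} it is $r$-regular for some $r\le m-1$. If $m=1$ the statement is vacuous, so assume $m\ge 2$.

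\textbf{Step 1: make the graph Eulerian.} If $r$ is even, let $A'=A$. If $r$ is odd, let $A'$ be the multigraph obtained from $A$ by doubling every edge, so every degree becomes $2r$. In either case $A'$ is connected with all degrees even, so it has an Euler tour $C_1C_2\cdots C_t C_1$. Each coset $C$ appears as an endpoint of exactly $\deg_{A'}(C)\le 2r\le 2m$ edge-traversals. Consequently each coset must supply exactly $\deg_{A'}(C)$ distinct vertices, one for each tour edge incident to it. This number is the same for every coset, namely $2\ell$ with $\ell\in\{r/2,\,r\}$, so $\ell\le m$.

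\textbf{Step 2: realise the tour by a matching.} Process the edges $e_1,\ldots,e_t$ of the tour one by one. For $e_j=C_jC_{j+1}$, choose a Cayley edge $x y$ with $x\in C_j$, $y\in C_{j+1}$, and with $x,y$ not used before. Lemma~\ref{lem:largematching} supplies a matching between $C_j$ and $C_{j+1}$ of size at least $|H|^2/|G|$. Previously used vertices in $C_j\cup C_{j+1}$ number at most $2\cdot 2m$, and each such vertex kills at most one edge of this matching. It therefore suffices that
\[
\frac{|H|^2}{|G|}>4m=\frac{4|G|}{|H|},
\]
that is, $|H|^3>4|G|^2$. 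This follows from $|H|>2|G|^{2/3}$, which gives $|H|^3>8|G|^2$. Hence a fresh edge always exists. Doubled edges in the odd case are realised by two distinct matching edges, which the same count allows.

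\textbf{Step 3: check the skeleton conditions.} The chosen edges form a matching, and no edge lies inside a single coset. Enumerate its vertices along the tour: for $e_j$ realised by $x\in C_j$, $y\in C_{j+1}$, set $x_{2j-2}=x$ and $x_{2j-1}=y$. Then $x_{2i}\sim x_{2i+1}$. The sequence of cosets reads $C_1,C_2,C_2,C_3,C_3,\ldots,C_t,C_1$, which traverses the Euler tour (each coset repeated as we pass through it). This matches the skeleton definition, read in the sense of the concatenation used right after the definition, where consecutive $x_{2i+1},x_{2i+2}$ lie in the same coset.

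\textbf{Expected main obstacle.} The substantive point is the disjointness count in Step 2, which is exactly where the threshold $|H|>2|G|^{2/3}$ is used. The minor subtlety is the parity of $r$, which the edge-doubling in Step 1 resolves while keeping $\ell\le|G|/|H|$.
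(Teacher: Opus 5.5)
Your proposal is correct and follows essentially the paper's argument. You take an Euler tour of $\mathrm{Aux}_{G/H}(S)$ with edges doubled and lift it greedily to a matching of $\mathrm{Cay}_G(S)$ using the matchings from Lemma~\ref{lem:largematching}; the paper always bidirects every edge, whereas you double only when the degree is odd, which is an inessential variation. Your disjointness count, which needs $|H|^2/|G|>4|G|/|H|$, is actually more careful than the paper's stated $|H|^2/|G|\ge 2|G|/|H|$, and it does follow from $|H|>2|G|^{2/3}$.
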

\begin{proof}
    Consider the directed graph $R$ obtained by directing each edge of $\mathrm{Aux}_{G/H}(S)$ in both directions. As $R$ has even degrees, it has an Euler tour, say $T$. Also note that $R$ is regular (by Lemma~\ref{lem: transitive}), so the tour visits each left coset exactly $2\ell$ times, where $\ell\leq |G|/|H|$ is the degree of $\mathrm{Aux}_{G/H}(S)$.
    \par Recall Lemma~\ref{lem:largematching}, which establishes that if $C_1\sim C_2$ is an edge in $\mathrm{Aux}_{G/H}(S)$, $\mathrm{Cay}_G(S)$ has a matching of size $|H|^2/|G|$ between $C_1$ and $C_2$. As $|H|^2/|G|\geq 2 |G|/|H|$ by assumption, we can `lift' $T$ into a matching in $\mathrm{Cay}_G(S)$, meaning that we can injectively map edges $e$ of $T$ into disjoint edges of $\mathrm{Cay}_G(S)$ whose endpoints are between the left cosets that are the endpoints of $e$. It is clear the image of this map is a matching with exactly $2\ell $ vertices taken from any left coset. Furthermore, it is an $H$-skeleton, as $T$ is a Euler tour. 
\end{proof}
We can now prove the main theorem, subject to the technical theorem proved in the later section. 
\begin{proof}[Proof of Theorem~\ref{thm:mainthm} via Theorem~\ref{thm:maintechnical}] Let $S$ be a subset of $\sigma n$ elements of a $n$-element group $G$,  where $\sigma \geq n^{-c}$ and $c= 1/200$. By \Cref{prop:RobustCayley-general}, with parameters $\sigma$ and $\varepsilon=1/2$, there is a subgroup $H$ of $G$ with $|S\cap H|\geq |S|/2$, and such that $\text{Cay}_H(S\cap H)$ has no $\sigma^3/2000$-sparse cuts. If $\text{Cay}_H(S\cap H)$ is bipartite, let $H'$ be the index-$2$ subgroup of $H$ as guaranteed by Lemma~\ref{fact:bipartite}, otherwise, let $H'=H$. Let $M$ be a $H'$-skeleton, as guaranteed by Lemma~\ref{lem:skeletonsexist}, enumerated as $x_0,\ldots, x_k$. Invoke  Theorem~\ref{thm:maintechnical} with $(a_i,b_i):=(x_{2i-1}, x_{2i})$ (indices interpreted modulo $k+1$). Note that it follows from the definition of an $H'$-skeleton that the number of $\{a_i,b_i\}$ contained in any $H'$-coset is the same, as the number of times an Euler tour enters and leaves a left coset is the same, so the application is valid. We then find the desired Hamilton cycle (recall the remark after the definition of a skeleton). 
\end{proof}

\section{Robust expansion in graphs and digraphs}\label{sec:robust expansion}
The goal of this section is thus to show that the property of having no sparse cuts implies a robust notion of connectivity, which, in particular, implies that any two distinct vertices can be connected through random subsets via many vertex-disjoint paths. To do so, we use the notion of \textit{robust expansion} as systematically studied by K\"uhn, Osthus, and Treglown~\cite{kuhn2010hamiltonian}, which has been used since then to tackle several problems in extremal combinatorics (see e.g.~\cite{kuhn2010hamiltonian,kuhn2015robust,towards-graham}). The crucial definition is the following.  
\begin{definition*}
    Let $G$ be an $n$-vertex graph. For $U \subseteq V(G)$ and $\nu > 0$, we define the \emph{$\nu$-robust neighbourhood} of $U$ in $G$ to be the set
    \[RN_{\nu,G}(U) := \{v \in V(G) : |N(v) \cap U| \geq \nu n\}.\]
    We say that $G$ is a \emph{robust $(\nu, \tau)$-expander} if every $U \subseteq V(G)$ with $\tau n \leq |U| \leq (1-\tau)n$ satisfies
    \begin{equation}\label{eq:robust-expansion}|RN_{\nu, G}(U) \setminus U| \geq \nu n.\end{equation}

Similarly, if $G$ is a directed graph, one can define  the \textit{$\nu$-robust-out-neighbourhood} as 
\[RN^+_{\nu,G}(U) := \{v \in V(G) : |N(v)^- \cap U| \geq \nu n\}.\]
and say that $G$ is a \textit{robust $(\nu,\tau)$-out-expander} if~\eqref{eq:robust-expansion} holds with $RN_{\nu,G}^+(U)$ in place of $RN_{\nu,G}(U)$. \end{definition*}

Observe that if $G$ is a robust $(\nu, \tau)$-expander graph, then the directed graph $D_G$ obtained by replacing each edge of $G$ with the two edges joining its endpoints in both directions is a robust $(\nu, \tau)$-out-expander. Moreover, a directed path in $D_G$ corresponds to a path in $G$. Thus, any result about directed paths in robust out-expanders also applies to the non-directed setting.

The following simple lemma connects the property of having no sparse cuts to having good robust expansion properties.

\begin{lemma}[Proposition 5.4~\cite{bedert2025graham}]\label{lem:sparse-cut}
    Let $0\le \tau \le 1/2$ and $0 \leq \zeta \leq 1$.  Then any graph $H$ with no $\zeta$-sparse cuts is a $(\zeta \tau/8,\tau)$-robust-expander. 
\end{lemma}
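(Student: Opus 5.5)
We argue by contraposition: given $U\subseteq V(H)$ with $\tau n\le |U|\le (1-\tau)n$ and $|RN_{\nu,H}(U)\setminus U|<\nu n$, where $\nu=\zeta\tau/8$ and $n=|V(H)|$, we produce a $\zeta$-sparse cut. The natural cut is $X=U\cup RN_{\nu,H}(U)$, $Y=V(H)\setminus X$. By construction every $y\in Y$ has fewer than $\nu n$ neighbours in $U$. Moreover, $|X|<|U|+\nu n$, and hence $|Y|>n-|U|-\nu n\ge \tau n-\nu n\ge \tau n/2$, since $\nu\le \tau/8$.

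Next we bound $e(X,Y)$. Split it as $e(U,Y)+e(RN_{\nu,H}(U)\setminus U,\,Y)$. For the first term, each $y\in Y$ sends fewer than $\nu n$ edges to $U$, so $e(U,Y)<\nu n|Y|$. For the second term, the set $RN_{\nu,H}(U)\setminus U$ has fewer than $\nu n$ vertices, and each has at most $|Y|$ neighbours in $Y$, so this term is also less than $\nu n|Y|$. Together,
\[e(X,Y)<2\nu n|Y|=\tfrac{\zeta\tau}{4}\,n|Y|.\]

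Finally we compare this with $\zeta|X||Y|$. Since $|X|\ge |U|\ge \tau n$, we have $\zeta|X||Y|\ge \zeta\tau n|Y|>\tfrac{\zeta\tau}{4}n|Y|>e(X,Y)$. Both sides are nonempty: $X\supseteq U\neq\emptyset$ because $\tau n>0$ (the case $\tau=0$ needs only a trivial separate check), and $|Y|>0$ as shown above. So $(X,Y)$ is a $\zeta$-sparse cut, a contradiction.

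The only delicate point is guaranteeing that $Y$ is nonempty and of linear size, which uses the upper bound $|U|\le (1-\tau)n$. The constant $8$ leaves ample room, so I do not expect any step to be a real obstacle.
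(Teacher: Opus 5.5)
Your argument is correct, and it proves slightly more than stated. The only constraints you use are $2\nu\le\zeta\tau$, so that $e(X,Y)<2\nu n|Y|\le\zeta|X||Y|$, and $\nu\le\tau$, so that $Y\neq\emptyset$. Hence $H$ is even a $(\zeta\tau/2,\tau)$-robust expander, and the hypothesis $\tau\le 1/2$ is never needed.

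The paper itself gives no proof of this lemma; it is quoted from \cite{bedert2025graham}. Your cut, $U\cup RN_{\nu,H}(U)$ against its complement, is the same device the paper uses to prove Proposition~\ref{lem:bipartite expansion}. There $d$-regularity is needed only because the target is the stronger growth bound $|RN_{\nu,H}(X)|\ge|X|+\nu n$.

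An alternative is to apply the no-sparse-cut hypothesis to $(U,V(H)\setminus U)$ directly. Split the edges leaving $U$ according to whether their outer endpoint lies in $RN_{\nu,H}(U)$:
\[\tfrac{\zeta\tau}{2}n^2\le\zeta|U|(n-|U|)\le e(U,V(H)\setminus U)\le|RN_{\nu,H}(U)\setminus U|\,n+\nu n^2.\]
This gives $|RN_{\nu,H}(U)\setminus U|\ge(\zeta\tau/2-\nu)n\ge\nu n$ for $\nu=\zeta\tau/8$. That route uses $\tau\le 1/2$ and only reaches $\nu=\zeta\tau/4$. In exchange, it needs no auxiliary cut and no check that the new side is nonempty. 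Your version is marginally sharper.

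One cosmetic point: the degenerate case is really $\nu=0$, meaning $\tau=0$ or $\zeta=0$. In both cases the required bound $|RN_{\nu,H}(U)\setminus U|\ge 0$ is trivial, so the contrapositive hypothesis can never hold. Your final strict inequality $\zeta\tau n|Y|>\frac{\zeta\tau}{4}n|Y|$ also fails when $\zeta=0$. It is unnecessary anyway, since $e(X,Y)<\frac{\zeta\tau}{4}n|Y|$ is already strict.
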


The next two lemmas state that in a graph with no sparse cuts, we can robustly connect any pair of distinct vertices with paths of bounded length whose internal vertices belong to a random subset. 
\begin{lemma}\label{lem:connecting:translates}
    Let $n$ be a positive integer, and let $0<\nu, \tau, \alpha\leq 1$ and $m\in\mathbb N$ satisfy $\nu + \tau \leq \alpha$, $1\le m\le n/100$ and $m\nu^2  \geq 144\log n$. Set $\beta=\nu m/100n$. Let $G$ be an $n$-vertex graph and suppose $G$ is a robust $(\nu,\tau)$-expander with minimum degree at least $\alpha n$. Let $v_1,\ldots, v_m\in V(G)$ be chosen independently and uniformly at random, and let $V_0=\{v_i:i\in [m]\}$. Then, with probability at least $1 - n^{-1}$, the following holds:
   \begin{itemize}
       
        \item For any two distinct vertices $u,v \in V(G)$, and for any vertex subset $V_1 \subseteq V_0$ of size at most $\beta n$, there exists a path of length at most $\nu^{-1} + 1$ from $u$ to $v$ in $G$ whose internal vertices are in $V_0\setminus V_1$. 
    \end{itemize}
\end{lemma}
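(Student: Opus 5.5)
We will prove Lemma~\ref{lem:connecting:translates} with a breadth-first expansion argument. From $u$, the out-neighbourhood inside the random set is iterated until it becomes large. The same is done for $v$. Robust expansion then makes the two large sets meet through vertices of $V_0\setminus V_1$.

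\textbf{Step 1: a deterministic good event.} Call a set $U$ \emph{relevant} if it arises as a robust neighbourhood, or is $N(u)$ for a single vertex $u$. For every relevant $U$ with $|U|\ge \nu n$, we want
\[|U\cap V_0|\ \ge\ \nu m/2.\]
Each $v_i$ lands in $U$ independently with probability $|U|/n\ge\nu$, so Chernoff (Lemma~\ref{lemma:chernoff}) gives failure probability $\exp(-\Omega(\nu m))$. We cannot union bound over all subsets, so we restrict to sets determined by few vertices. First apply the statement to $N(x)$ for all $x\in V(G)$; this is $n$ sets of size at least $\alpha n\ge\nu n$. Second, apply it to $RN_{\nu,G}(W)$ for $W$ ranging over subsets of $V_0$ itself. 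Those depend on $V_0$, so we would instead use the neighbourhood-count form: for every vertex $x$, $|N(x)\cap V_0|\ge \alpha m/2$, and more generally we need, for each vertex $y$ and each relevant $U$, the count $|N(y)\cap U\cap V_0|$. The cleanest route is to apply Chernoff to the $n^2$ pairs $(x,y)$, controlling $|N(x)\cap N(y)\cap V_0|$ whenever $|N(x)\cap N(y)|\ge \nu n$. The hypothesis $m\nu^2\ge144\log n$ gives failure probability $\le n^{-3}$ per pair. A union bound then gives total failure probability at most $n^{-1}$, and from here on we work deterministically.

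\textbf{Step 2: growing from $u$.} Let $A_0=\{u\}$, and let $A_{j+1}$ be the set of vertices $w$ having at least $\nu n/2$ neighbours reachable from $u$ by paths of length at most $j+1$ whose internal vertices lie in $V_0\setminus V_1$. Equivalently, we track $B_j\subseteq V(G)$, the set of endpoints of such paths, and show:
\begin{itemize}
\item $|B_1|\ge \alpha n$ by the minimum degree;
\item whenever $\tau n\le |B_j|\le(1-\tau)n$, robust expansion supplies at least $\nu n$ new vertices $w\notin B_j$ with at least $\nu n$ neighbours in $B_j$.
\end{itemize}
To extend a path to $w$ we need $w$ to have a neighbour in $B_j\cap (V_0\setminus V_1)$ that is itself an endpoint. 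The good event gives $w$ about $\nu m/2$ neighbours in $B_j\cap V_0$. Since $|V_1|\le\beta n=\nu m/100$, deleting $V_1$ kills at most a $1/50$ fraction of these. Hence $|B_{j+1}|\ge|B_j|+\nu n$, and after at most $\nu^{-1}/2$ steps $|B_j|>(1-\tau)n$. The same holds for the analogous set $B'_j$ grown from $v$.

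\textbf{Step 3: meeting.} Once $|B_j|,|B'_j|>(1-\tau)n\ge n/2$, any vertex of $V_0\setminus V_1$ lying in $B_j\cap B'_j$ joins the two paths. This uses $\tau\le1/2$ and $\nu+\tau\le\alpha$, the latter ensuring enough room and that $N(u)\cap V_0$ is large. The total length is at most $\nu^{-1}+1$. We must also ensure the $u$-path and the $v$-path are internally disjoint, avoid $u$ and $v$, and form a path rather than a walk. Taking shortest paths handles this: a shortest walk is automatically a path.

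\textbf{Main obstacle.} The hard step is Step 1, making the random set work simultaneously for all the adaptively defined sets $B_j$, which depend on $V_0$ and on the adversarial $V_1$. I would handle this by recasting each step of the growth as a property of a single vertex $w$ against a fixed set, and by exposing $V_0$ in $\nu^{-1}$ independent rounds of $m\nu$ vertices each. Each round then serves one expansion step, and because round $j$ is independent of the set $B_{j-1}$ it is tested against, Chernoff plus a union bound over the $n$ choices of $w$ suffices.
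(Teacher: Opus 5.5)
There is a genuine gap in Steps 1--2. The event you actually secure in Step 1, concentration of $|N(x)\cap N(y)\cap V_0|$ over the $n^2$ pairs, only controls the first expansion step, where $B_1=N(u)$ is fixed. For $j\ge 2$, the set $B_j$ of endpoints reachable through $V_0\setminus V_1$ is defined in terms of $V_0$ and of the adversarial $V_1$. So the claim that ``the good event gives $w$ about $\nu m/2$ neighbours in $B_j\cap V_0$'' is not justified: $B_j$ is neither of the form $N(x)\cap N(y)$ nor fixed before $V_0$ is sampled.

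Your proposed repair, exposing $V_0$ in rounds, does not work either. First, $V_1$ is quantified inside the event, after all of $V_0$ is revealed. Hence $B_{j-1}$ depends on round $j$ through $V_1$, and a union bound over the roughly $\binom{m}{\nu m/100}$ possible choices of $V_1$ is far too expensive. Second, a round of $\nu m$ samples gives a vertex $w$ with $\nu n$ neighbours in $B_{j-1}$ only about $\nu^2 m$ sampled such neighbours. This is much smaller than the permitted $|V_1|\le \nu m/100$ once $\nu<1/100$, so the adversary can delete all of them and your ``$1/50$ fraction'' estimate disappears.

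The missing idea is to run the expansion deterministically in $G$ and use the random set only when extracting the path. Set $R_1(u)=N(u)$ and $R_{i+1}(u)=R_i(u)\cup RN_{\nu,G}(R_i(u))$. These sets depend on neither $V_0$ nor $V_1$, and they grow by $\nu n$ per step while $|R_i(u)|\le(1-\tau)n$, so some $t\le \nu^{-1}$ has $|R_t(u)|>(1-\tau)n$. Moreover, every $z\in R_{i+1}(u)\setminus R_i(u)$ has at least $\nu n$ neighbours in $R_i(u)$.

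Now take a union bound over the at most $n^2\nu^{-1}$ triples $(u,z,i)$, using McDiarmid with failure probability $\exp(-\nu^2m/8)$ each. This gives: whenever $|N(z)\cap R_i(u)|\ge \nu n$, also $|N(z)\cap R_i(u)\cap V_0|\ge \nu m/4$. Given this, for every $V_1$ you build the path backwards from $v$. The vertex $v$ has at least $(\alpha-\tau)n\ge\nu n$ neighbours in $R_t(u)$. Each chosen vertex lies in some layer $R_j(u)\setminus R_{j-1}(u)$ and has a fresh neighbour in $R_{j-1}(u)\cap V_0\setminus V_1$, because $\nu m/4>\nu m/100+\nu^{-1}$. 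The layer index strictly drops, giving length at most $t+1\le \nu^{-1}+1$. Equivalently, this event yields $B_j\supseteq R_j(u)$ by induction, which is exactly the comparison your Step 2 needs.

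Your two-sided Step 3 also overshoots the length bound. From $|B_1|\ge\alpha n$ with increments of $\nu n$, reaching $(1-\tau)n$ may take close to $\nu^{-1}$ steps, not $\nu^{-1}/2$ (e.g.\ $\alpha=\sigma$, $\nu=\sigma^4/K$ in the application). Gluing two such paths therefore gives length about $2\nu^{-1}$. Growing only from $u$ and finishing with the degree of $v$ into the last layer avoids this.
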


    \begin{proof}                                                                                                                             
  For a vertex $u$, define $R_1(u)=N_G(u)$ and $R_{i+1}(u)=R_i(u)\cup RN_{\nu,G}(R_i(u))$.
  Since $G$ is a robust $(\nu,\tau)$-expander with minimum degree at least $\alpha n\ge (\nu+\tau)n$,                                  
  we have $|R_1(u)|\ge\alpha n$ and $|R_{i+1}(u)|\ge|R_i(u)|+\nu n$
  as long as $|R_i(u)|\le(1-\tau)n$.
  So there exists $t\le\nu^{-1}$ with $|R_t(u)|>(1-\tau)n$, and since every vertex
  has degree $\ge\alpha n$ and $|V(G)\setminus R_t(u)|<\tau n$,
  every vertex of $G$ has at least $(\alpha-\tau)n\ge\nu n$ neighbours in $R_t(u)$.
  By construction, every $z\in R_{i+1}(u)\setminus R_i(u)$ satisfies
  $|N(z)\cap R_i(u)|\ge\nu n$.

  We now show that with probability $\ge 1-n^{-1}$, the following holds for
  \emph{all} $u,z\in V(G)$ and $1\le i\le t(u)$:
  \[
  |N(z)\cap R_i(u)|\ge\nu n\;\implies\;|N(z)\cap R_i(u)\cap V_0|\ge\tfrac{\nu m}{4}.\tag{$\star$}
  \]
  Fix a triple $(u,z,i)$ with $|N(z)\cap R_i(u)|\ge\nu n$
  and set $X=|N(z)\cap R_i(u)\cap V_0|$.
  As each vertex of $G$ belongs to $V_0$ with probability
  $1-(1-1/n)^m\ge m/(2n)$ (using $m\le n/100$),
  we have $\mathbb E[X]\ge\nu m/2$.
  Changing a single~$g_j$ alters~$X$ by at most~$1$,
  so McDiarmid's inequality (Lemma~\ref{lemma:mcdiarmid}) gives
  $\mathbb P(X<\nu m/4)\le\exp(-\nu^2 m/8)$.
  A union bound over at most $n^2\nu^{-1}$ triples yields failure probability
  at most $n^2\nu^{-1}\exp(-\nu^2 m/8)\le n^{-1}$,
  using $\nu^2 m\ge 144\log n$ and $\nu^{-1}\le n$.

  Condition on~($\star$) holding, and fix $u,v\in V(G)$ and $V_1\subseteq V_0$
  with $|V_1|\le\beta n$.
  Note that
  \[
  \tfrac{\nu m}{4}>\beta n+\nu^{-1}
  =\tfrac{\nu m}{100}+\nu^{-1},
  \]
  which holds since $\nu^2 m\ge 144$.
  We build a path from $v$ back to $u$ greedily.

  Since $v$ has $\ge\nu n$ neighbours in $R_t(u)$,
  by~($\star$) we can pick
  $w_1\in N(v)\cap R_t(u)\cap V_0\setminus V_1$.
  If $w_1\in R_1(u)=N(u)$, the path $u$-$w_1$-$v$ has length~$2$
  and we are done.
  Otherwise $w_1\in R_{j_1}(u)\setminus R_{j_1-1}(u)$ for some $j_1\ge 2$,
  so $|N(w_1)\cap R_{j_1-1}(u)|\ge\nu n$,
  and we pick $w_2\in N(w_1)\cap R_{j_1-1}(u)\cap V_0\setminus(V_1\cup\{w_1\})$.

  In general, at step~$s$ we pick $w_s$ from
  $N(w_{s-1})\cap R_{j_{s-1}-1}(u)\cap V_0$,
  avoiding $V_1$ and the previously chosen $w_1,\ldots,w_{s-1}$.
  This is always possible:
  by~($\star$), there are $\ge\nu m/4$ candidates in~$V_0$,
  and we remove at most $|V_1|+(s-1)\le\beta n+\nu^{-1}<\nu m/4$ of them.
  The layer index $j_1>j_2>\cdots$ is strictly decreasing,
  so after at most $t\le\nu^{-1}$ steps, some $w_s$ lands in $R_1(u)=N(u)$,
  giving the path
  \[
  u\xrightarrow{} w_s\xrightarrow{}w_{s-1}\xrightarrow{}\cdots\xrightarrow{}w_1\xrightarrow{}v
  \]
  of length $s+1\le\nu^{-1}+1$,
  with all internal vertices in $V_0\setminus V_1$.
  \end{proof}

A similar result holds for $p$-random subsets, where a $p$-random subset of a set $X$ is a random set formed by including each element of $X$ with probability $p$, making all choices independently. 
\begin{lemma}\label{lem:connecting lemma}
 Let $n$ be a sufficiently large  integer, and let $\nu, \tau, \alpha, p \leq 1$ be positive and satisfying $\nu + \tau \leq \alpha$ and $p \nu^2 n \geq 144\log n$. Define $\beta := p\nu/100$. Let $G$ be an $n$-vertex directed graph, and suppose $G$ is a robust $(\nu, \tau)$-out-expander
    with $\delta^\pm(G) \geq \alpha n$. 
    
    If $V_0\subset V(G)$ is a $p$-random subset, then with probability at least $1 - 5n^{-1}$ the following holds. For any two distinct vertices $u,v \in V(G)$, and for any vertex subset $V_1 \subseteq V_0$ of size at most $\beta n$, there exists a directed path of length at most $\nu^{-1} + 1$ from $u$ to $v$ in $G$ whose internal vertices are in $V_0 \setminus V_1$.
\end{lemma}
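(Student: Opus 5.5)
We will follow the proof of Lemma~\ref{lem:connecting:translates} almost verbatim, replacing undirected neighbourhoods by out-/in-neighbourhoods and the $m$ random translates by a $p$-random subset. For a vertex $u$, define the layers of vertices reachable from $u$ by setting $R_1(u)=N^+(u)$ and $R_{i+1}(u)=R_i(u)\cup RN^+_{\nu,G}(R_i(u))$. Here $RN^+_{\nu,G}(R_i(u))$ is the set of vertices with at least $\nu n$ in-neighbours in $R_i(u)$. We have $|R_1(u)|\ge \alpha n\ge \tau n$, and robust out-expansion gives $|R_{i+1}(u)|\ge |R_i(u)|+\nu n$ while $|R_i(u)|\le(1-\tau)n$. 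Hence there is some $t\le \nu^{-1}$ with $|R_t(u)|>(1-\tau)n$. Since $\delta^-(G)\ge\alpha n$, every vertex $v$ then has at least $(\alpha-\tau)n\ge\nu n$ in-neighbours in $R_t(u)$. By construction, every $z\in R_{i+1}(u)\setminus R_i(u)$ has at least $\nu n$ in-neighbours in $R_i(u)$.

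The probabilistic step is to show that, with probability at least $1-5n^{-1}$, the following holds for all $u,z$ and all $i\le t(u)$:
\[
|N^-(z)\cap R_i(u)|\ge\nu n\implies |N^-(z)\cap R_i(u)\cap V_0|\ge p\nu n/2 .
\]
For a fixed triple, the quantity $|N^-(z)\cap R_i(u)\cap V_0|$ is binomial with mean at least $p\nu n$. Chernoff's bound (Lemma~\ref{lemma:chernoff}) with $\eps=1/2$ bounds the failure probability by $2\exp(-p\nu n/12)$. Since $p\nu n\ge p\nu^2 n\ge 144\log n$, this is at most $2n^{-12}$. A union bound over at most $n^2\cdot\nu^{-1}\le n^3$ triples gives total failure probability at most $2n^{-9}\le 5n^{-1}$.

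Now condition on this event and fix distinct $u,v$ and a set $V_1\subseteq V_0$ with $|V_1|\le\beta n=p\nu n/100$. We build the path backwards from $v$. Choose $w_1\in N^-(v)\cap R_t(u)\cap V_0\setminus (V_1\cup\{u\})$. If $w_1\in R_1(u)=N^+(u)$, we stop. Otherwise $w_1$ lies in $R_{j_1}(u)\setminus R_{j_1-1}(u)$ for some $j_1\ge 2$, and we choose $w_2\in N^-(w_1)\cap R_{j_1-1}(u)\cap V_0$ avoiding $V_1$, $u$, $v$ and the previously chosen vertices. We continue in this way. The layer index strictly decreases, so within $t\le\nu^{-1}$ steps we reach $N^+(u)$. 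This gives the directed path $u\to w_s\to\cdots\to w_1\to v$, of length at most $\nu^{-1}+1$, with all internal vertices in $V_0\setminus V_1$.

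The only check needed is that each step has a valid choice. The number of candidates is at least $p\nu n/2$, and the number of forbidden vertices is at most $\beta n+\nu^{-1}+2=p\nu n/100+\nu^{-1}+2$. So it suffices that $p\nu n/2>p\nu n/100+\nu^{-1}+2$. This holds because $p\nu n\ge 144\nu^{-1}\log n$, which is much larger than $\nu^{-1}+2$ when $n$ is large. This counting is the main (but mild) obstacle. Everything else transfers directly from the undirected argument, with the directed robust expansion used in place of the undirected one.
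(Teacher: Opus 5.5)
Your proof is correct and is the adaptation the paper intends: the paper omits this proof, saying it follows that of Lemma~\ref{lem:connecting:translates} with minor modifications, and your layered robust-out-neighbourhood construction, concentration plus union bound, and backwards greedy path match that argument step for step. Using Chernoff instead of McDiarmid is natural, since the count is exactly binomial for a $p$-random set. Your explicit exclusion of $u$ and $v$ from the greedy choices (the $+2$ in your count) also patches a small omission in the paper's version of the argument.
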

We will omit the routine proof of Lemma~\ref{lem:connecting lemma} as it follows the same proof of Lemma~\ref{lem:connecting:translates} with minor modifications (McDiarmid's inequality still applies).

\section{Proof of Theorem~\ref{thm:maintechnical}}\label{sec:main tech}
We are now in position to prove our main technical result (Theorem~\ref{thm:maintechnical}). The proof is split into two cases depending on whether $\mathrm{Cay}_G(S)$ is bipartite or not.
\subsection{Non-bipartite case}

To recap our situation, we have a group $G$  with $n$ elements and a symmetric generating set $S\subset G$ with $|S|=\sigma n$, where $\sigma=n^{-c}$ and $0<c\le 1/200$. Moreover, we assume $\mathrm{Cay}_G(S)$ has no  $\sigma^3/2000$-sparse cuts and so, because of Lemma~\ref{lem:sparse-cut} with parameter $\tau=\sigma/100$, for $K=1600000$ we have that 
\begin{center}
     $\mathrm{Cay}_G(S)$ is a robust $(\sigma^4/K,\sigma/100)$-expander.
\end{center}
Using  Lemma~\ref{lem:connecting lemma} with $p=1$, we may assume we have the following property.\stepcounter{propcounter}
\begin{enumerate}[label=\upshape{\textbf{\Alph{propcounter}}}]
    \item\label{thm:non-bip:connecting} For any two elements $x,y\in G$ and for any $Z\subset G$ with $|Z|\le \frac{\sigma^4n}{100K}$, there is an $(x,y)$-path of length at most $K\sigma^{-4}+1$ in $\mathrm{Cay}_G(S)$ whose internal vertices avoid $Z$.
\end{enumerate}

Let $a_1,\ldots, a_k,b_1,\ldots, b_k\in G$ be fixed and pairwise distinct elements, with $k\leq 2n^{1/200}$. We will go through~\ref{RRS:1}--\ref{RRS:4} to find vertex-disjoint $(a_i,b_i)$-paths covering $G$.\par

\textbf{Step 1. Finding an absorber gadget} \par
Our first step is to construct random-like absorbers by taking random translates of a single absorber. To do so, we first need to find a single absorber and for that the first step is to find a short odd cycle.
\begin{claim}\label{thm:non-bipartite:claim:1}There is an odd cycle $C$ in $\mathrm{Cay}_G(S)$ of length at most $\sigma^{-5}$.\end{claim}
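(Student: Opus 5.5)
Since $\mathrm{Cay}_G(S)$ is non-bipartite and connected, it contains an odd cycle. The task is to make one short, and the tool is the short-path property~\ref{thm:non-bip:connecting}, which gives paths of length at most $K\sigma^{-4}+1$ between any two vertices. The plan is to start from an edge with a parity obstruction and close it up with a short path.

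\textbf{Step 1: reduce to a single vertex pair.} Fix a vertex $x$ and let $d(\cdot,\cdot)$ be graph distance. By~\ref{thm:non-bip:connecting} with $Z=\emptyset$, every vertex lies within distance $D:=K\sigma^{-4}+1$ of $x$. Because the graph is non-bipartite, the BFS layering from $x$ cannot be a proper $2$-colouring. Hence some edge $yz$ has $d(x,y)=d(x,z)$, since in BFS adjacent vertices have distances differing by at most one.

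\textbf{Step 2: build the closed walk.} Take shortest paths $P_y$ from $x$ to $y$ and $P_z$ from $x$ to $z$, both of length $\ell\le D$. Then $P_y$, the edge $yz$, and $P_z$ reversed form a closed walk of odd length $2\ell+1\le 2D+1$.

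\textbf{Step 3: extract an odd cycle.} Any closed walk of odd length contains an odd cycle of at most that length. To see this, split the walk at a repeated vertex into two shorter closed walks; one of them has odd length, so induct. This yields an odd cycle of length at most $2K\sigma^{-4}+3$.

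\textbf{Step 4: compare with $\sigma^{-5}$.} Since $\sigma\ge n^{-1/200}$, $\sigma^{-5}$ need not be large when $\sigma$ is close to $1$. So $2K\sigma^{-4}+3\le\sigma^{-5}$ requires $\sigma\le 1/(2K+3)$, which holds when $\sigma=n^{-c}$ with $n$ large, as in the setup of this section.

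If instead $\sigma$ is bounded below by a constant, I would not use the diameter bound. I would use the sparse-cut property directly: an odd cycle of length $3$ or $5$ should be forced by density and non-bipartiteness in this regime. This density case is the only real obstacle; if necessary I would follow the paper's convention that $\sigma=n^{-c}$ is small.
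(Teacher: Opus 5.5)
Your argument is correct, but it reaches the bound by a different and simpler route than the paper.

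\textbf{The paper's route.} It argues by contradiction from a shortest odd cycle $C$ of length $\ell>\sigma^{-5}$. It takes a subpath $P\subseteq C$ of length $\lceil 2K\sigma^{-4}\rceil$ and uses the avoidance part of property~\ref{thm:non-bip:connecting} with $Z=V(P)$. This gives a shorter $(x,y)$-path $L$ internally disjoint from $P$. The closed walk formed by $L$ and $C\setminus P$ is shorter than the odd girth, hence even. It follows that $L\cup P$ is an odd cycle of length at most $4K\sigma^{-4}<\ell$.

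\textbf{Your route.} You use only the case $Z=\emptyset$, i.e.\ the diameter bound $K\sigma^{-4}+1$. You combine it with the classical fact that a connected non-bipartite graph has odd girth at most $2\,\mathrm{diam}+1$: an edge inside a BFS layer closes an odd walk. This avoids the minimality and parity surgery and gives the slightly better bound $2K\sigma^{-4}+3$. The paper's use of the robust form of the connecting property gains nothing extra here.

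\textbf{About your Step 4.} The paper's proof has the same restriction. Choosing $P$ inside $C$ and the final inequality $4K\sigma^{-4}<\sigma^{-5}$ both need $\sigma<1/(4K)$. So the paper tacitly works with $\sigma=n^{-c}$ small, exactly as you propose.

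Your fallback for constant $\sigma$ should not be pursued as stated, for two reasons.
\begin{itemize}
\item For $\sigma>3^{-1/5}$ the claim is simply false. Take $S=G\setminus\{e\}$: the graph is complete and non-bipartite with no sparse cuts, yet $\sigma^{-5}<3$.
\item For small constant $\sigma$, a cycle of length $3$ or $5$ is not forced. The blow-up $\mathrm{Cay}_{\mathbb Z_{2k+1}\times\mathbb Z_m}(\{\pm 1\}\times\mathbb Z_m)$ of $C_{2k+1}$ has $\sigma=2/(2k+1)$ and no $\sigma^3/2000$-sparse cuts (its spectral gap is of order $m/k^2$). Its odd girth is $2k+1$.
\end{itemize}

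The right fix is different. The later steps only use the length of this cycle up to constant factors, since these are absorbed by the polynomial slack in $n$. Your Steps 1--3 give an odd cycle of length $O(\sigma^{-4})$ for every admissible $\sigma$. So your argument actually repairs the large-$\sigma$ case that the paper's proof glosses over.
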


  \begin{proof}
  Let $C$ be a shortest odd cycle in $\mathrm{Cay}_G(S)$, of length~$\ell$, which exists as we are assuming that $\mathrm{Cay}_G(S)$ is not bipartite.                                                                               
  We may assume that $\ell > \sigma^{-5}$.
  Pick a sub-path $P \subseteq C$ of length
  $\lceil 2K\sigma^{-4}\rceil$,
  with endpoints $x$ and~$y$.
  Since $|V(P)| \leq \sigma^4 n/100K$ for $n$ large,
  property \ref{thm:non-bip:connecting} provides an
  $(x,y)$-path~$L$ of length $|L| \leq K\sigma^{-4}+1 < |P|$,
  whose internal vertices avoid~$V(P)$.

  Since $|L| < |P|$, the closed walk formed by traversing~$L$
  from $x$ to~$y$ and then $C \setminus P$ from~$y$ back to~$x$
  has length $|L| + (\ell - |P|) < \ell$.
  Every closed walk shorter than the odd girth is even
  (any non-simple closed walk splits at a repeated vertex
  into two shorter closed walks, reducing to cycles
  by induction).
  Therefore $|L| + (\ell - |P|)$ is even.
  Now, $|L| + |P|$ differs from this by $2|P| - \ell$,
  which is odd since $\ell$ is odd,
  so $|L| + |P|$ is odd.

  As $L$ and $P$ are internally vertex-disjoint,
  $L \cup P$ is a simple odd cycle of length
  $|L| + |P| \leq 4K\sigma^{-4} < \sigma^{-5} \leq \ell$,
  contradicting the minimality of~$C$.
  \end{proof}
  
Let $C=g_0x_0x_1\ldots x_\ell y_0y_\ell y_{\ell-1}\ldots y_1$ be a labelling of the cycle we found in Claim~\ref{thm:non-bipartite:claim:1}, where  $2\ell\le \sigma^{-5}$. For each $i\in [\ell]$ in turn, use property~\ref{thm:non-bip:connecting} to find an $(x_i,y_i)$-path $P_i$ of length at most $K\sigma^{-4}+1$ and avoiding the paths $P_1,\ldots, P_{i-1}$, $C$ and $\{a_j,b_j:j\in [k]\}$. This is possible as the total number of vertices to avoid at each stage is at most 
\[|V(C)|+2k+\ell\cdot K\sigma^{-4}\le 2K\sigma^{-9}\le \sigma^{4}n/100K,\]
as $c< 1/13$ and $n$ is sufficiently large.
Note that the graph $A=(C-g_0)\cup\bigcup_{i\in [\ell]}P_i$ is a $g_0$-absorber as $A$ is Hamiltonian by construction, has an $(x_0,y_0)$-path not containing $g_0$ and $A+g_0$ has an $(x_0,y_0)$-path containing $g_0$, exactly as in \Cref{fig:1}.

Let $F=V(A)\cup\{g_0\}$, $m=n^{0.6}$, and note that $|F|\le 2K\sigma^{-9}$ and 
\[2\sigma|F|m^{3/2}\le 4K\sigma^{-8}n^{0.9}\le n\sqrt{\log n},\]
as $c< 1/100$. Pick elements $g_1,\ldots, g_m\in G$ uniformly at random and define $R_{g_0}=\{g_ig_0:i\in [m]\}$, $R_{x_0}=\{g_ix_0:i\in [m]\}$, $R_{y_0}=\{g_iy_0:i\in [m]\}$, and $R_{F}=\cup_{i\in [m]}g_iF$. Then, Lemma~\ref{lem:randomtranslates} gives that, with probability at least $1-O(n^{-2})$,
\begin{enumerate}[label=\upshape{(\roman{enumi})}]
\item\label{eq:prob:1} $d(x,R_{a})=\sigma m\pm 2\sqrt{m\log n}$ for each $a\in\{g_0,x_0,y_0\}$, and
\item\label{eq:prob:2} $d(x,R_F)=\sigma m|F|\pm 2|F|\sqrt{m\log n}$.
\end{enumerate}
Let $X$ be the random variable counting the number of overlapping translates, that is, the number of distinct $i,j\in [m]$ such that $g_iF\cap g_jF\not=\emptyset$. As for distinct $i,j\in [m]$ and $x\in G$ we have $\mathbb P(x\in g_iF, x\in g_jF)\le (|F|/n)^2$, it follows that 
\[\mathbb E[X]\le n\binom{m}{2}\left(\frac{|F|}{n}\right)^2=o(\sqrt{m\log n})\]
as $m^{3/2}|F|^2\le 4K^2m^{3/2}\sigma^{-18}\ll n\sqrt{\log n}$ holds for $c\leq  1/200$. Therefore, Markov's inequality implies that with high probability $X\le \sqrt{m\log n}$. For $i\in [m]$, let $Y_i$ be the random variable that takes the value 1 whenever $g_iF\cap \{a_j,b_j:j\in [k]\}\not =\emptyset$, and 0 otherwise. Similarly to Lemma~\ref{lem:randomtranslates}, we have 
\[\mathbb P(Y_i=1)=\frac{|\bigcup_j(a_j\cdot F^{-1}\cup b_j\cdot F^{-1})|}{n}\leq \frac{2k|F|}{n}.\]
Letting $Y=\sum_{i\in [m]}Y_i$, we have
\[\mathbb E[Y]\le \frac{3km|F|}{n}=o(\sqrt{m\log n}),\]
as $km|F|/n\le (2n^{1/200})n^{0.6}(2K\sigma^{-9})/n=4K\sigma^{-9}n^{-0.4+1/200}$ and $c\leq 1/200$. Hence, Markov's inequality implies  that $Y\le \sqrt{m\log n}$ holds with high probability. Let $I\subset [m]$ be a maximal collection subject to $(g_iF)_{i\in I}$ being pairwise disjoint and containing no element from $a_1,\ldots, a_k,b_1,\ldots, b_k$. Let $T_{g_0}=\{g_ig_0:i\in I\}$, $T_{x_0}=\{g_ix_0:i\in I\}$, $T_{y_0}=\{g_iy_0:i\in I\}$, and $T_F=\cup_{i\in I}g_iF$. Then, using \ref{eq:prob:1} and \ref{eq:prob:2} and conditioned on the event that both $X\le \sqrt{m\log n}$ and $Y\le \sqrt{m\log n}$, we have that for every $x\in G$,\stepcounter{propcounter}
\begin{enumerate}[label=\upshape{\textbf{\Alph{propcounter}\arabic{enumi}}}]
   \item\label{translates:degree:1} $d(x,T_a)=\sigma m\pm 10\sqrt{m\log n}$ for each $a\in \{g_0,x_0,y_0\}$
    \item\label{translates:degree:2}  $d(x,T_F)=\sigma m|F|\pm 10|F|\sqrt{m\log n}$,
    \item\label{translates:degree:complement:2} $d(x,G\setminus T_F)=\left(1-\frac{|F|m}{n}\right)\sigma n\pm 20|F|\sqrt{m\log n}$,
\end{enumerate} 
Moreover, by Lemma~\ref{lem:connecting:translates} with parameters $\alpha:=\sigma$, $\nu:=\sigma^4/K$ and $\tau:=\sigma/100$ , with high probability we have the property that
\begin{enumerate}[\textbf{\Alph{propcounter}4}]
\item \label{translates:connecting:2}for any two distinct elements $u,v\in G$ and any set $Z\subset G$ of size at most $\frac{\sigma^4m}{100K}$, there is a $(u,v)$-path of length at most $K\sigma^{-4}+1$ whose internal vertices are in $\{g_ig_0:i\in [m]\}\setminus Z$. 
\end{enumerate}
Fix a choice of $g_1,\ldots, g_m\in G$ so that \ref{translates:degree:1}--\ref{translates:connecting:2} hold and, for each $i\in I$, let $A_i:=g_iA$ and note that $g_iA$ is a $g_ig_0$-absorber with vertex set $g_iV(A)$.

\textbf{Step 2. Partition $G$ into path segments.}\par

Define an auxiliary digraph $D_I$ with vertex set $I$ and an edge from $i$ to $j$ whenever $g_iy_0$ and $g_jx_0$ are adjacent in $\mathrm{Cay}_G(S)$. The crucial observation is that if $ij$ is an edge in $D_I$, then the absorbers $A_i$ and $A_j$ can be chained into a single path through $T_F$, capable of absorbing both $g_ig_0$ and $g_jg_0$.

By~\ref{translates:degree:1}, $D_I$ is $d_I\pm d_I'$-regular for $d_I=\sigma m$ and $d_I'=10\sqrt{m\log n}$. Then, Lemma~\ref{lem:directed:magnant} gives a partition of $V(D_I)$ into at most
\[m_I=O\left(\frac{|I|\log d_I}{d_I}+ \frac{|I|d_I'}{d_I}\right)=O(\sigma^{-1}\sqrt{m\log n})\]
directed paths, where we used that $\log d_I\ll d_I'$. Each directed path in $D_I$ yields a chain of absorbers --- a path through $T_F\setminus T_{g_0}$ --- giving $m_I$ path segments whose union covers $T_F\setminus T_{g_0}$.

Let $H=G\setminus (T_F\cup\{a_j,b_j:j\in [k]\})$. By~\ref{translates:degree:2}, every vertex $x\in H$ satisfies $d(x,T_F)=\sigma m|F|\pm 10|F|\sqrt{m\log n}$, and so $H$ induces a $d_H\pm d_H'$-regular graph with $d_H\ge \sigma n/2$ and $d_H'=O(|F|\sqrt{m\log n})=O(\sigma^{-9}\sqrt{m\log n})$, as $|F|\le 2K\sigma^{-9}
$. Applying Lemma~\ref{lem:directed:magnant}, we partition $H$ into at most 
\[m'=O\left(\frac{n\log d_H}{d_H}+\frac{nd_H'}{d_H}\right)=O\left(\sigma^{-10}\sqrt{m\log n}\right)\]
paths $P_1',\ldots, P_{m'}'$, with endpoints $x_i',y_i'$ for each $i\in [m']$. By possibly splitting some paths, we may assume $m'\ge k$.\par

\textbf{Step 3. Connecting into $k$ spanning paths through $T_{g_0}$.}\par

We merge all segments into $k$ paths, each with prescribed endpoints $a_j,b_j$, using connections through $T_{g_0}$ via~\ref{translates:connecting:2}. For each $j=1,\ldots, k-1$, use~\ref{translates:connecting:2} to connect $a_j$ to $x_j'$ and $y_j'$ to $b_j$, creating $k-1$ paths each containing one $H$-path. For the remaining $m_I+m'-k+1$ segments (all $m_I$ absorber chains and the $m'-k+1$ remaining $H$-paths), chain them into a single path by connecting consecutive endpoints via~\ref{translates:connecting:2}, and connect $a_k$ and $b_k$ to its two ends. In each application of~\ref{translates:connecting:2}, we avoid $\{g_ig_0:i\not\in I\}$ and all vertices previously used for connections. As $\left|\{g_ig_0:i\not\in I\}\right|\le 2\sqrt{m\log n}$ and the total number of connecting vertices is at most $(m_I+m'+k)\cdot K\sigma^{-4}=O(\sigma^{-14}\sqrt{m\log n})$, the avoided set has size 
\[O(\sigma^{-14}\sqrt{m\log n})\le \frac{\sigma^4 m}{100K},\]
as $c\le 1/200$ and $m=n^{0.6}$.  \par
\textbf{Step 4. Incorporating the leftover.}

For $j\in [k-1]$, let $P_j$ be the $(a_j,b_j)$-path just constructed, and let $\tilde{P}$ be the $(a_k,b_k)$-path. The leftover elements are contained in $\{g_ig_0:i\in I\}$. Define $I'\subset I$ as the set of indices $i\in I$ such that none of the paths $P_1,\ldots, P_{k-1},\tilde{P}$ uses $g_ig_0$. For each $i\in I'$, $\tilde{P}$ contains $A_i$ as a segment. Using that $A_i$ is a $g_ig_0$-absorber, we replace $A_i$ with a path $A_i'$ having the same endpoints as $A_i$ but containing $g_ig_0$. Calling the resulting path $P_k$, we have that $P_k$ is an $(a_k,b_k)$-path and every element of $G$ is covered by one of the paths.

\subsection{Bipartite case}\label{sec:bip}

As in the non-bipartite case, we have a group $G$ with $n$ elements and a symmetric generating set $S\subset G$ with $|S|=\sigma n$, where $\sigma=n^{-c}$ and $0<c\le 1/200$. As before, we assume $\mathrm{Cay}_G(S)$ has no  $\sigma^3/2000$-sparse cuts and so, because of Lemma~\ref{lem:sparse-cut}, for $K=1600000$ we have that
$\mathrm{Cay}_G(S)$ is a robust $(\sigma^4/K,\sigma/100)$-expander. Although we could run a similar proof as in the non-bipartite case, with slightly adjusted absorbers, in this case we are able to adopt a more direct approach. 

Recall that our definition of robust expansion means that $|
RN_{\nu,\mathrm{Cay}_G(S)}(U)\setminus U|\geq n\sigma^4/K$ for all subsets $U\subset G$ with $\tau n\leq|U|\leq (1-\tau)n$, where $\tau=\sigma/100$. This expansion is rather weak, but, since our graph is bipartite, in fact a stronger claim holds.

\begin{proposition}\label{lem:bipartite expansion}Let $n,d\in\mathbb N $ and $\nu,\tau,\zeta\in (0,1)$ satisfy $\nu(n+d) < \zeta \tau (1-\tau)n$. Let $H$ be an $n$-vertex balanced $d$-regular bipartite graph with parts $A$ and $B$, with no $\zeta$-sparse cuts. Then, for every set $X\subset A$ of size $\tau n\leq|X|\leq (1/2-\tau)n$, $|RN_{\nu,H}(X)|\geq |X|+\nu n$.\end{proposition}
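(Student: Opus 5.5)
Let $Y:=RN_{\nu,H}(X)$; since $H$ is bipartite, $Y\subseteq B$. The plan is a proof by contradiction. Suppose $|Y|<|X|+\nu n$, and build a cut by hand that violates the no-sparse-cut hypothesis. The natural candidate is $U:=X\cup Y$ with complement $W:=(A\setminus X)\cup(B\setminus Y)$. Both sides are large: $|U|\ge|X|\ge\tau n$, and $|U|\le 2|X|+\nu n\le (1-2\tau)n+\nu n$. Since $\nu<\tau$ (this follows from $\nu n<\zeta\tau(1-\tau)n$), we get $|W|\ge \tau n$. Hence $|U||W|\ge \tau(1-\tau)n^2$, and the hypothesis gives $e(U,W)>\zeta\tau(1-\tau)n^2$.

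To get the contradiction, bound $e(U,W)=e(X,B\setminus Y)+e(Y,A\setminus X)$ from above by something at most $\nu(n+d)n$. The first term is easy: every $b\in B\setminus Y$ has fewer than $\nu n$ neighbours in $X$, so $e(X,B\setminus Y)<\nu n|B|\le \nu n^2$.

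The second term uses $d$-regularity for double counting. We have $e(Y,A\setminus X)=d|Y|-e(X,Y)$, and
\[e(X,Y)=d|X|-e(X,B\setminus Y)> d|X|-\nu n^2.\]
Combining these,
\[e(Y,A\setminus X)< d(|Y|-|X|)+\nu n^2< d\nu n+\nu n^2,\]
where the last step uses the assumption $|Y|<|X|+\nu n$. Therefore
\[e(U,W)<2\nu n^2+\nu dn.\]

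The main obstacle is matching the constant in the hypothesis $\nu(n+d)<\zeta\tau(1-\tau)n$ rather than $2\nu n+\nu d$. To do this, apply the bound on $e(X,B\setminus Y)$ only once: the count of $e(X,B\setminus Y)$ and the $-e(X,B\setminus Y)$ inside $e(X,Y)$ cancel exactly. Indeed,
\[e(U,W)=e(X,B\setminus Y)+d|Y|-d|X|+e(X,B\setminus Y)=d(|Y|-|X|)+2e(X,B\setminus Y),\]
and $e(X,B\setminus Y)<\nu n|B\setminus Y|\le \nu n\cdot n/2$ because $|B|=n/2$ in the balanced case. This gives
\[e(U,W)<d\nu n+\nu n^2=\nu(n+d)n\le \zeta\tau(1-\tau)n^2,\]
contradicting the no-sparse-cut property. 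It remains to check that the size bounds on $U$ and $W$ really give $|U||W|\ge\tau(1-\tau)n^2$; as noted, this follows since both parts have size at least $\tau n$ and their sizes sum to $n$.
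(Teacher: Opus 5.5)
Your argument is correct and is essentially the paper's proof: the paper uses the same cut $X\cup RN_{\nu,H}(X)$ against its complement, the same double-counting identity $e(U,W)=d(|Y|-|X|)+2e(X,B\setminus Y)$, and the same use of $|B|=n/2$ to get $e(U,W)<\nu n(n+d)$. You also check explicitly that $\nu<\tau$, so that $|W|\ge\tau n$, which the paper leaves implicit; the only slip is wording, since the two copies of $e(X,B\setminus Y)$ add rather than cancel, and the improvement over your first estimate comes entirely from bounding $|B\setminus Y|\le n/2$ instead of $\le n$.
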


\begin{proof}

Denote $R:=RN_{\nu,H}(X)$ and note that $R\subset B$. Assume for contradiction that $|R|\leq |X|+\nu n$. Let $Y = B \setminus R$. By the definition of the robust neighbourhood, for every $v \in Y$ we have $d_H(v, X) < \nu n$. Consequently, the number of edges between $X$ and $Y$ satisfies
\[e(X, Y) = \sum_{v \in Y} d_H(v, X) < |Y|\nu n.\] Let $S = X \cup R$ and consider the cut $(S, V(H)\setminus S)$. Note that $V(H) \setminus S = (A \setminus X) \cup Y$. The edges crossing the cut are exactly those between $X$ and $Y$, and those between $R$ and $A \setminus X$. Using the $d$-regularity of $H$, we have $d|R| = e(R, X) + e(R, A \setminus X)$. Similarly, the edges incident to $X$ are $d|X| = e(X, R) + e(X, Y)$.
Substituting $e(R, X) = d|X| - e(X, Y)$ into the first equation yields
\[e(R, A \setminus X) = d|R| - (d|X| - e(X, Y)) = d(|R|-|X|) + e(X, Y),\]
hence the total number of edges crossing the cut is
\[e(S, V(H)\setminus S) = e(X, Y) + e(R, A \setminus X) = 2e(X, Y) + d(|R|-|X|).\]
Using the bound $e(X, Y) < |Y|\nu n$ and the assumption $|R| - |X| \leq \nu n$, we obtain
\[e(S, V(H)\setminus S) < 2|Y|\nu n + d\nu n \leq \nu n^2 + d\nu n = \nu n (n+d),\]
where we used that $|Y| \leq |B| = n/2$. On the other hand, since $H$ has no $\zeta$-sparse cuts, we have $e(S, V(H)\setminus S) \geq \zeta |S| (n - |S|).$ 

As the function $f(x) = x(n-x)$ is minimized at the boundaries of the interval $[\tau n, n - \tau n]$, we therefore have $|S|(n-|S|) \geq \tau(1-\tau)n^2.$ Combining the lower and upper bounds gives
\[\zeta \tau(1-\tau)n^2 \leq e(S, V(H)\setminus S) < \nu n (n+d),\]
thus dividing by $n$ we require $\zeta \tau (1-\tau) n \leq \nu (n+d)$, a contradiction with our assumption.\end{proof}

Applying Proposition~\ref{lem:bipartite expansion}, with parameters $\zeta=\sigma^3/2000$, $\tau= \sigma/100$, $d=\sigma n$, and $\nu =\sigma^4/K$, we get that in $\mathrm{Cay}_G(S)$, every set $X\subset G$ (coming from only one part of the bipartition) of size $\tau n\leq|X|\leq (1/2-\tau)n$ has $|RN_{\nu,\mathrm{Cay}_G(S)}(X)|\geq |X|+\nu n$.

We will use the following result by Lo and Patel~\cite{lo2015hamilton}, which states that sufficiently strong directed robust out-expanders are Hamiltonian (Theorem 1.3 in \cite{lo2015hamilton}, with $\ell=n$). 

\begin{theorem}\label{thm:lopatel}
Let $n\in\mathbb{N}$ and let $\nu,\tau,\gamma\in(0,1)$ satisfy
$4\sqrt[13]{\log^2 n / n}<\nu\le\tau\le\gamma/16<1/16$.
Let $D$ be an $n$-vertex digraph with minimum semidegree
$\delta^0(D)\ge\gamma n$ and such that every $U \subseteq V(D)$ with $\tau n \leq |U| \leq (1-\tau)n$ satisfies
\[|RN^+_{\nu,D}(U)|\ge |U|+\nu n.\]
Then, $D$ is Hamiltonian.
\end{theorem}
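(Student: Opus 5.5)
Theorem~\ref{thm:lopatel} is quoted from Lo and Patel, and in the paper it is used as a black box. If I had to prove it myself, I would avoid regularity, since $\nu$ may be polynomially small, and use a \emph{cycle-factor plus merging} strategy. The key feature is that the hypothesis $|RN^+_{\nu,D}(U)|\ge |U|+\nu n$ does not remove $U$. That makes it a robust Hall condition.

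\emph{Step 1: a cycle factor.} Let $B$ be the bipartite graph with parts $V^-=V(D)$ and $V^+=V(D)$, joining $u^-$ to $v^+$ whenever $uv\in E(D)$. A perfect matching of $B$ is exactly a spanning collection of vertex-disjoint directed cycles in $D$, i.e.\ a $1$-factor. I would verify Hall's condition for $U\subseteq V^-$ in three ranges.
\begin{itemize}
\item If $|U|<\tau n$, use $\delta^+(D)\ge\gamma n>\tau n$.
\item If $\tau n\le |U|\le(1-\tau)n$, use $N^+(U)\supseteq RN^+_{\nu,D}(U)$, which has size at least $|U|+\nu n$.
\item If $|U|>(1-\tau)n$, note that every vertex has in-degree at least $\gamma n>\tau n$, so it has an in-neighbour in $U$. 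Hence $N^+(U)=V$.
\end{itemize}
This gives a $1$-factor $\mathcal C$. The same margin $\nu n$ lets me first delete a small set of vertices (a reservoir) and still find a cycle factor on the remainder.

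\emph{Step 2: merging cycles by shifted walks.} This is the step I expect to be the main obstacle. While $\mathcal C$ has at least two cycles, I want to modify it into a cycle factor with fewer cycles. For a vertex $x$ let $x^+$ be its successor on its cycle in $\mathcal C$. A \emph{shifted walk} repeatedly follows an edge $x\to y$ of $D$ and then jumps to the predecessor $y^-$ of $y$. Replacing the cycle edges $y^-y$ by the walk edges $xy$ in the standard way rearranges $\mathcal C$ into a new cycle factor.

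Starting from a cycle $C_1$, let $U_t$ be the set of vertices reachable by shifted walks of length at most $t$. Robust expansion forces $|U_t|$ to grow by about $\nu n$ per step in the middle range. Within $O(\nu^{-1})$ steps, either a walk closes up in a way that merges $C_1$ with another cycle, or the reachable set covers all but $\tau n$ vertices. In the latter case the semidegree condition supplies the edge that closes the merge. The delicate point is to keep control of the parity of visits to each cycle, so that the rearrangement really reduces the number of cycles rather than splitting one. The slack $\nu n$ in $|RN^+(U)|\ge|U|+\nu n$, as opposed to expansion only outside $U$, is exactly what I would spend here. I would first try the Kühn--Osthus--Treglown bookkeeping: track the cycles visited and argue via the robust neighbourhood of the set of predecessors.

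\emph{Step 3: efficiency and conclusion.} Each merge only touches $O(\nu^{-1})$ edges of $\mathcal C$, so the robust expansion and semidegree properties survive all iterations. Iterating reduces the factor to a single cycle, which is a Hamilton cycle of $D$.

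The assumption $\nu>4\sqrt[13]{\log^2 n/n}$ would only enter through concentration (Chernoff, Lemma~\ref{lemma:chernoff}). This is needed if one wants the reservoir variant above, and to guarantee that $\nu n$ dominates the losses after the $O(\nu^{-1})$ modifications per merge.
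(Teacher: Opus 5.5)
The paper does not prove this statement; it quotes it from Lo and Patel (their Theorem~1.3 with $\ell=n$). So your sketch has to stand on its own. Step~1 is correct: the three-range Hall check is fine. The growth estimate in Step~2, $|U_{t+1}|\ge|RN^+_{\nu,D}(U_t^-)|\ge|U_t|+\nu n$ (with $U_t^-$ the set of predecessors), is the K\"uhn--Osthus--Treglown shifted-walk lemma, and it is where the strong form of expansion enters.

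\textbf{The gap is everything after that estimate.} Regard $\mathcal C$ as the permutation $x\mapsto x^+$. Closing a shifted walk whose departure vertices are $a_1,\dots,a_t$ (so $a_ia_{i+1}^+\in E(D)$, indices mod $t$) gives the $1$-factor in which $a_i$ has successor $a_{i+1}^+$. That is, $x\mapsto x^+$ precomposed with the cyclic permutation $(a_1\,a_2\,\cdots\,a_t)$.
\begin{itemize}
\item If the $a_i$ lie on distinct cycles, the $t$ cycles involved merge into one.
\item As soon as a cycle is revisited, the number of resulting cycles depends on the cyclic order of the $a_i$ along that cycle.
\item Tracking parities cannot tell a merge from a split, because the change in the number of cycles is always $\equiv t-1 \pmod 2$.
\end{itemize}
Your reachable-set argument produces walks that revisit cycles of $\mathcal C$ and gives no control over these positions. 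In K\"uhn--Osthus--Treglown this is harmless, since shifted walks live in a reduced digraph and are turned into a Hamilton cycle via regularity and the blow-up lemma. In $D$ itself it is the whole difficulty. Nor can you simply forbid revisits. The $\nu n$ new vertices supplied by the hypothesis may all lie on cycles already visited, and a single cycle of $\mathcal C$ can contain half of $V(D)$.

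\textbf{This is a real obstruction, not bookkeeping.} Take $n=2m$, make $\{u_1,\dots,u_m\}$ and $\{w_1,\dots,w_m\}$ complete digraphs, and add $u_i\to w_{j+1}$ for $i\le j$ and $w_j\to u_{i+1}$ for $j<i$ (subscripts mod $m$). One checks that $D$ satisfies the hypotheses with, say, $\gamma=0.4$, $\tau=0.025$, $\nu=0.005$ and $n$ large. Let $\mathcal C$ consist of the cycles $u_1\cdots u_m$ and $w_1\cdots w_m$.
\begin{itemize}
\item A closed shifted walk visiting each cycle once would need $u_iw_{j+1},\,w_ju_{i+1}\in E(D)$, i.e.\ $i\le j<i$. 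So every merging switch must revisit a cycle.
\item The departure sequence $(u_1,u_2,w_2,u_3)$ yields the Hamilton cycle $u_1u_3u_2w_3\cdots w_mw_1w_2u_4\cdots u_mu_1$.
\item The sequence $(u_3,u_1,w_2,u_5)$, with the same visit pattern and $t=4$, yields three cycles: $u_2u_3$, $u_4u_5$ and $u_1w_3\cdots w_mw_1w_2u_6\cdots u_m$.
\end{itemize}
Your sketch has no mechanism for locating good configurations like the first. It also gives no proof that one with few departure vertices always exists. Without this, Steps~2--3 do not produce a Hamilton cycle.

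Separately, the concern in Step~3 is misplaced. Switches change $\mathcal C$, not $D$, so the hypotheses never degrade; the only issue is whether a merging switch exists.
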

\par Note that the expansion hypothesis of the above theorem is significantly stronger than robust out-expansion as defined in Section~\ref{sec:robust expansion}. In particular, the hypothesis above can never hold for a bipartite digraph, as can be seen by taking $U$ to be the larger partition class. Also note that Theorem~\ref{thm:lopatel} does not assume any regularity conditions on the underlying digraph\footnote{For weaker notions of expansion that we work with, regularity is crucial to record as Hamiltonicity may fail due to the underlying graph being an unbalanced bipartite graph.}. 
\par Theorem~\ref{thm:lopatel} has the following convenient corollary, giving Hamilton connectivity.
\begin{corollary}\label{cor:lopatel} Let $n\in\mathbb{N}$ be sufficiently large and let $\nu,\tau,\gamma\in(0,1)$ satisfy
$5\sqrt[13]{\log^2 n / n}<\nu/2\le2\tau\le\gamma/32<1/16$.
Let $D$ be an $n$-vertex digraph with minimum semidegree
$\delta^0(D)\ge\gamma n$ and such that every $U \subseteq V(D)$ with $\tau n \leq |U| \leq (1-\tau)n$ satisfies
\[|RN^+_{\nu,D}(U)|\ge |U|+\nu n.\]
Let $u,v$ be arbitrary vertices. Then, $D$ has a Hamilton directed path from $u$ to $v$.
\end{corollary}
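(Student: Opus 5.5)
The plan is to reduce Hamilton connectivity to Hamiltonicity by identifying the two prescribed endpoints. Given distinct $u,v$, form an auxiliary digraph $D'$ on $n-1$ vertices: delete $u$ and $v$ and add a new vertex $w$. The out-neighbours of $w$ are the out-neighbours of $u$ in $D-\{u,v\}$. The in-neighbours of $w$ are the in-neighbours of $v$ in $D-\{u,v\}$. A Hamilton cycle in $D'$ passes through $w$ as $\ldots x\to w\to y\ldots$ with $x\to v$ and $u\to y$ in $D$. Replacing $w$ by the path starting at $u$, going $u\to y$ and around the cycle to $x\to v$, therefore gives a Hamilton directed path from $u$ to $v$ in $D$. (If $n$ is tiny the claim is vacuous; $n$ large is assumed.) It remains to check that $D'$ satisfies the hypotheses of Theorem~\ref{thm:lopatel} with slightly adjusted parameters $\nu'=\nu/2$, $\tau'=2\tau$, $\gamma'=\gamma/2$. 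Then $4\sqrt[13]{\log^2 n'/n'}<\nu'\le\tau'\le\gamma'/16<1/16$ follows from the assumed chain $5\sqrt[13]{\log^2 n/n}<\nu/2\le 2\tau\le \gamma/32$, with $n'=n-1$ and $n$ large.

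\textbf{Semidegree.} Each vertex of $D'$ other than $w$ loses at most $2$ in- and out-neighbours (namely $u,v$). It may gain $w$, which only helps. The vertex $w$ has out-degree at least $\delta^+(D)-2$ and in-degree at least $\delta^-(D)-2$. So $\delta^0(D')\ge \gamma n-2\ge (\gamma/2)n'$.

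\textbf{Expansion.} This is the step needing care, though it is routine. Take $U'\subseteq V(D')$ with $\tau' n'\le |U'|\le (1-\tau')n'$. Let $U=U'\setminus\{w\}$, which is a subset of $V(D)$ with $|U|\ge |U'|-1\ge \tau n$ and $|U|\le (1-\tau)n$, using the slack from $\tau'=2\tau$ and $n$ large. By hypothesis $|RN^+_{\nu,D}(U)|\ge |U|+\nu n$.

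Now compare robust out-neighbourhoods. If $z\in V(D)\setminus\{u,v\}$ has at least $\nu n$ in-neighbours in $U$, then in $D'$ it has at least $\nu n-2\ge \nu' n'$ in-neighbours in $U\setminus\{u,v\}\subseteq U'$. Hence $z\in RN^+_{\nu',D'}(U')$. Thus $|RN^+_{\nu',D'}(U')|\ge |U|+\nu n-2\ge |U'|-1+\nu n-2\ge |U'|+\nu' n'$, since $\nu n/2\ge 3$ for large $n$ (as $\nu\gg n^{-1/13}$).

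Theorem~\ref{thm:lopatel} then yields a Hamilton cycle in $D'$, and undoing the contraction gives the desired Hamilton $u$–$v$ path. The only mild obstacle is the bookkeeping of the additive $O(1)$ losses against $\nu n$ and $\tau n$. This is exactly why the corollary's hypotheses carry the factor-$2$ slack in each parameter.
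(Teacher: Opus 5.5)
Your proposal is correct and is essentially the paper's proof: merge $u$ and $v$ into one vertex carrying the out-arcs of $u$ and the in-arcs of $v$, check that the resulting $(n-1)$-vertex digraph satisfies the hypotheses of Theorem~\ref{thm:lopatel} with parameters $\nu/2$, $2\tau$, $\gamma/2$, and lift the Hamilton cycle back to a Hamilton $u$--$v$ path. Your accounting of the $O(1)$ losses in the size range for $U'$, in the robust out-neighbourhood bound and in the semidegree is, if anything, more explicit than the paper's one-line verification.
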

\begin{proof} Let $D'$ be a directed graph obtained from $D$ by contracting $u$ and $v$ to a vertex, keeping the out-neighbours of $u$ and the in-neighbours of $v$. Note that for every $U\subset V(D')$ with $2\tau |V(D')|\le |U|\le (1-2\tau)|V(D')|$, we have 
\[|RN^+_{\nu/2,D'}(U)|\ge |U|-2+\nu n\ge |U|+\nu|V(D')|/2,\]
provided $n$ is large enough. Then, by Theorem~\ref{thm:lopatel} with parameters $\gamma/2,2\tau$ and $\nu/2$, there is a directed Hamilton cycle in $D'$, which implies $D$ has a Hamilton path from $u$ to $v$.
\end{proof}

Recall that our goal is to cover $G$ with paths from $a_i$ to
$b_i$, for all $i\in[k]$. Denote $R=\{a_1,\ldots,a_k,b_1,\ldots,
b_k\}$ the set of vertices in all pairs. Since $k=O(n^{1/200})$
and $|S|\ge n^{1-c}$, we can fix an element $g'\in S$ such that
$Rg'\cap R=\emptyset$. Notice that $\{(h,hg')\mid h\in H\}$ forms
a perfect matching $M$ in $G$, where each matched edge contains at most
one vertex in $R$.

Let $X\subset[k]$ be the set of indices $i$ for which
$\{a_i,b_i\}\subset H$, and $Y=[k]\setminus X$; hence $|X|=|Y|$.
Denote $B:=\{b_ig'\mid i\in X\}$ the set of vertices matched to
the $b_i$'s in $H$, and similarly $A:=\{a_jg'^{-1}\mid j\in Y\}$.
Notice that the set $R\cup\{a_ig'\mid i\in X\}\cup
\{b_jg'^{-1}\mid j\in Y\}\cup A\cup B$ is a union of edges of $M$;
call its complement in $V(G)$ the \emph{bulk}, and note that $M$
restricts to a perfect matching on the bulk.

Now form a directed graph $D$ from $G$ by contracting each edge
$(h,hg')$ lying entirely in the bulk; that is, the vertices of
$D$ are identified with such edges $e_h=(h,hg')$, the
out-neighbours of $e_h$ are all $e_f$ such that $(hg',f)$ is an
edge in $G$, and the in-neighbours are all $e_f$ such that
$(fg',h)$ is an edge in $G$. Then let $D^\ast$ be obtained from
$D$ by adding the vertices in $A\cup B$: each $b\in B\subset\bar
gH$ gets out-neighbours equal to its $G$-neighbours and no
in-neighbours, and each $a\in A\subset H$ gets in-neighbours
equal to its $G$-neighbours and no out-neighbours. All arcs of
$D^\ast$ come from genuine edges of $G$.

Observe that $|RN^+_{\nu,D^\ast}(U)|\ge
|U|+\nu n/2$ for all $U\subset V(D^\ast)$ with $\tau n\le|U|\le
(1-\tau)|V(D^\ast)|$ (since we only changed edges touching at
most $O(k)=o(n)$ vertices to obtain $D^\ast$ from $D$, and the latter graph has a stronger version of the claimed property with $\nu/2$ replaced with $\nu$ by \Cref{lem:bipartite expansion}). Fix an
arbitrary bijection $\phi:X\to Y$, and use
Lemma~\ref{lem:connecting lemma} with $p=1$ to find, one at a
time, vertex-disjoint directed paths $P_1,\ldots,P_{|X|}$ in
$D^\ast$, where $P_i$ goes from $b_ig'\in B$ to
$a_{\phi(i)}g'^{-1}\in A$ and has length at most
$2\sigma^{-1}\nu^{-1}$. This is possible because the total number
of vertices used is at most
\[
|X|\cdot 2\sigma^{-1}\nu^{-1}=o(\nu n),
\]
as $c\le 1/200$. Each $P_i$ lifts to a path in $G$ from $b_ig'$
to $a_{\phi(i)}g'^{-1}$ whose interior uses only bulk vertices,
and the lifted paths are still vertex-disjoint and avoid $R$.
This finishes the preliminary step; we now connect the pairs
$(a_i,b_i)$ by a similar procedure.

We move to the next auxiliary graph $\bar D$, obtained from $G$
as follows. First, contract each path $P_i$ into a single vertex
$\tilde P_i$: its in-neighbours are the $G$-neighbours of
$a_{\phi(i)}g'^{-1}$ and its out-neighbours are the $G$-neighbours
of $b_ig'$. This absorbs the endpoints $a_{\phi(i)}g'^{-1}\in A$
and $b_ig'\in B$, along with every $M$-edge in the interior of
$P_i$. Then contract every edge $(h,hg')$ of $M$ whose endpoints
are disjoint from $R$ and from every $V(P_i)$, in the same manner
as in $D$. Finally, for each $i\in X$ contract the edge
$(a_i,a_ig')$, and for each $j\in Y$ contract the edge
$(b_jg'^{-1},b_j)$, again in the same manner.

At this point, the $M$-edges of the form $(b_i,b_ig')$ for $i\in X$
and $(a_jg'^{-1},a_j)$ for $j\in Y$ are the only ones left
untouched: in each such edge, one endpoint ($b_ig'\in B$ or
$a_jg'^{-1}\in A$) has been absorbed into a $\tilde P_\ell$,
while the other ($b_i$ or $a_j$) has not. We add these leftover
vertices back as singletons: each $b_i\in H$ ($i\in X$) gets
in-neighbours equal to its $G$-neighbours and no out-neighbours,
and each $a_j\in\bar gH$ ($j\in Y$) gets out-neighbours equal to
its $G$-neighbours and no in-neighbours. Now every vertex of $G$
is represented by exactly one vertex of $\bar D$, and every arc of
$\bar D$ comes from a genuine edge/path of $G$.

Since we only changed edges touching at most $o(\nu n)$ vertices
compared to $D$, $\bar D$ is still an expander with essentially
the same parameters. Also, a directed path in $\bar D$ ending at the
singleton $b_i$ (for $i\in X$) lifts to a path in $G$ ending at
$b_i$, and a directed path starting at the singleton $a_j$ (for
$j\in Y$) lifts to a path in $G$ starting at $a_j$. In particular,
setting
\[
v_i:=e_{a_i}  \text{ and }  u_i:=b_i\ \text{ for }i\in X,\qquad
v_j:=a_j\text{ and } u_j:=e_{b_jg'^{-1}}\ \text{ for }j\in Y,
\]
a directed $v_i\to u_i$ path in $\bar D$ lifts to a path in $G$
from $a_i$ to $b_i$. Using Lemma~\ref{lem:connecting lemma} with
$p=1$, we find vertex-disjoint directed paths from $v_i$ to $u_i$
for $i\in[k-1]$, each of length at most $2\sigma^{-1}\nu^{-1}$.
This is possible as the number of vertices used is at most
\[
(k-1)\cdot 2\sigma^{-1}\nu^{-1}=o(
\nu n/100),
\]
as $c\le 1/200$.

It remains to connect $v_k$ and $u_k$ with a directed path that
spans all remaining vertices. Consider the digraph $\bar D'$ with
all $v_i,u_i$ ($i\in[k-1]$) and the connecting paths found
in-between deleted. By the above inequality and since $\nu\ll
\tau$, $\bar D'$ enjoys essentially the same expansion properties
as $\bar D$, so Corollary~\ref{cor:lopatel} applies to give a
Hamilton directed path from $v_k$ to $u_k$. Combined with the
previous paths, we have a disjoint collection of paths
partitioning $V(\bar D)$ with endpoints $v_i\to u_i$ for each
$i\in[k]$. Lifting each of them back to $G$ gives a spanning
linear forest whose components are paths from $a_i$ to $b_i$ for
each $i\in[k]$, completing the proof.

\begin{remark}
Strictly speaking, applying Lemma~\ref{lem:connecting lemma} and
Corollary~\ref{cor:lopatel} requires a minimum semi-degree
condition which fails at the singleton sources and sinks in
$D^\ast$ and $\bar D$, as these vertices have zero degree on one
side. This is only a technicality: on the missing side, we may
simply declare each such vertex to have every other vertex as a
neighbour. These artificial arcs do not affect the lift, since
each singleton is used only as an endpoint of one of the
constructed paths and so no artificial arc ever appears in the
interior of a path. With this convention the minimum semi-degree
hypothesis is satisfied and the lemmas apply as stated.
\end{remark}

\section*{Acknowledgements}
We are grateful to Shoham Letzter for bringing the reference \cite{lo2015hamilton} to our attention, as it yields a more concise argument for the result in Section~\ref{sec:bip} compared to an earlier version of this manuscript.

\bibliography{bib}
\end{document}